\newtheorem{lemma}{Lemma}
\newtheorem{proposition}{Proposition}
\newtheorem{theorem}{Theorem}
\newtheorem{problem}{Problem}
\newtheorem*{conjecture}{Conjecture}
\theoremstyle{definition}
\newtheorem{definition}{Definition}
\newtheorem{remark}{Remark}
\title[$\varepsilon$-neighborhoods of orbits of parabolic diffeomorphisms]
{$\varepsilon$-neighborhoods of orbits and\\ formal classification of \\ parabolic diffeomorphisms\footnote{This paper was supported by the Franco-Croatian PHC-COGITO project 24710UJ M.}}
\author[Maja Resman]{Maja Resman}
\email{maja.resman@fer.hr}
\begin{document}

\maketitle

\begin{abstract}
In this article we study the dynamics generated by germs of parabolic diffeomorphisms $f:(\mathbb{C},0)\rightarrow (\mathbb{C},0)$ tangent to the identity. We show how formal classification of a given parabolic diffeomorphism can be deduced from the asymptotic development of what we call directed area of the $\varepsilon$-neighborhood of any orbit near the origin. Relevant coefficients and constants in the development have a geometric meaning. They present fractal properties of the orbit, namely its box dimension, Minkowski content and what we call residual content. \end{abstract}

\section{Introduction}\label{sec1}
\subsection{Motivation}
Each germ of a parabolic diffeomorphism in the complex plane,
\begin{equation}\label{diffe}
f(z)=z+a_1z^{k+1}+a_2z^{k+2}+\ldots,\ k\in\mathbb{N},\ a_i\in\mathbb{C},\ a_1\neq 0,
\end{equation} can, by formal changes of variables, be reduced to the standard formal normal form
\begin{equation}\label{fnf1}
f_0(z)=z+z^{k+1}+az^{2k+1},
\end{equation}
for an appropriate choice of $k\in\mathbb{N}$ and $a\in\mathbb{C}$. The formal type of a parabolic diffeomorphism is given by the pair $(k,a),\ k\in\mathbb{N},\ a\in\mathbb{C}$.

The article is motivated by the following problem:
\begin{problem}
Can we recognize a diffeomorphism by looking at one of its orbits?
\end{problem}

More precisely, the idea is to read the formal type of a diffeomorphism from the fractal properties of any orbit.

By fractal properties, we mean box dimension and Minkowski content of the orbit, which by definition are computed from the rate of growth of the area  of $\varepsilon$-neighborhoods of the orbit, as $\varepsilon\to 0$. More generally, we can refer to the asymptotic development of this area as a fractal property of the orbit. Using only the area, we noticed that the answer to the above question is negative. From the asymptotic development of the area of an $\varepsilon$-neighborhood, only information on real part of $a\in\mathbb{C}$ can be obtained. Therefore we first generalize the notion of area of the $\varepsilon$-neighborhood of the orbit to be a complex number whose modulus is the area and whose argument refers to the direction of the orbit in the plane. We call it the \emph{directed area}. We show in the article that formal type can be read from two coefficients and the leading exponent in the asymptotic development of the directed area of the $\varepsilon$-neighborhood of any orbit, as $\varepsilon\to 0$.

In some sense this problem is similar to the famous problem about hearing the shape of a drum, see Section~\ref{sec4}.

Let us comment on applications. We show that the directed area of the $\varepsilon$-neighborhood of any orbit has an asymptotic development in the scale\newline $\{\varepsilon^{1+\frac{1}{k+1}},\varepsilon^{1+\frac{2}{k+1}},\ldots,\varepsilon^{1+\frac{k}{k+1}},\varepsilon^2\log\varepsilon,\ldots \}$. One can study numerically the directed areas of $\varepsilon$-neighborhoods of just one orbit, for small $\varepsilon$. By comparing them to the scale above, one obtains relevant coefficients and concludes the formal normal form of the diffeomorphism.

It is natural to ask now the converse question.
\begin{problem}
If we only know the formal type of a given diffeomorphism, can we uniquely determine box dimension and Minkowski content of its orbits?
\end{problem}
Equivalently, we can ask if \emph{all the diffeomorphisms inside one formal class have the same fractal properties}. It turns out that the box dimension is invariant by the changes of variables inside the formal class. This is not the case for Minkowski content and what we call residual content if we work with general formal changes of variables. However, the problem is solved if we restrict the definition of formal equivalence relation and allow only formal changes of variables that are tangent to the identity. In this sense, each parabolic germ \eqref{diffe} is formally conjugate to a simpler germ
\begin{equation}\label{expfnf}
g_0(z)=z+a_1z^{k+1}+a_1^2 \cdot a \cdot z^{2k+1},
\end{equation}
where $a_1\in\mathbb{C}$ is the first coefficient of the initial diffeomorphism~\eqref{diffe} and $a,\ k$ are as in \eqref{fnf1}. We call \eqref{expfnf} the \emph{extended formal normal form}.
The formal type of a diffeomorphism is thus not completely described by the pair $(k,a)$, but by the triple $(k,a_1,a)$.

Finally, with this restricted notion of formal equivalence, we get a bijective correspondence between the formal type of a diffeomorphism and
the fractal properties of its orbits. We comment on the prospects and problems of analytic classification of diffeomorphisms using fractal properties of their orbits in Section~\ref{sec4}.

\smallskip
In the real case, a similar idea that fractal properties of an orbit near a fixed point, i.e. the rate of growth of its $\varepsilon$-neighborhoods, carry some information on the properties of the generating function itself, was discussed before in e.g. \cite{MaReZu} and \cite{overview}. A bijective correspondence was found between the multiplicity of a fixed point of a function on the real line and the rate of growth of $\varepsilon$-neighborhoods of any orbit.

\medskip

\subsection{Definitions and notations}
Let us recall precisely the main definitions and notations we use in this article.

Let $f:\mathbb{C}\to \mathbb{C}$, $f\in \text{\it Diff}(\mathbb{C},0)$, be a germ of a diffeomorphism fixing the origin. We say that the germ $f$ is \emph{parabolic} if $f'(0)=1$. If $f'(0)=\exp{(2\pi i\lambda)}$, where $\lambda\in\mathbb{Q}$, the diffeomorphism can be reduced to the previous case by considering its higher iterates, but we will not discuss it in this article. Therefore, in the neighborhood of the origin, we suppose that $f(z)$ is of the form
$$
f(z)=z+a_1 z^{k+1}+a_2z^{k+2}+a_3z^{k+3}+o(z^{k+3}),
$$
where $a_1\in \mathbb{C}^*$, $a_i\in\mathbb{C}$, $i=2,3,\ldots$ and $k\in \mathbb{N},\ k\geq 1$.

By $S^{f}(z_0)$, we denote the orbit generated by $f(z)$ with the initial point $z_0$ in the neighborhood of the origin, $S^f(z_0)=\{z_n=f^{\circ n}(z_0)|n\in\mathbb{N}_0\}$. Near the origin, such orbits form the so-called \emph{Leau-Fatou flower}, see e.g. \cite{milnor} or \cite{loray}. In short, there exist $k$ attracting and $k$ repelling sectors, called petals, around equidistant repelling and attracting directions. Attracting and repelling directions are normalized complex numbers $(-a_1)^{-1/k}$,\ $a_1^{-1/k}$ respectively. Orbits are tangent to attracting or repelling directions at the origin, see Figure~\ref{petal}. In the sequel, we suppose that $z_0$ belongs to an attracting sector of the origin. Otherwise, if $z_0$ belongs to a repelling sector, we consider the inverse diffeomorphism $f^{-1}(z)$ instead.

\begin{figure}[htp]
\begin{center}
  \includegraphics[width=3in]{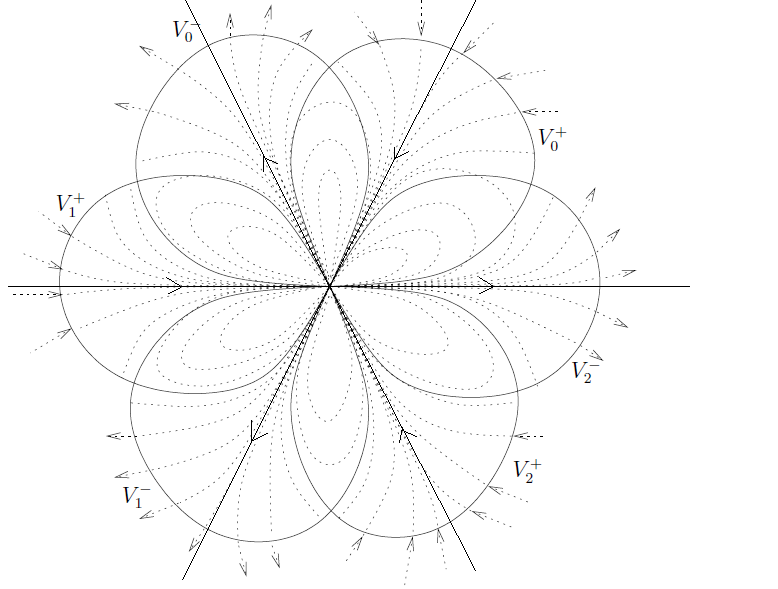}\\
  \caption{\small{Attracting and repelling sectors and directions for e.g. $f(z)=z+z^4+o(z^4)$}.}\label{petal}
  \end{center}
\end{figure}

Now we discuss some properties of measurable sets in the plane. Let $U\subset\mathbb{R}^2$, or $U\subset\mathbb{C}$, be a measurable set whose center of mass is not the origin. By $A(U)$, we denote its area. In Definition~\ref{dir} in Section \ref{sec2}, we define the \emph{directed area} $A^\mathbb{C}(U)$ of $U$ as the complex number which encodes the area of the set $U$, as well as the direction in which the set is placed. Note that the directed area does not verify the finite stability property, that is, $A^\mathbb{C}(U\cup V)=A^\mathbb{C}(U)+A^\mathbb{C}(V)$, for disjoint sets $U$ and $V$. Thus it does not satisfy the properties of a vector measure defined in e.g. \cite{klu}. Moreover, this notion should not be confused with directional $\varepsilon$-neighborhood, also called directional Minkowski sausage, and directional Minkowski content, defined in \cite{tricot}.

The fractal properties of a set $U$ are related to the asymptotic behavior of the area of its $\varepsilon$-neighborhood, denoted $U_\varepsilon$, as $\varepsilon\to 0$. The growth rate of the area $A(U_\varepsilon)$ of the $\varepsilon$-neighborhoods reveals the \emph{density} of the set. It is measured by box dimension and Minkowski content of $U$. We recall the definitions of the Minkowski content and the box dimension of a measurable set $U\subset\mathbb{R}^2$ (or $\mathbb{C}$).

By {\it lower and upper $s$-dimensional  Minkowski content of $U$}, $0\leq s\leq 2$, we mean
$$
{\mathcal M}_{*}^{s} (U)=\liminf_{{\varepsilon}\to0}\frac{A(U_\varepsilon)}{{\varepsilon}^{2-s}}\text{\ \  and\ \ }{\mathcal M}^{*s} (U)=\limsup_{{\varepsilon}\to 0}\frac{A(U_\varepsilon)}{{\varepsilon}^{2-s}}
$$
respectively. Furthermore, \emph{lower and upper box dimension of $U$} are defined by
$$
\underline{\dim}_\text{\it B} U=\inf\{s\geq0\  |\ {\mathcal M}_{*}^s(U)=0\},\ \overline{\dim}_\text{\it B} U=\inf\{s\geq0\  |\ {\mathcal M}^{*s}(U)=0\}.
$$
As functions of $s\in[0,2]$, $\mathcal{M}^{*s}(U)$ and $\mathcal{M}_*^s(U)$ are step functions that jump only once from $+\infty$ to zero as $s$ grows, and upper or lower box dimension are equal to the value of $s$ when jump in upper or lower content appears.

If $\underline{\dim}_B U=\overline{\dim}_\text{\it B} U$, then we put $\dim_\text{\it B}(U)=\underline{\dim}_B U=\overline{\dim}_\text{\it B} U$ and call it the \emph{box dimension of $U$}. In literature, the upper box dimension of $U$ is also referred to as the \emph{limit capacity} of $U$, see \cite{PT}.

If $d=\dim_B(U)$ and, moreover, $\mathcal{M}_{*}^d(U)=\mathcal{M}^{*d}(U)\in(0,\infty)$, we say that the set $U$ is \emph{Minkowski measurable}. In that case, we denote the common value of the Minkowski contents simply by $\mathcal{M}(U)$, and call it the \emph{Minkowski content of $U$}.

In short, if $A(U_\varepsilon)\sim C \varepsilon^{2-s}$, as $\varepsilon\to 0$, for some $s\in[0,2]$, $C>0$, in the sense that $\lim_{\varepsilon\to 0}\frac{A(U_\varepsilon)}{\varepsilon^{2-s}}=C$, then $\dim_{\text{\it B}}(U)=s$ and $U$ is Minkowski measurable with Minkowski content $\mathcal{M}(U)=C$.
For more details on box dimension, see Falconer \cite{falconer} or Tricot \cite{tricot}.

In this article, previous definitions are considered for an orbit of a parabolic diffeomorphism which accumulates at the origin, $U=S^f(z_0)$. The asymptotic behavior of the directed area of its $\varepsilon$-neighborhood, $A^\mathbb{C}(S^f(z_0)_\varepsilon)$, carries information on the density of accumulation, as well as on the direction of approach to the origin.

At the end of this Section, we state Proposition 1.3.1. from \cite{loray} about the formal classification of germs of parabolic diffeomorphisms. We say that two germs of diffeomorphisms, $f,\ g\in \text{\it Diff}(\mathbb{C},0)$, are \emph{formally conjugate} if there exists a formal change of variables $\phi(z)=\sum_{l=1}^{\infty}c_l z^l,\ c_l\in\mathbb{C}$, possibly divergent, which transforms $f$ to $g$, i.e. $g(z)=\phi^{-1}\circ f\circ \phi (z)$.

\begin{proposition}[Formal classification of parabolic diffeomorphisms, Proposition 1.3.1 in \cite{loray}]\label{ch1}
Let \begin{equation*}
f(z)=z+a_1z^{k+1}+a_2 z^{k+2}+\ldots,\quad a_1\in\mathbb{C^*},\ a_i\in \mathbb{C},\ i=2,3,\ldots,\quad k\in\mathbb{N},\ k>1,
\end{equation*} be a germ of a parabolic diffeomorphism. By a formal change of variables, it can be transformed to its formal normal form
\begin{equation}\label{sfnf}
f_0(z)=z+z^{k+1}+az^{2k+1},\ a\in\mathbb{C}.
\end{equation}
\end{proposition}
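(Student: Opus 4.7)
The plan is to construct the conjugation $\phi$ as an infinite composition of elementary formal changes of variables, each designed to annihilate one Taylor coefficient at the origin. The coefficient of the resonant monomial $z^{2k+1}$ will be the sole obstruction and becomes the formal invariant $a$.

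First I would normalize the leading coefficient $a_1$ to $1$ via a linear rescaling $\phi_0(z)=\lambda z$ with $\lambda^k=a_1^{-1}$; a direct calculation yields $\phi_0^{-1}\circ f\circ\phi_0(z)=z+z^{k+1}+b_{k+2}z^{k+2}+b_{k+3}z^{k+3}+\cdots$, so we may henceforth assume $a_1=1$.

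Next I would proceed by induction on $m\geq 2$. Suppose that after the previous steps we have reduced $f$ to $f(z)=z+z^{k+1}+b_{k+m}z^{k+m}+O(z^{k+m+1})$, and apply the elementary change of variables $\phi_m(z)=z+c_m z^m$. A direct expansion of $\phi_m^{-1}\circ f\circ\phi_m$ up to degree $k+m$, or equivalently the standard Lie-bracket identity $[z^{k+1}\partial_z,\,z^m\partial_z]=(m-k-1)z^{k+m}\partial_z$, gives
$$\phi_m^{-1}\circ f\circ\phi_m(z)=z+z^{k+1}+\bigl(b_{k+m}+(k+1-m)c_m\bigr)z^{k+m}+O(z^{k+m+1}).$$
Crucially, all coefficients of degree strictly less than $k+m$ are preserved: the potentially dangerous contribution from $\phi_m^{-1}$ at the intermediate degree $2m-1$ cancels against the corresponding term from $\phi_m$, as one checks by collecting terms up to the relevant order.

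For every $m\neq k+1$ the choice $c_m=-b_{k+m}/(k+1-m)$ annihilates $b_{k+m}$. The sole obstruction appears at the resonant index $m=k+1$, where the factor $k+1-m$ vanishes; consequently no formal change of variables of the above form can alter the coefficient of $z^{2k+1}$, and this coefficient becomes the formal invariant $a$. The formal composition $\phi=\phi_0\circ\phi_2\circ\phi_3\circ\cdots$ (with $m=k+1$ skipped) is well defined in $\mathbb{C}[[z]]$ because $\phi_m\equiv\mathrm{id}\pmod{z^m}$, so on any given Taylor coefficient only finitely many factors act nontrivially. The resulting conjugate is precisely $f_0(z)=z+z^{k+1}+az^{2k+1}$.

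The main technical obstacle is the coefficient bookkeeping in the inductive step, namely verifying that $\phi_m$ acts as an additive shift by $(k+1-m)c_m$ at exactly one degree and as the identity on all lower-degree coefficients, in particular preserving the normalized leading term $z^{k+1}$. Once this Poincar\'e--Dulac--style cancellation is in place, the rest of the argument is purely algorithmic. No convergence issue arises, since only \emph{formal} conjugacy is being claimed.
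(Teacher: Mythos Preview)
The paper does not give its own proof of this proposition: it is quoted verbatim as Proposition~1.3.1 of \cite{loray}, and the only additional remark is that the residual coefficient $a=\mathrm{Res}\big(\frac{1}{f(z)-z},0\big)$ is invariant under formal conjugation, with a reference to \cite{milnor}. So there is nothing in the paper to compare your argument against.

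Your proof is the standard Poincar\'e--Dulac elimination and is correct. The key computation
\[
\phi_m^{-1}\circ f\circ\phi_m(z)=z+z^{k+1}+\bigl(b_{k+m}+(k+1-m)c_m\bigr)z^{k+m}+O(z^{k+m+1})
\]
is right, and your observation that the potentially dangerous $z^{2m-1}$ contribution cancels between $\phi_m$ and $\phi_m^{-1}$ is exactly what makes the induction go through for $m\le k$. Two small points worth tightening. First, once you pass the resonant step $m=k+1$, the inductive hypothesis for $m\ge k+2$ should read $f(z)=z+z^{k+1}+a\,z^{2k+1}+b_{k+m}z^{k+m}+O(z^{k+m+1})$, and you should note explicitly that $\phi_m$ with $m\ge k+2$ leaves the $z^{2k+1}$ coefficient untouched (this follows from your ``identity on lower degrees'' claim since $2k+1<k+m$). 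Second, your sentence ``no formal change of variables of the above form can alter the coefficient of $z^{2k+1}$'' is literally only about $\phi_{k+1}$; the invariance of $a$ under \emph{arbitrary} tangent-to-identity conjugations is a separate (easy) fact, which is what the paper defers to \cite{milnor}. For the existence statement you are proving, what you have is enough.
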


The formal type of a diffeomorphism $f$ is thus given by the pair $(k,\ a)$. In the formal change of variables, the multiplicity $k$ remains unchanged. The final coefficient $a$ is equal to $a=Res(\frac{1}{f(z)-z},0)$, where $Res(\frac{1}{f(z)-z},0)$ remains unchanged in the formal change of variables, for proof see \cite{milnor}. In literature, this invariant residue is called \emph{residual index of fixed point zero} and denoted by $i(f,0)$.

The following remark is important for the sequel. We will see in Section~\ref{sec2} that formal changes of variables which are not tangent to the identity, i.e. which are of the type $\phi(z)=c_1 z+c_2 z^2+o(z^3)$, where $c_1\neq 1$, affect the fractal properties, namely the Minkowski content, of the diffeomorphism. Therefore, if we want the whole formal class of the diffeomorphism $f$, including its formal normal form $f_0$, to have the same fractal properties, we allow only formal changes of variables tangent to the identity, i.e. of the type
$$
\phi(z)=z+c_2z^2+c_3z^3+o(z^3).
$$
It is easy to check that, if we make formal changes of variables tangent to the identity, the first coefficient $a_1$ of the diffeomorphism obviously remains unchanged, so we have the following version of Proposition~\ref{ch1}:
\begin{proposition}\label{ch2}
Let $f(z),\ a,\ k$ be as in Proposition 1.3.1. By formal changes of variables tangent to the identity, $f(z)$ can be transformed to its formal normal form
\begin{equation}\label{exfnf}
g_0(z)=z+a_1z^{k+1}+a_1^2\cdot a\cdot z^{2k+1}.
\end{equation}
\end{proposition}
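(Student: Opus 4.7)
The strategy is to reduce this statement to Proposition~\ref{ch1} by adjusting the conjugacy with a linear rescaling. First, as the paper already remarks, any formal change of variables tangent to the identity preserves the leading coefficient $a_1$: expanding $\phi^{-1}\circ f\circ\phi$ with $\phi(z)=z+c_2z^2+\ldots$ up to order $k+1$ shows that the $z^{k+1}$-coefficient of the conjugate is still $a_1$. Hence any normal form reached by such changes must retain $a_1$ as its $z^{k+1}$-coefficient, so the right-hand ansatz $g_0(z)=z+a_1z^{k+1}+\beta z^{2k+1}$ is the only possibility, and the content of the proposition is to identify $\beta=a_1^2a$.

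Next, I invoke Proposition~\ref{ch1} to obtain a (possibly non-tangent-to-identity) formal change of variables $\phi(z)=c_1z+c_2z^2+\ldots$, with $c_1\in\mathbb{C}^*$, such that $\phi^{-1}\circ f\circ\phi=f_0$, where $f_0(z)=z+z^{k+1}+az^{2k+1}$. A short bookkeeping of the $z^{k+1}$-coefficient in $\phi^{-1}\circ f\circ\phi$ gives the relation $a_1c_1^k=1$, i.e.\ $c_1^k=1/a_1$. The higher coefficients $c_2,c_3,\ldots$ are irrelevant for what follows.

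The key step is to post-compose $\phi$ with the linear rescaling $L(z)=\lambda z$, where $\lambda:=1/c_1$, and set $\Phi:=\phi\circ L$. By construction $\Phi'(0)=c_1\lambda=1$, so $\Phi$ is a formal change of variables tangent to the identity. On the other hand,
\[
\Phi^{-1}\circ f\circ\Phi
=L^{-1}\circ\phi^{-1}\circ f\circ\phi\circ L
=L^{-1}\circ f_0\circ L.
\]
A direct computation gives $L^{-1}\circ f_0\circ L(z)=z+\lambda^kz^{k+1}+a\lambda^{2k}z^{2k+1}$, and since $\lambda^k=c_1^{-k}=a_1$, this equals precisely $g_0(z)=z+a_1z^{k+1}+a_1^2\cdot a\cdot z^{2k+1}$.

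I do not expect any serious obstacle: the proof is a mild refinement of the classical Proposition~\ref{ch1}, and the only subtlety is a choice issue, namely that $1/a_1$ admits $k$ distinct $k$-th roots $c_1$; any one of them works, and the resulting $\Phi$ is tangent to the identity by the normalization $\lambda=1/c_1$. The bulk of the analytic work has already been absorbed into Proposition~\ref{ch1}; this statement is essentially the observation that the remaining ambiguity in the classical normal form—a linear rescaling—can be reabsorbed to upgrade the conjugating map to one tangent to the identity, at the cost of rescaling the coefficients of the normal form by the expected powers of $a_1$.
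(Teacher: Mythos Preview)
Your proof is correct and follows essentially the same idea the paper sketches. The paper does not give a formal proof of this proposition but merely remarks that tangent-to-identity changes preserve $a_1$ and that $f_0$ is obtained from $g_0$ by the linear rescaling $\phi(z)=\lambda z$ with $\lambda^k=a_1$; your argument is precisely this observation run in the other direction, composing the conjugacy from Proposition~\ref{ch1} with the inverse rescaling to produce a tangent-to-identity change of variables.
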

In this case, the formal type of a diffeomorphism $f$ is given by the triple $(k,\ a,\ a_1)$.

Note that $f_0$ is obtained from $g_0$ if we apply one more change of variables $\phi(z)=\lambda z,\ \lambda^k=a_1$, which is not tangent to the identity and which eliminates the coefficient $a_1$. To avoid confusion, in the rest of the article, the formal normal form \eqref{sfnf} will be called the \emph{standard formal normal form} and \eqref{exfnf} the \emph{extended formal normal form}, since it carries more information on the initial diffeomorphism.

Let us note that in both cases only first $k+1$ coefficients of a diffeomorphism contribute to its formal normal forms. All the monomials of order greater than $2k+1$, possibly infinitely many of them, can be eliminated one by one by a formal change of variables, without affecting the former coefficients. Therefore, they are of no importance for formal classification, and in the sequel we can restrict ourselves to the diffeomorphism up to the order $2k+1$, $$f(z)=z+a_1z^{k+1}+a_2z^{k+2}+\ldots+a_{k+1}z^{2k+1}+o(z^{2k+1}).$$
Accordingly, to relate formal normal form of a diffeomorphism with coefficients in the formal asymptotic development of the directed area of the $\varepsilon$-neighborhood of an orbit, it will suffice to study only the first $k+1$ coefficients of the formal asymptotic development.

\section{Main results}\label{sec2}

Let us define the directed area and the directed Minkowski content of a measurable set.

\begin{definition}[directed area of a measurable set]\label{dir}
Let $U\subset\mathbb{C}$ be a measurable set, whose center of mass is not the origin. We define the \emph{directed area of the set $U$}, denoted by $A^\mathbb{C}(U)$, as the complex number
$$
A^{\mathbb{C}}(U)=A(U)\cdot \nu_{\mathbf{t}(U)},
$$
where $A(U)$ denotes the area of $U$, $\mathbf{t}(U)\in\mathbb{C}$ the center of mass of $U$ and $\nu_{\mathbf{t}(U)}=\frac{\mathbf{t}(U)}{|\mathbf{t}(U)|}\in\mathbb{C}$, $|\nu_{\mathbf{t}(U)}|=1$, the normalized center of mass of $U$.
\end{definition}

Minkowski content of a measurable set $U\subset\mathbb{C}$ is by definition in Section~\ref{sec1} equal to the the first coefficient in the asymptotic development of the area of the $\varepsilon$-neighborhood of U. We define \emph{directed Minkowski content} analogously, but using the directed area of the $\varepsilon$-neighborhood, thus taking into account the position of the set in the plane.

Let $U_\varepsilon$ denote the $\varepsilon$-neighborhood of $U$.
\begin{definition}[Directed Minkowski content of a measurable set] \label{ccontent}
Let $U\subset\mathbb{C}$ be a measurable set, such that its center of mass is not the origin. Let $d=\dim_{\text{\it B}}U$. We define the \emph{directed Minkowski content of $U$}, denoted by $\mathcal{M}^{\mathbb{C}}(U)$, as the complex number
\begin{equation*}
\mathcal{M}^{\mathbb{C}}(U)=\lim_{\varepsilon\to 0}\frac{A^{\mathbb{C}}(U_\varepsilon)}{\varepsilon^{2-d}}.
\end{equation*}
\end{definition}

Note that, by definition, $|\mathcal{M}^{\mathbb{C}}(U)|=\mathcal{M}(U)$, where $\mathcal{M}(U)$ is the Minkowski content of $U$.

\medskip

Let $f:\mathbb{C}\to\mathbb{C}$ be a germ of a parabolic diffeomorphism:
$$f(z)=z+a_1z^{k+1}+o(z^{k+1}),\ \ \ a_1\in\mathbb{C}^*,\ k\in\mathbb{N}.$$
Let the initial point $z_0$ lie in an attracting sector of the origin. We denote the attracting orbit of $f$ with initial point $z_0$ by $S^f(z_0)$.

In Theorem~\ref{asy} at the end of this Section, we show that the directed area of the $\varepsilon$-neighborhood of the orbit $S^f(z_0)$ has an asymptotic development of the form:
\begin{equation}\label{asydevel}
\begin{split}
&A^{\mathbb{C}}(S^f(z_0)_\varepsilon)=K_1 \varepsilon^{1+\frac{1}{k+1}}+K_2 \varepsilon^{1+\frac{2}{k+1}}+\ldots+K_{k-1}\varepsilon^{1+\frac{k-1}{k+1}}+   \\
&\ \ \ +K_{k}\varepsilon^{1+\frac{k}{k+1}}\log\varepsilon+S\varepsilon^{1+\frac{k}{k+1}}+K_{k+1} \varepsilon^2\log\varepsilon+o(\varepsilon^2\log\varepsilon),\ \varepsilon\to 0.
\end{split}
\end{equation}
Here, coefficients $K_i$, $i=1,\ldots,k+1,$ and $S$ are complex numbers. For the precise statement and properties of coefficients $K_i$ and $S$, namely their dependence on the coefficients of the diffeomorphism and on the initial point, see Theorem~\ref{asy} at the end of this Section.
\medskip

From the development \eqref{asydevel}, it holds that $$A(S^f(z_0)_\varepsilon)=|A^{\mathbb{C}}(S^f(z_0)_\varepsilon)|=|K_1|\varepsilon^{1+\frac{1}{k+1}}+o(\varepsilon^{1+\frac{1}{k+1}}),\ \varepsilon\to 0.$$ Therefore we have that any orbit $S^f(z_0)$ is Minkowski measurable, with:
\begin{equation}\label{fra1}\dim_\text{\it B}(S^f(z_0))=1-\frac{1}{k+1},\quad \mathcal{M}(S^f(z_0))=|K_0|, \quad \mathcal{M}^{\mathbb{C}}(S^f(z_0))=K_1.\end{equation}

Motivated by the fact that the first coefficient of \eqref{asydevel} incorporates directed\linebreak Minkowski content of the orbit, we define the \emph{directed residual content} of the orbit as the coefficient in front of the logarithmic term, $\varepsilon^2\log\varepsilon$.
\begin{definition}[Directed residual content]
We define the \emph{directed residual content} $\mathcal{R}^{\mathbb{C}}(S^f(z_0))$ of the orbit $S^f(z_0)$ as the complex number
\begin{equation}\label{fra2}
\mathcal{R}^{\mathbb{C}}(S^f(z_0))=K_{k+1},
\end{equation}
where $K_{k+1}$ is the coefficient in front of the logarithmic term $\varepsilon^2\log\varepsilon$ in the development \eqref{asydevel}.
\end{definition}

\medskip
Now we state the two main results of the article. First, the standard formal normal form, $f(z)=z+z^{k+1}+az^{2k+1}$, of a given parabolic diffeomorphism can be deduced from fractal properties of one of its orbits near the origin.
\begin{theorem}[Standard formal normal form and fractal properties of an orbit]\label{fnf}\
The standard formal type $(k,\ a)$ of a parabolic diffeomorphism $f(z)$ is uniquely determined by $dim_B(S^f(z_0))$, $\mathcal{M}^\mathbb{C}(S^f(z_0))$ and $\mathcal{R}^\mathbb{C}(S^f(z_0))$ of any attracting orbit $S^f(z_0)$ near the origin.

\noindent Moreover, the following explicit formulas hold:
\begin{equation}\label{form}
\begin{split}
k\ =\ &\frac{\dim_B(S^f(z_0))}{1-\dim_B(S^f(z_0))},\\
a\ =\ &\frac{k+1}{2}-\frac{k+1}{\pi}\cdot Re\left(\frac{\mathcal{R}^\mathbb{C}(S^f(z_0))}{\nu_{\mathcal{M}^\mathbb{C}(S^f(z_0))}}\right)+\\
&\qquad +i\cdot \phi(k)\cdot\mathcal{M}(S^f(z_0))\cdot Im\left(\frac{\mathcal{R}^\mathbb{C}(S^f(z_0))}{\nu_{\mathcal{M}^\mathbb{C}(S^f(z_0))}}\right),
\end{split}
\end{equation}
where $\nu_{\mathcal{M}^\mathbb{C}(S^f(z_0))}$ is the normalized directed Minkowski content and $\phi(k)$ is a function of $k$, explicitly given by
$$
\phi(k)=\frac{k(k+1)}{k-1}\cdot \frac{1}{\sqrt\pi}\cdot\frac{\frac{\Gamma(\frac{1}{k+1})}{\Gamma(\frac{3}{2}+\frac{1}{k+1})}+\sqrt{\pi}}{\frac{\Gamma(\frac{1}{2}+\frac{1}{2k+2})}{\Gamma(2+\frac{1}{2k+2})}-\sqrt{\pi}}\cdot\frac{\Gamma(1 + \frac{1}{2k+2})}{\Gamma(\frac{3}{2}+\frac{1}{2k+2})}.
$$
Here, $\Gamma$ denotes the gamma function.
\end{theorem}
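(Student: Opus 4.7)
The first formula is immediate. By Theorem~\ref{asy} and \eqref{fra1}, any attracting orbit satisfies $\dim_B(S^f(z_0)) = 1 - \tfrac{1}{k+1} = \tfrac{k}{k+1}$. Solving this single relation for $k$ produces the stated formula $k = \dim_B(S^f(z_0))/(1-\dim_B(S^f(z_0)))$, so the multiplicity $k$ is determined by the box dimension alone.

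For the formula for $a$, the plan is to invert the relations between the coefficients $K_1$ and $K_{k+1}$ of the expansion \eqref{asydevel} and the formal invariants $a_1$, $a$. By \eqref{fra1}--\eqref{fra2}, $\mathcal{M}^\mathbb{C}(S^f(z_0)) = K_1$ and $\mathcal{R}^\mathbb{C}(S^f(z_0)) = K_{k+1}$. Since $K_1$ is the leading term of the directed area, its argument records the direction along which the orbit accumulates at the origin --- by the Leau--Fatou description, an attracting direction proportional to $(-a_1)^{-1/k}$. Hence $\nu_{\mathcal{M}^\mathbb{C}(S^f(z_0))}$ is a unit-modulus number depending only on $a_1$ and the choice of petal, and the ratio $\mathcal{R}^\mathbb{C}/\nu_{\mathcal{M}^\mathbb{C}} = K_{k+1}/\nu_{K_1}$ cancels this phase, leaving a complex number depending only on $k$, $a$, and $\mathcal{M} = |K_1|$. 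Using the closed forms of $K_1$ and $K_{k+1}$ supplied by Theorem~\ref{asy}, I would then verify affine relations of the form
\[
Re\!\left(\frac{\mathcal{R}^\mathbb{C}(S^f(z_0))}{\nu_{\mathcal{M}^\mathbb{C}(S^f(z_0))}}\right) = \frac{\pi}{k+1}\!\left(\frac{k+1}{2} - Re(a)\right), \qquad Im\!\left(\frac{\mathcal{R}^\mathbb{C}(S^f(z_0))}{\nu_{\mathcal{M}^\mathbb{C}(S^f(z_0))}}\right) = \frac{Im(a)}{\phi(k)\,\mathcal{M}(S^f(z_0))},
\]
and then solve each scalar identity for $Re(a)$ and $Im(a)$ to recover the formula displayed in the theorem. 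Uniqueness of $a$ is immediate from this explicit inversion.

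The main obstacle is arranging this clean decoupling. In \eqref{asydevel}, both $K_1$ and $K_{k+1}$ are a priori non-trivial functions of $a_1$, $a$, and of the initial point $z_0$, obtained by summing areas of $\varepsilon$-tubes along the orbit. What must be shown, after dividing by $\nu_{K_1}$, is (i) independence of the ratio from the choice of $z_0$ within the petal, and (ii) the separation of $Re(a)$ and $Im(a)$ into the real and imaginary parts respectively, with precisely the gamma-function proportionality constant $\phi(k)$. Both facts require careful bookkeeping of the gamma integrals that arise from the tangential decay of the orbit toward the attracting direction; those same integrals combine into the rational function of gamma values defining $\phi(k)$, and the mechanism producing $\phi(k)$ is essentially a quotient of two such gamma expressions, one coming from $K_{k+1}$ and one from the normalization $\nu_{K_1}\cdot\mathcal{M}$.
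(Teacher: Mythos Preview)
Your overall strategy---recover $k$ from the box dimension, then invert the formulas for $K_1$ and $K_{k+1}$ from Theorem~\ref{asy}---is the paper's approach, but there is a genuine gap in how you propose to carry out the inversion.

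The closed form of $K_{k+1}$ in Theorem~\ref{asy} is expressed in terms of $a_{k+1}/a_1^{2}$ together with an unspecified correction $g(k,A,a_2,\ldots,a_k)$ depending on the intermediate coefficients $a_2,\ldots,a_k$ of $f$. It is \emph{not} expressed in terms of the formal invariant $a$. In general $a\neq a_{k+1}/a_1^{2}$: the residual index $a=\mathrm{Res}\big(1/(f(z)-z),0\big)$ is a polynomial in $a_1^{-1},a_2,\ldots,a_{k+1}$ and collapses to $a_{k+1}/a_1^{2}$ only when $a_2=\cdots=a_k=0$. So the affine relations you write down, with $a$ appearing on the right-hand side, do not follow directly from the formulas \eqref{impcoef}; as stated they would recover $a_{k+1}/a_1^{2}$ plus an unknown contribution from $g$, not the invariant $a$.

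The missing ingredient, which the paper supplies separately as Lemma~\ref{inv}, is that $K_1$ and $K_{k+1}$ are invariant under formal changes of variables tangent to the identity. Granting this, one may replace $f$ by its extended normal form $g_0(z)=z+a_1z^{k+1}+a_1^{2}a\,z^{2k+1}$ without altering $\mathcal{M}^{\mathbb C}$ or $\mathcal{R}^{\mathbb C}$; for $g_0$ the intermediate coefficients vanish, so $g(k,A,0,\ldots,0)=0$ and $a_{k+1}/a_1^{2}=a$, and \emph{then} your inversion goes through verbatim to yield \eqref{form}. This invariance is not a consequence of Theorem~\ref{asy} and requires its own argument (tracking how the first and $(k{+}1)$-st coefficients in the expansion of $z_n$ behave under each elementary change $\phi_l(z)=z+cz^l$).

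A minor point: independence of $K_1$ and $K_{k+1}$ from the initial point $z_0$ within a petal is already part of the statement of Theorem~\ref{asy}, so that is not an obstacle you need to overcome separately.
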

As we have discussed earlier in Section~\ref{sec1}, the converse of Theorem~\ref{fnf} is not true. Nevertheless, if we consider the extended formal normal form, $f(z)=z+a_1z^{k+1}+a_1^2\cdot a \cdot z^{2k+1}$, instead of standard formal normal form, Theorem~\ref{fnf} takes the form of the stronger equivalence statement:
\begin{theorem}[Extended formal normal form and fractal properties of an orbit]\label{fnfe}
There exists a bijective correspondence between the following triples:
\begin{itemize}
\item[(i)] the extended formal type of a diffeomorphism, $(k,\ a_1,\ a),$
\item[(ii)] $\big(\dim_\text{\it B}(S^f(z_0)),\ \mathcal{M}^\mathbb{C}(S^f(z_0)),\ \mathcal{R}^\mathbb{C}(S^f(z_0))\big),$
\end{itemize}
where $S^f(z_0)$ is any attracting orbit of a diffeomorphism.
The bijective correspondence is given by formulas \eqref{form} and the following formula for $a_1$:
\begin{equation}\label{a1}
a_1=\mathcal{M}^\mathbb{C}(S^f(z_0))^{-k}\cdot \frac{(-2)^{-k}}{\mathcal{M}(S^f(z_0))}\cdot \Big(\frac{k}{\sqrt \pi}\frac{\Gamma(\frac{3}{2}+\frac{1}{2k+2})}{\Gamma(\frac{1}{2k+2})}\Big)^{-(k+1)}.
\end{equation}
\end{theorem}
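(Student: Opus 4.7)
The plan is to derive Theorem~\ref{fnfe} as an enhancement of Theorem~\ref{fnf}, using the explicit asymptotic development \eqref{asydevel} furnished by Theorem~\ref{asy}. Since formulas \eqref{form} for $k$ and $a$ are already provided by Theorem~\ref{fnf} and depend only on $\dim_\text{\it B}(S^f(z_0))$, $\mathcal{M}^\mathbb{C}(S^f(z_0))$ and $\mathcal{R}^\mathbb{C}(S^f(z_0))$, the only new content to be established is formula \eqref{a1} which recovers $a_1$ from the directed Minkowski content. Once this formula is in place, the map (i)$\to$(ii) is well-defined by Theorem~\ref{asy}, the map (ii)$\to$(i) is given by \eqref{form} together with \eqref{a1}, and the two are visibly inverse to each other, producing the asserted bijection.

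To derive \eqref{a1}, I would first express the leading coefficient $K_1=\mathcal{M}^\mathbb{C}(S^f(z_0))$ of \eqref{asydevel} explicitly in terms of $(k,a_1)$. The Fatou chart $w=-1/(ka_1 z^k)$ linearizes $f$ up to a bounded perturbation and produces the orbit asymptotics $z_n\sim(-ka_1 n)^{-1/k}$, which governs both direction and density of accumulation at the origin. On the one hand, the argument of $z_n$ tends to that of the attracting direction $(-a_1)^{-1/k}$ in whose sector $z_0$ lies, and that argument coincides with the argument of $\nu_{\mathcal{M}^\mathbb{C}(S^f(z_0))}$; consequently the $k$-th power $\mathcal{M}^\mathbb{C}(S^f(z_0))^{-k}$ encodes the normalized quantity $-a_1/|a_1|$ independently of the choice of attracting sector. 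On the other hand, the tail linear density is $|z_{n+1}-z_n|^{-1}\sim \frac{1}{k|a_1|}|z_n|^{-(k+1)}$, and the standard Minkowski-sausage estimate around such a one-dimensional sequence yields $|\mathcal{M}^\mathbb{C}(S^f(z_0))|=\mathcal{M}(S^f(z_0))$ as a Beta/Gamma integral proportional to $|a_1|^{-1/(k+1)}$, with the exact constant $\frac{k}{\sqrt\pi}\,\Gamma(\frac{3}{2}+\frac{1}{2k+2})/\Gamma(\frac{1}{2k+2})$ appearing in \eqref{a1}. Combining modulus and argument and solving for $a_1$ produces the stated formula.

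Well-definedness of the correspondence on formal classes rests on the fact, already noted in Section~\ref{sec1}, that the coefficients $K_1,\ldots,K_{k+1}$ of \eqref{asydevel} depend only on the first $k+1$ coefficients of $f$, which in turn determine the extended formal type $(k,a_1,a)$ via Proposition~\ref{ch2}; conversely, formal changes of variables tangent to the identity act only on the coefficients of $f$ of order beyond $2k+1$, and by Theorem~\ref{asy} these do not affect the first $k+1$ coefficients of the directed-area expansion. The main obstacle is the precise computation of the constant in $|K_1|$: one must partition the $\varepsilon$-neighborhood into an inner nucleus (where the disks of radius $\varepsilon$ around consecutive orbit points overlap) and an outer tail (where they are disjoint), pass through the Fatou chart to convert the discrete orbit sum into a Riemann integral in $|z|^{-(k+1)}$, and match the resulting $\Gamma$-function expression with the specific constant appearing in \eqref{a1}. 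Once this computation is executed with the normalizations inherited from Theorem~\ref{asy}, the remaining assertions of Theorem~\ref{fnfe} follow formally.
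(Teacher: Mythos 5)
Your plan for recovering $a_1$ from the modulus and direction of $K_1=\mathcal{M}^\mathbb{C}(S^f(z_0))$ via \eqref{impcoef}, and your identification of the nucleus/tail decomposition and Fatou-chart asymptotics as the computational engine behind Theorem~\ref{asy}, are consistent with what the paper does. However, there is a real gap in the well-definedness argument, and a secondary ordering issue.

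The gap: you assert that ``formal changes of variables tangent to the identity act only on the coefficients of $f$ of order beyond $2k+1$, and by Theorem~\ref{asy} these do not affect the first $k+1$ coefficients of the directed-area expansion.'' This is false. A conjugation by $\phi_l(z)=z+cz^l$ with $2\le l\le k+1$ does change the intermediate coefficients $a_2,\dots,a_{k+1}$ of $f$; this is precisely how Proposition~\ref{ch2} eliminates the monomials of order $k+2,\dots,2k$ to reach the extended normal form $g_0$. Since Theorem~\ref{asy} makes $K_{k+1}$ an explicit function of $a_2,\dots,a_{k+1}$ (through the term $g(k,A,a_2,\dots,a_k)$ as well as through $a_{k+1}/a_1^2$), it is \emph{not} a priori clear that $K_{k+1}$ is unchanged when one conjugates $f$ down to $g_0$: the intermediate coefficients all move. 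The paper resolves this with Lemma~\ref{inv} (via Lemma~\ref{auxinv}), whose substance is that under $\phi_l$, $l\ge 2$, the first and $(k+1)$-st coefficients in the asymptotics of $z_n$ are unchanged, and that $K_1,K_{k+1}$ depend only on these two coefficients of $z_n$, so that the various changes in $a_2,\dots,a_{k+1}$ cancel exactly in $K_{k+1}$. Without this invariance argument one cannot pass from the general expression \eqref{impcoef} for $K_{k+1}^f$ to the clean formula \eqref{impcoeffnf} for $K_{k+1}^{g_0}$ and hence cannot close the bijection.

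A secondary issue: you frame the proof as an enhancement of Theorem~\ref{fnf}, taking formulas \eqref{form} as already established. In the paper the logical order is reversed: Theorem~\ref{fnfe} is proved first (from Theorem~\ref{asy} and Lemma~\ref{inv}), and Theorem~\ref{fnf} is then derived from it by applying one extra, non-tangent-to-identity rescaling $\phi(z)=a_1^{-1/k}z$. If you rely on Theorem~\ref{fnf} for \eqref{form}, you must supply an independent proof of it; otherwise the argument is circular. Supplying the invariance lemma fixes both problems at once, since \eqref{form} and \eqref{a1} are both read off from \eqref{impcoeffnf} after one knows $K_1^f=K_1^{g_0}$ and $K_{k+1}^f=K_{k+1}^{g_0}$.
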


The converse states that all the attracting orbits of all the diffeomorphisms of the same extended formal type share the same fractal properties.

Actually, for the precise converse statement, we have to make the following remark about the sectorial dependence of fractal properties on the initial point of the orbit. Suppose that we know only the extended formal type of a diffeomorphism and we want to compute the directed Minkowski content and the directed residual content of any attracting orbit $S^f(z_0)$ of the diffeomorphism. The directed Minkowski content is given by reformulation of the formula \eqref{a1}:
$$
\mathcal{M}^\mathbb{C}(S^f(z_0))=\frac{k+1}{k}\cdot\sqrt\pi\cdot\frac{\Gamma(1 + \frac{1}{2k+2})}{\Gamma(\frac{3}{2}+\frac{1}{2k+2})}\bigg(\frac{2}{|a_1|}\bigg)^{1/(k+1)}\cdot \nu_A,
$$
where $\nu_A$ is the attracting direction in whose attracting sector $z_0$ lies. Therefore, the fractal properties do differ slightly in argument for the orbits in $k$ different attracting sectors, but they do not differ for the orbits inside one attracting sector. Their modules, in particular the Minkowski content, are the same in all sectors.

\begin{remark}
By \eqref{fra1} and \eqref{fra2}, we can as well express the correspondence in Theorem~\ref{fnf} and Theorem~\ref{fnfe} in terms of coefficients $K_1$, $K_{k+1}$ and the exponent $k$ in the asymptotic development of $A^\mathbb{C}(S^f(z_0)_\varepsilon)$, instead in terms of fractal properties of the orbit $S^f(z_0)$.
\end{remark}

\medskip
At the end of the Section, we state the auxiliary theorem which gives a more precise description of the coefficients in the development \eqref{asydevel}. This theorem is an important part of the proof of Theorem~\ref{fnf} and Theorem~\ref{fnfe}.

Let $f(z)=z+a_1z^{k+1}+a_2z^{k+2}+o(z^{k+2}),\ a_1\in \mathbb{C}^*$, be a parabolic diffeomorphism and let $S^f(z_0)$ be its orbit with initial point $z_0$ lying in one of $k$ attracting sectors. Let
$$
A=(-ka_1)^{-\frac{1}{k}}
$$
be the one of $k$ attracting directions in whose attracting sector the initial condition $z_0$ lies. In other words, we chose the $\frac{1}{k}-$th complex root of $-\frac{1}{a_1}$  whose argument is closest to $z_0$. By $\nu_A$, we denote the normalized complex number $A$.
\begin{theorem}[Asymptotic development of the directed area of $\varepsilon$-neighborhoods of orbits]\label{asy}
The directed area of $\varepsilon$-neighborhood of an orbit $S^f(z_0)$ has the following asymptotic development:
\begin{equation}\label{comarea}
\begin{split}
&A^{\mathbb{C}}(S^f(z_0)_\varepsilon)=K_1 \varepsilon^{1+\frac{1}{k+1}}+K_2 \varepsilon^{1+\frac{2}{k+1}}+\ldots+K_{k-1}\varepsilon^{1+\frac{k-1}{k+1}}+  \\
&\ \ \ +K_{k}\varepsilon^{1+\frac{k}{k+1}}\log\varepsilon+S(f,z_0)\varepsilon^{1+\frac{k}{k+1}}+K_{k+1} \varepsilon^2\log\varepsilon+o(\varepsilon^2\log\varepsilon),\ \varepsilon\to 0.
\end{split}
\end{equation}
All coefficients $K_i$, $i=1,\ldots,k+1,$ are complex-valued functions which depend only on $k$, $A$ and on the first $i$ coefficients $a_2,\ \ldots,\ a_i$ of the diffeomorphism. The coefficient $S(f,z_0)$ is a complex-valued function which depends on the whole diffeomorphism $f(z)$ and on the initial condition $z_0$.

Furthermore, `important' coefficients $K_1$ and $K_{k+1}$ are of the form:
\begin{equation}\label{impcoef}
\begin{split}
&K_1=\frac{k+1}{k}\cdot\sqrt\pi\cdot\frac{\Gamma(1 + \frac{1}{2k+2})}{\Gamma(\frac{3}{2}+\frac{1}{2k+2})}\bigg(\frac{2}{|a_1|}\bigg)^{1/(k+1)}\cdot \nu_A,\\
&K_{k+1}=\nu_A\cdot\Bigg[-\frac{\pi}{k+1}Re\big(\frac{a_{k+1}}{a_1^2}-\frac{k+1}{2}\big)+\Bigg.\\
&\qquad \Bigg. \bigg(\frac{2(k-1)}{k+1}\Big(\frac{|a_1|}{2}\Big)^{1/(k+1)}\frac{\frac{\Gamma(\frac{1}{2}+\frac{1}{2k+2})}{\Gamma(2+\frac{1}{2k+2})}-\sqrt{\pi}}{\frac{\Gamma(\frac{1}{k+1})}{\Gamma(\frac{3}{2}+\frac{1}{k+1})}+\sqrt{\pi}}\bigg)\cdot i\cdot Im\big(\frac{a_{k+1}}{a_1^2}-\frac{k+1}{2}\big) \Bigg]+\\
&\hspace{9cm} +g(k,A,a_2,\ldots,a_{k}).
\end{split}
\end{equation}
Here, $g(k,A,a_2,\ldots,a_k)$ is a complex-valued function with the property\linebreak $g(k,A,0,\ldots,0)=0$.
\end{theorem}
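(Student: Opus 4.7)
\textbf{Proof proposal for Theorem~\ref{asy}.}
Since the directed area is not additive, the plan is to compute the ordinary area $A(S^f(z_0)_\varepsilon)$ and the complex first moment $I(\varepsilon):=\int_{S^f(z_0)_\varepsilon}z\,dA$ separately, both of which \emph{are} additive on disjoint decompositions, and then recover $A^{\mathbb{C}}(\cdot)=A(\cdot)\cdot I/|I|$. As input I would first extract, by inverting the Fatou coordinate $\psi$ of $f$ (satisfying $\psi\circ f=\psi+1$, with $\psi(z)=-\frac{1}{k a_1 z^{k}}+(\text{rational tail in }1/z)+\rho\log z+O(1)$, and $\rho$ the residual index $\tfrac{a_{k+1}}{a_1^{2}}-\tfrac{k+1}{2}$), the asymptotic expansion
\begin{equation*}
z_n = A\,n^{-1/k}+\sum_{j=1}^{k-1}c_j(f)\,n^{-(1+j)/k}+c_k(f)\,n^{-(k+1)/k}\log n+d_k(f)\,n^{-(k+1)/k}+\ldots,
\end{equation*}
where $A=(-k a_1)^{-1/k}$ and each $c_j$ is polynomial in $a_2,\ldots,a_{j+1}$. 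From this, $|z_{n+1}-z_n|\sim|a_1|\,|z_n|^{k+1}$, and the critical index $n_\varepsilon$ defined by $|z_{n_\varepsilon+1}-z_{n_\varepsilon}|=2\varepsilon$ satisfies $n_\varepsilon\sim(\text{const})\cdot\varepsilon^{-k/(k+1)}$ with a full subleading expansion in powers of $\varepsilon^{1/(k+1)}$.

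Next I decompose $S^f(z_0)_\varepsilon$ as a ``discrete tail'' $\bigsqcup_{n\le n_\varepsilon}B(z_n,\varepsilon)$ of pairwise disjoint disks, together with a ``tube'' $\bigcup_{n>n_\varepsilon}B(z_n,\varepsilon)$ accumulating at the origin. For the discrete part, additivity gives $A=\pi\varepsilon^{2}(n_\varepsilon+1)$ and $I=\pi\varepsilon^{2}\sum_{n\le n_\varepsilon}z_n$; substituting the expansion of $z_n$ and evaluating each arising sum $\sum_{n\le n_\varepsilon}n^{-s}$ by Euler--Maclaurin, then inserting $n_\varepsilon=n_\varepsilon(\varepsilon)$, produces exactly the scale $\varepsilon^{1+j/(k+1)}$ for $j=1,\ldots,k$, together with the two logarithmic terms $\varepsilon^{1+k/(k+1)}\log\varepsilon$ and $\varepsilon^{2}\log\varepsilon$; the latter is the boundary contribution of $\sum 1/n$ at the upper cutoff and is therefore the one in which the residue $\rho$ lives. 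For the tube part I would apply Tricot's tube formula $A(\gamma_\varepsilon)=2\varepsilon L(\gamma)+\pi\varepsilon^{2}+(\text{curvature correction})$ and its first-moment analogue $\int_{\gamma_\varepsilon}z\,dA=2\varepsilon\int_\gamma z\,ds+\ldots$ to a smooth interpolation $\gamma$ of the near-origin orbit, with $L(\varepsilon)=\sum_{n>n_\varepsilon}|z_{n+1}-z_n|$ and $\int_\gamma z\,ds$ once more computed by Euler--Maclaurin applied to the expansion of $z_n$.

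Adding the two contributions and reorganizing by powers of $\varepsilon$, the leading order involves only the $A\,n^{-1/k}$ part of $z_n$ and yields $K_1$ in the form \eqref{impcoef} via a Beta-function integral (whence the ratio $\Gamma(1+\tfrac{1}{2k+2})/\Gamma(\tfrac{3}{2}+\tfrac{1}{2k+2})$); the intermediate coefficients $K_2,\ldots,K_{k-1}$ inherit from $c_j$ the claimed dependence on $a_2,\ldots,a_j$; $K_k$ absorbs the $\log\varepsilon$ piece appearing at the threshold scale $\varepsilon^{1+k/(k+1)}$; and $K_{k+1}$ traces back to the $n^{-(k+1)/k}\log n$ term in $z_n$, so that its linearity in $\rho$ splits the real and imaginary parts as displayed, the two universal prefactors in front of $\operatorname{Re}$ and $\operatorname{Im}$ emerging from two further Beta evaluations. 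The coefficient $S(f,z_0)$ collects all finite-part contributions from both the discrete and the tube computations, which explains its dependence on the full orbit.

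I expect the main obstacle to be the transition zone around $n=n_\varepsilon$: the outermost disks of the discrete tail partially overlap the tube, and the tube has a free end whose cap contributes to both $A$ and $I$. One must verify that these boundary corrections cancel between the two pieces up to $o(\varepsilon^{2}\log\varepsilon)$, which is precisely what makes the slightly arbitrary choice of $n_\varepsilon$ asymptotically irrelevant. A secondary difficulty is the justification of Tricot's tube formula for an interpolation of a discrete sequence, requiring a separate estimate comparing the $\varepsilon$-neighborhood of the polygonal path through $\{z_n\}_{n>n_\varepsilon}$ with the union of $\varepsilon$-disks around the $z_n$.
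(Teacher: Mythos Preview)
Your overall architecture matches the paper's: split $S^f(z_0)_\varepsilon$ into a tail of disjoint disks and a nucleus of overlapping disks at the critical index $n_\varepsilon$, compute the (additive) area and the (additive) quantity $\mathbf t\cdot A=\int z\,dA$ separately for each piece, and recombine. Your asymptotic expansion of $z_n$ via the Fatou coordinate, the expansion of $n_\varepsilon$, and the treatment of the tail by $\pi\varepsilon^2 n_\varepsilon$ and $\pi\varepsilon^2\sum_{n\le n_\varepsilon} z_n$ with Euler--Maclaurin are essentially what the paper does in Lemmas~\ref{asyneps}, \ref{asytail}, \ref{masstail}.

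The genuine gap is your treatment of the nucleus. You propose to compute $A(N_\varepsilon)$ and $\int_{N_\varepsilon} z\,dA$ by applying Tricot's tube formula $A(\gamma_\varepsilon)=2\varepsilon L(\gamma)+\pi\varepsilon^2+\ldots$ to a smooth interpolation $\gamma$ of $\{z_n\}_{n>n_\varepsilon}$, with $L=\sum d_n$. This does not work, and the issue is not secondary: the union of overlapping $\varepsilon$-disks is \emph{not} the $\varepsilon$-tube of any curve, and the discrepancy is of the leading order $\varepsilon^{1+1/(k+1)}$, not a lower-order correction. Concretely, after the change of variables $t=d(x)/(2\varepsilon)$ the paper obtains integrals of the form $\int_0^1\big(t\sqrt{1-t^2}+\arcsin t\big)\,t^{-2+j/(k+1)}\,dt$, which produce the Gamma-function constants in $K_1$ and $K_{k+1}$; your tube formula linearizes the integrand to $2t$, giving $\int_0^1 2t\cdot t^{-2+j/(k+1)}\,dt$ instead, hence the wrong constants throughout. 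No error estimate will close this, because the two formulas differ precisely on the range $t\in(0,1)$ that carries all the mass.

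What the paper actually does for the nucleus is an exact disk-by-disk decomposition: it first proves a geometric lemma (Proposition~\ref{nhood}$(ii)$, via a bisector argument) showing that for $\varepsilon$ small each new disk $K(z_i,\varepsilon)$ meets the union of the previous ones only in $K(z_{i-1},\varepsilon)$, so every contribution $D_i$ is an exact crescent. The crescent area and center of mass are then computed in closed form (Proposition~\ref{crescent}), giving $A(D_i)=2\varepsilon^2\big(\tfrac{d_i}{2\varepsilon}\sqrt{1-\tfrac{d_i^2}{4\varepsilon^2}}+\arcsin\tfrac{d_i}{2\varepsilon}\big)$. Summing these and passing to an integral (Lemmas~\ref{asynucl}, \ref{massnucl}) is what produces the correct Beta-type integrals. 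This crescent decomposition also dissolves your ``transition zone'' worry: there is no partial overlap to reconcile, because the decomposition is exact from the outset.
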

Note that coefficients $K_i$, $i=1,\ldots,k+1,$ do not depend on the initial point $z_0$, but only on the attracting sector of the initial point (via $A$). Dependence of $S$ on the initial point comes from the directed area of the tail of the $\varepsilon$-neighborhood of the orbit. On the other hand, the first $k+1$ coefficients in the development of the directed area of the nucleus are independent of the initial point, see Lemmas \ref{asynucl} to \ref{masstail} in Section~\ref{sec3}. It is interesting to observe that further coefficients in the asymptotic development depend on the initial point $z_0$.

\smallskip

\section{Proof of Theorem~\ref{asy}}\label{sec3}

We describe below the main steps of the proof of Theorem~\ref{asy}. The proof is rather long and technical, so each step is contained in a separate lemma below. Some auxiliary propositions are in the Appendix.

Suppose $\varepsilon>0$. By Definition~\ref{dir},
$$
A^{\mathbb{C}}(S^f(z_0)_\varepsilon)=A(S^f(z_0)_\varepsilon)\cdot \nu_{\mathbf{t}(S^f(z_0)_\varepsilon)}.
$$
Therefore, we need to compute the first $k+1$ terms in the development of the area of the $\varepsilon$-neighborhood and the first $k+1$ terms in the development of its normalized center of mass. Following the idea from \cite{tricot}, the $\varepsilon$-neighborhood of the orbit, $S^f(z_0)_\varepsilon$, can be regarded as a disjoint union of the nucleus $N_{\varepsilon}$ and the tail $T_{\varepsilon}$. The tail $T_{\varepsilon}$ is the union of disjoint discs $K(z_i,\varepsilon),\ i=0,\ldots,n_\varepsilon$. The nucleus $N_{\varepsilon}$ is the union of overlapping discs $K(z_i,\varepsilon)$, $i=n_\varepsilon+1,\ldots,\infty$. Here, $n_\varepsilon$ denotes the index when discs around the points start to overlap. In our case, this `critical' index $n_\varepsilon$ is unique and well-defined, since the distances between two consecutive points are strictly decreasing, see Proposition~\ref{nhood}.$i)$ in the Appendix.

\emph{Step 1.} In Lemma~\ref{asyneps}, we compute the first $k+1$ terms in the asymptotic development of the index $n_\varepsilon$, as $\varepsilon\to 0$.

\emph{Step 2. } Using the development for $n_\varepsilon$, we compute the first $k+1$ terms in the development of the area of the $\varepsilon$-neighborhood of the orbit, $A(S^f(z_0)_\varepsilon)$, as $\varepsilon\to 0$. This consists of two parts: first, in Lemma~\ref{asynucl}, we compute the development of the area of the nucleus, $A(N_\varepsilon)$. Second, in Lemma~\ref{asytail}, we compute the development of the area of the tail, $A(T_\varepsilon)$. Finally,
\begin{equation}\label{sumarea}
A(S^f(z_0)_\varepsilon)=A(N_\varepsilon)+A(T_\varepsilon).
\end{equation}

\emph{Step 3. } We need to find first $k+1$ terms in the development of the normalized center of mass of the $\varepsilon$-neighborhood of the orbit, $\nu_{\mathbf{t}(S^f(z_0)_\varepsilon)}$, as $\varepsilon\to 0$.
Obviously,
\begin{equation}\label{summass}
\nu_{\mathbf{t}(S^f(z_0)_\varepsilon)}=\frac{\mathbf{t}(N_\varepsilon)\cdot A(N_\varepsilon)+\mathbf{t}(T_\varepsilon)\cdot A(T_\varepsilon)}{|\mathbf{t}(N_\varepsilon)\cdot A(N_\varepsilon)+\mathbf{t}(T_\varepsilon)\cdot A(T_\varepsilon)|}.
\end{equation}
Again, in Lemma~\ref{massnucl}, we compute first $k+1$ terms for the nucleus, $\mathbf{t}(N_\varepsilon)\cdot A(N_\varepsilon)$. In Lemma~\ref{masstail}, we do the same for the tail, $\mathbf{t}(T_\varepsilon)\cdot A(T_\varepsilon)$.

Now, combining the obtained developments \eqref{sumarea} and \eqref{summass}, the development for $A^\mathbb{C}(S^f(z_0)_\varepsilon)$ follows.

\medskip
The following lemmas are used in the proof. They provide asymptotic developments up to the first $k+1$ terms of the expressions that are neccessary for computing the first $k+1$ terms of asymptotic development of the directed area. In all these developments, we provide precise information only on the first and on the $(k+1)$-st coefficient, since they are the only ones that affect the first and $(k+1)$-st coefficient in the development of the final directed area.
\begin{lemma}[Asymptotic development of $n_\varepsilon$]\label{asyneps}
Suppose $n_\varepsilon$ is the critical index separating the nucleus and the tail, as in the proof above. Then it has the following asymptotic development:
\begin{equation}\label{devneps}
n_\varepsilon=p_1\varepsilon^{-1+\frac{1}{k+1}}+p_2\varepsilon^{-1+\frac{2}{k+1}}+
p_3\varepsilon^{-1+\frac{3}{k+1}}+\ldots+p_k\varepsilon^{-1+\frac{k}{k+1}}+\log\varepsilon+o(\log\varepsilon),\ \varepsilon\to 0,
\end{equation}
where coefficients $p_i=p_i(k,A,a_2,\ldots,a_i)$, $i=1,\ldots,k+1,$ are real-valued functions of $k$ and first $i$ coefficients of $f(z)$. Moreover,
\begin{align*}
p_1=&\Big(\frac{2}{|a_1A^{k+1}|}\Big)^{-1+\frac{1}{k+1}},\\ p_{k+1}=&\frac{k}{k+1}Re\Big[\Big(\frac{a_{k+1}}{a_1}-\frac{(k+1)a_1}{2}\Big)A^{k}\Big]+g(k,A;a_2,\ldots,a_k),
\end{align*}
where $g=g(k,A,a_2,\ldots,a_k)$ is a real-valued function which satisfies $g(k,A,0,\ldots,0)\break =0$.
\end{lemma}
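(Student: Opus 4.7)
The plan is to derive an asymptotic expansion of the orbit $z_n = f^{\circ n}(z_0)$ to the order needed, and then to invert the defining condition $|z_{n_\varepsilon} - z_{n_\varepsilon+1}| \simeq 2\varepsilon$ for $n_\varepsilon$. The first step is a standard Fatou-type algebraic manipulation: writing
\[
  z_{n+1}^{-k} - z_n^{-k} = z_n^{-k}\Bigl(\bigl(1 + a_1 z_n^k + a_2 z_n^{k+1} + \cdots + a_{k+1} z_n^{2k}\bigr)^{-k} - 1\Bigr)
\]
and expanding by the binomial series gives a telescoping identity whose right-hand side is a polynomial in $z_n$ of degree $k$, with constant term $-ka_1$ (producing the crude estimate $z_n \sim A n^{-1/k}$ with $A = (-ka_1)^{-1/k}$) and leading $z_n^k$-coefficient $-ka_{k+1} + \tfrac{k(k+1)}{2}a_1^2$. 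Summing, and using the elementary asymptotics $\sum_{m=1}^{n-1} m^{-j/k} \sim \tfrac{k}{k-j} n^{1-j/k}$ for $0 < j < k$ and $\sum m^{-1} \sim \log n$, produces an expansion
\[
  z_n^{-k} = -ka_1 n + \sum_{j=1}^{k-1}\gamma_j\,n^{1-j/k} + \beta \log n + \gamma_0 + o(1),
\]
where the log coefficient $\beta = \bigl(-ka_{k+1}+\tfrac{k(k+1)}{2}a_1^2\bigr)A^k = \tfrac{a_{k+1}}{a_1} - \tfrac{k+1}{2}a_1$ is essentially the residual invariant, and $\gamma_j = \gamma_j(k, A, a_2, \ldots, a_j)$. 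Taking the $(-1/k)$-th root propagates a $\log n$ contribution into the expansion of $z_n$ itself at order $n^{-(k+1)/k}$.

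In the second step, by Proposition~\ref{nhood}(i) the critical index $n_\varepsilon$ is uniquely characterized (up to harmless integer corrections) by $d_{n_\varepsilon} \simeq 2\varepsilon$, where $d_n = |z_{n+1}-z_n| = |a_1|\,|z_n|^{k+1}\,|1 + (a_2/a_1)z_n + \cdots|$. Substituting the first-step expansion into $d_n$, the only delicate ingredient is the contribution of $\beta \log n$: taking $|z_n|^{k+1}$ projects $\beta$ onto its real part with a total multiplicative factor $\tfrac{k+1}{k^2}$, producing a coefficient $\lambda := \tfrac{k+1}{k^2}\operatorname{Re}\bigl(\tfrac{a_{k+1}}{a_1^2} - \tfrac{k+1}{2}\bigr)$ in front of $\log n\cdot n^{-1}$ relative to the leading $|a_1 A^{k+1}|\,n^{-(k+1)/k}$. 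Solving $d_{n_\varepsilon}=2\varepsilon$ by the bootstrap ansatz $n_\varepsilon = p_1\varepsilon^{-k/(k+1)}(1+o(1))$ already at leading order yields $p_1 = (2/|a_1 A^{k+1}|)^{-k/(k+1)}$, matching the stated $(2/|a_1 A^{k+1}|)^{-1+1/(k+1)}$. Iterating the bootstrap (plug the leading approximation into the corrections, re-invert, repeat) extracts $p_2, \ldots, p_k$ as real-valued functions of $k, A, a_2,\ldots, a_j$, and converts the $\lambda \log n\cdot n^{-1}$ term into a $\log\varepsilon$ contribution to $n_\varepsilon$ with coefficient
\[
  -\tfrac{k^2}{(k+1)^2}\lambda \;=\; -\tfrac{1}{k+1}\operatorname{Re}\bigl(\tfrac{a_{k+1}}{a_1^2} - \tfrac{k+1}{2}\bigr) \;=\; \tfrac{k}{k+1}\operatorname{Re}\bigl[(\tfrac{a_{k+1}}{a_1}-\tfrac{k+1}{2}a_1)A^k\bigr],
\]
the last equality by $A^k = -1/(ka_1)$. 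This is precisely the first summand of $p_{k+1}$ in the statement. The cascade of subleading contributions from the $\gamma_j$'s collects into the function $g(k,A,a_2,\ldots,a_k)$, which vanishes identically when $a_2=\cdots=a_k=0$ because then every sub-leading $\gamma_j$ is zero and the bootstrap has no subleading input to feed forward into the log term.

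The hard part is the bookkeeping in the bootstrap inversion. Two points deserve care: (a) the $p_j$'s are real although $z_n, A, \beta$ are complex, and this is enforced by the fact that only $|z_n|$ enters $d_n$, so every complex contribution is forced through an $\operatorname{Re}$-projection (exactly why $p_{k+1}$ carries $\operatorname{Re}[\,\cdot\,]$); (b) the exact constant $\tfrac{k}{k+1}$ in front of the residual-invariant part of $p_{k+1}$ must arise as the product $\tfrac{k+1}{k^2}\cdot\tfrac{k}{k+1}\cdot\bigl(-\tfrac{k}{k+1}\bigr) = -\tfrac{1}{k+1}$ of three successive factors, coming respectively from (i) differentiating $|z_n|^{k+1}$ against the log-correction of $z_n$, (ii) raising to the $-k/(k+1)$-th power in the inversion, and (iii) the substitution $\log n_\varepsilon = -\tfrac{k}{k+1}\log\varepsilon + O(1)$. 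Once this chain of constants is assembled and $A^k = -1/(ka_1)$ is substituted, the stated form of $p_{k+1}$ follows, and the verification that $g$ depends on at most $a_2,\ldots,a_k$ is a direct consequence of the triangular way the bootstrap mixes coefficients.
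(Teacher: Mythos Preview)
Your proof is correct and follows essentially the same route as the paper: derive the asymptotic expansion of $z_n$ via the Fatou coordinate $z^{-k}$ (the paper does this in Proposition~\ref{orbit} of the Appendix, using the equivalent substitution $w=(z/A)^{-k}$), pass to $d_n=|z_{n+1}-z_n|$, and invert $d_{n_\varepsilon}\simeq 2\varepsilon$ term by term. The paper's own proof is terser---it simply says the development of $n_\varepsilon$ is obtained ``iteratively, term by term'' from \eqref{neps} and \eqref{dn}---whereas you carry the constant-tracking through explicitly, but the underlying argument is identical.
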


\begin{proof}
By $d_n=|z_{n+1}-z_n|$,\ $n\in\mathbb{N}_0$, we denote the distances between two consecutive points of the orbit. The critical index $n_\varepsilon$ is then determined by the inequalities
\begin{equation}\label{neps}
d_{n_\varepsilon}<2\varepsilon,\ d_{n_\varepsilon-1}\geq 2\varepsilon.
\end{equation}

To obtain the asymptotic development of $n_\varepsilon$, we first compute asymptotic development for $d_n$, as $n\to\infty$. Using development \eqref{znasy} for $z_n$ from Proposition~\ref{orbit} in the Appendix, we get
\begin{equation}\label{znrasy}
\begin{split}
&z_{n+1}-z_n=a_1A^{k+1}n^{-1-\frac{1}{k}}+h_2n^{-1-\frac{2}{k}}+\ldots+h_k n^{-2}-\\
&\quad -\left[\Big(a_{k+1}-\frac{(k+1)a_1^2}{2}\Big)A^{2k+1}\frac{k+1}{k}+g(k,A,a_2,\ldots,a_{k})\right]n^{-2-\frac{1}{k}}\log n+\\
&\hspace{7.5cm}+o(n^{-2-\frac{1}{k}}\log n), \ n\to\infty,
\end{split}
\end{equation}
where $h_i=h_i(k,A,a_2,\ldots,a_i)$ and $g=g(k,A,a_2,\ldots,a_k)$ are complex-valued functions and $g(k,A,0,\ldots,0)=0$. Furthermore,
\begin{equation}\label{dn}
\begin{split}
&d_n=|a_1A^{k+1}|n^{-1-\frac{1}{k}}+q_2n^{-1-\frac{2}{k}}+\ldots+q_kn^{-2}-\\
&-\left[\frac{k+1}{k}|a_1A^{k+1}|Re\Big(\Big(\frac{a_{k+1}}{a_1}-\frac{(k+1)a_1}{2}\Big)A^{k}\Big)+r(k,A,a_2,..,a_{k})\right]n^{-2-\frac{1}{k}}\log n+\\
&\hspace{8cm}+o(n^{-2-\frac{1}{k}}\log n),\ n\to\infty,
\end{split}
\end{equation}
where $q_i=q_i(k,A,a_2,\ldots,a_i)$ and $r=r(k,A,a_2,\ldots,a_{k})$ are real-valued functions and $r(k,A,0,\ldots,0)=0$.

From \eqref{neps} and \eqref{dn} we deduce the asymptotic development of $n_{\varepsilon}$ as $\varepsilon\to 0$, iteratively, term by term.
\end{proof}
Note that the above proof provides developments \eqref{znrasy} and \eqref{dn} for $z_n-z_{n+1}$ and for the distances $d_n$ between two consecutive points, which we also need later.

\medskip

\begin{lemma}[Asymptotic development of the area of the nucleus]\label{asynucl}
The following asymptotic development for the area of the nucleus of the $\varepsilon$-neighborhood of the orbit holds:
\begin{equation}\label{nucleus}
\begin{split}
A(N_\varepsilon)=\frac{2^{-\frac{k}{k+1}}\sqrt\pi}{k}&\left(\frac{\Gamma(\frac{1}{2k+2})}{\Gamma(\frac{3}{2}+\frac{1}{2k+2})}-\sqrt\pi\right)|a_1|^{-\frac{1}{k+1}}\cdot \varepsilon^{1+\frac{1}{k+1}}+h_2\varepsilon^{1+\frac{2}{k+1}}+\\
&+\ldots+h_{k}^{(1)} \varepsilon^{1+\frac{k}{k+1}}+h_{k}^{(2)} \varepsilon^2\log\varepsilon+o(\varepsilon^2\log\varepsilon),\ \varepsilon\to 0.
\end{split}
\end{equation}
Here, $h_i=h_i(k,A,a_2,\ldots,a_i)$, $i=2,\ldots,k$, are real-valued functions of $k$ and first $i$ coefficients of $f(z)$.
\end{lemma}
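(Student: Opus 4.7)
The plan is to compute $A(N_\varepsilon)$ by inclusion--exclusion on the overlapping family $\{K(z_n,\varepsilon)\}_{n>n_\varepsilon}$, feeding in the asymptotics of $d_n$ from \eqref{dn} and of $n_\varepsilon$ from Lemma~\ref{asyneps}. By Proposition~\ref{nhood}$(i)$ the sequence $d_n$ is strictly decreasing; for $n$ close to $n_\varepsilon$ one has $d_n$ close to $2\varepsilon$ and only consecutive discs overlap, while deep in the nucleus ($n\gg n_\varepsilon$) the discs overlap heavily since $d_n\ll\varepsilon$.

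I would split the range of summation at a cutoff $M_\varepsilon$ chosen so that in the outer range $n_\varepsilon<n\le M_\varepsilon$ only consecutive discs intersect, while the remaining \emph{core} $\bigcup_{n>M_\varepsilon}K(z_n,\varepsilon)$ is swallowed by a small disc about the origin and contributes at most $O(\varepsilon^2\log\varepsilon)$. In the outer range, a telescoping inclusion--exclusion yields
$$A(N_\varepsilon)=\pi\varepsilon^2+\sum_{n=n_\varepsilon+1}^{M_\varepsilon}\bigl(\pi\varepsilon^2-L(d_n,\varepsilon)\bigr)+(\text{core}),$$
where $L(d,\varepsilon)=2\varepsilon^2\arccos(d/(2\varepsilon))-(d/2)\sqrt{4\varepsilon^2-d^2}$ is the elementary area of the intersection of two discs of radius $\varepsilon$ at distance $d$. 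The summand $\pi\varepsilon^2-L(d,\varepsilon)=2\varepsilon^2\bigl(\arcsin(d/(2\varepsilon))+(d/(2\varepsilon))\sqrt{1-(d/(2\varepsilon))^2}\bigr)$ vanishes like $2\varepsilon d$ as $d\to 0$, which guarantees that the summation is controlled near $0$.

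Next I would apply Euler--Maclaurin to convert the finite sum into an integral plus boundary corrections, and perform the change of variable $u=d_n/(2\varepsilon)$. Using $d_n\sim|a_1A^{k+1}|n^{-1-1/k}$, the Jacobian $dn/du$ is a power of $u$; one integration by parts reduces the leading contribution to the Beta integral
$$\int_0^1\sqrt{1-u^2}\,u^{-k/(k+1)}\,du=\frac{\sqrt\pi}{4}\cdot\frac{\Gamma(\frac{1}{2k+2})}{\Gamma(\frac{3}{2}+\frac{1}{2k+2})}.$$
Combining with the prefactor $\tfrac{2k}{k+1}\varepsilon^2(|a_1A^{k+1}|/(2\varepsilon))^{k/(k+1)}$ and simplifying with $|A|=|ka_1|^{-1/k}$, one recovers exactly the leading coefficient $\tfrac{2^{-k/(k+1)}\sqrt\pi}{k}\bigl(\tfrac{\Gamma(1/(2k+2))}{\Gamma(3/2+1/(2k+2))}-\sqrt\pi\bigr)|a_1|^{-1/(k+1)}$ of $\varepsilon^{1+1/(k+1)}$ asserted in \eqref{nucleus}. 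The intermediate coefficients $h_i$, $i<k$, are then read off by matching successive powers of $\varepsilon^{1/(k+1)}$, plugging the full expansion \eqref{dn} of $d_n$ into the Taylor series of $\arcsin$ and $\sqrt{1-u^2}$ and collecting like terms.

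The main obstacle will be isolating the coefficient $h_k^{(2)}$ of $\varepsilon^2\log\varepsilon$. This term collects contributions from three distinct mechanisms: the explicit $n^{-2-1/k}\log n$ correction already present in \eqref{dn}; the $\log\varepsilon$ term in the development of $n_\varepsilon$ from Lemma~\ref{asyneps}, which enters through the Euler--Maclaurin boundary correction at the lower limit $n=n_\varepsilon$; and the core contribution for $n>M_\varepsilon$, where the sausage approximation breaks down and the area must be estimated directly in terms of $|z_{M_\varepsilon}|$ and $\varepsilon$. Disentangling these logarithmic contributions from the $\varepsilon^{1+k/(k+1)}$-scale coefficient $h_k^{(1)}$, and verifying that the remainder is genuinely $o(\varepsilon^2\log\varepsilon)$, will be the most delicate part of the argument.
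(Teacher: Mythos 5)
Your computational core matches the paper's: representing the nucleus contribution from each disc as a crescent whose area is $2\varepsilon^2\bigl(\tfrac{d}{2\varepsilon}\sqrt{1-\tfrac{d^2}{4\varepsilon^2}}+\arcsin\tfrac{d}{2\varepsilon}\bigr)$, converting the resulting sum over $n>n_\varepsilon$ to an integral, changing variables to $t=d/(2\varepsilon)$, and evaluating the resulting Beta integrals (your $\int_0^1\sqrt{1-u^2}\,u^{-k/(k+1)}\,du$ computation is consistent with the paper's $T_1$). That part is sound.

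The gap is structural, in the cutoff $M_\varepsilon$ and the imagined ``core.'' You worry that deep in the nucleus, where $d_n\ll\varepsilon$, the discs ``overlap heavily'' and the crescent bookkeeping breaks down, so you truncate at $M_\varepsilon$ and estimate the remainder by a small disc about the origin. But Proposition~\ref{nhood}$(ii)$ asserts precisely that $K(z_i,\varepsilon)\setminus\bigcup_{j<i}K(z_j,\varepsilon)=K(z_i,\varepsilon)\setminus K(z_{i-1},\varepsilon)$ for \emph{all} $i$ once $\varepsilon$ is small enough --- heavy overlap is fine, because the bisector argument there shows that everything $D_i$ loses to earlier discs is already contained in $K(z_{i-1},\varepsilon)$. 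So the sum $\sum_{n>n_\varepsilon}A(D_n)$ represents $A(N_\varepsilon)$ exactly, with no truncation. Your cutoff strategy does not avoid this geometric input: to make the core area $o(\varepsilon^2\log\varepsilon)$ you need roughly $M_\varepsilon\gtrsim\varepsilon^{-k}$, whereas $n_\varepsilon\asymp\varepsilon^{-k/(k+1)}$, so the ``outer range'' covers almost the whole orbit and you'd still have to prove the crescent representation on it --- which is Proposition~\ref{nhood}$(ii)$ again, plus extra error-tracking for the core. A second, more technical omission: the asymptotic expansion \eqref{dn} of $d_n$ is only \emph{formal}, and before you can replace the sum by an integral of a monotone function of $x$, you must truncate to a finite $d_n^*=J_{k+1}d_n+Dn^{-2-1/k}$ and choose $D$ so that $0<d_n^*<2\varepsilon$ holds on the whole range $n\ge n_\varepsilon$; the paper does this in Proposition~\ref{auxsum}, and without it the Euler--Maclaurin step you invoke is not well posed. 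Your paragraph on isolating $h_k^{(2)}$ correctly identifies the delicate mechanisms, but two of the three sources you list (the $n_\varepsilon$ boundary term and the core) are artifacts of the truncation you introduced rather than genuine features of the problem.
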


\begin{proof}
By Proposition~\ref{nhood}.$ii)$ in the Appendix, the area of the nucleus can be computed by adding areas of infinitely many crescent-shaped contributions. Furthermore, Proposition~\ref{crescent} provides the formula for computing such areas. We have
\begin{equation}\label{nepsum}
A(N_\varepsilon)
=\varepsilon^2\pi+2\varepsilon^2 \sum_{n=n_{\varepsilon}}^{\infty}\left(\frac{d_n}{2\varepsilon}\sqrt{1-\frac{d_n^2}{4\varepsilon^2}}+\arcsin{\frac{d_n}{2\varepsilon}}\right).
\end{equation}
By Proposition~\ref{auxsum} in the Appendix, this sum can be replaced by the following integral:
\begin{equation}\label{int}
A(N_\varepsilon)= 2\varepsilon^2\int_{x=n_\varepsilon}^{\infty}\left(\frac{d(x)}{2\varepsilon}\sqrt{1-\frac{d(x)^2}{4\varepsilon^2}}+\arcsin{\frac{d(x)}{2\varepsilon}}\right) dx +\text{O}(\varepsilon^2),\ \varepsilon\to 0,
\end{equation}
where $d(x)$ is the strictly decreasing function from Proposition~\ref{auxsum}:\\ $$d(x)=q_1x^{-1-\frac{1}{k}}+q_2x^{-1-\frac{2}{k}}+\ldots+q_kx^{-2}+q_{k+1}x^{-2-\frac{1}{k}}\log x+Dx^{-2-\frac{1}{k}}.$$
\smallskip

We now compute the first $k+1$ terms in the asymptotic development of the integral from \eqref{int}, as $\varepsilon\to 0$. Applying the change of variables $t=\frac{d(x)}{2\varepsilon}$, we get
\begin{equation}\label{intch}
I=-2\varepsilon \int_{0}^{\frac{d(n_\varepsilon)}{2\varepsilon}}\left(t \sqrt{1-t^2}+\arcsin{t}\right) \frac{1}{d'(x(t))} dt.
\end{equation}

Here, $x(t)=d^{-1}(2\varepsilon t)$. Note that, for a given $\varepsilon$, $t$ is bounded in $[0,1)$. Therefore it holds that:
\begin{equation}\label{unif}(\varepsilon t)\to 0, \text{ as }\varepsilon\to 0,\text{ uniformly in }t.\end{equation}
The development of $x(t)=d^{-1}(2\varepsilon t)$, as $\varepsilon \to 0$, can be deduced using the already computed development for $n_\varepsilon=d^{-1}(2\varepsilon)$ in Lemma~\ref{asyneps}. We have that
\begin{equation}\label{ratio}
\begin{split}
&\frac{1}{d'(x(t))}=-\frac{k}{k+1}2^{-2+\frac{1}{k+1}}|a_1A^{k+1}|^{1-\frac{1}{k+1}}(\varepsilon t)^{-2+\frac{1}{k+1}}+p_2(\varepsilon t)^{-2+\frac{2}{k+1}}+\ldots+\\
&\ \ +p_k^{(1)} (\varepsilon t)^{-2+\frac{k}{k+1}}+p_k^{(2)}(\varepsilon t)^{-1}\log(\varepsilon t)+O\big((\varepsilon t)^{-1+\frac{1}{k+1}}\big),\quad t\in\left[0,\frac{d(n_\varepsilon)}{2\varepsilon}\right),\ \varepsilon \to 0,
\end{split}\end{equation}
where $p_i=p_i(k,A,a_2,\ldots,a_i)$, $i=2,\ldots,k$, are real-valued functions.

Using Proposition~\ref{auxi2} in the Appendix, we remove $\varepsilon$ from the boundary of $I$. The integral in \eqref{intch} is equal to
\begin{equation}\label{inte2}
I=-2\varepsilon\int_{0}^{1}(t\sqrt{1-t^2}+\arcsin{t})\frac{1}{d'(x(t))} dt +o(\log\varepsilon), \ \varepsilon\to 0.
\end{equation}

Now, substituting the development \eqref{ratio} in \eqref{inte2} and using \eqref{unif} to evaluate the last term, we get
\begin{equation}\label{enddev}
\begin{split}
I&\ =\left(\frac{2}{|a_1A^{k+1}|}\right)^{-2+\frac{1}{k+1}}\frac{k}{k+1}\cdot T_1\cdot \varepsilon^{-1+\frac{1}{k+1}}-2p_2\cdot T_2\cdot \varepsilon^{-1+\frac{2}{k+1}}-\\[0.2cm]
&\quad -\ldots-2p_k^{(1)}\cdot T_k\cdot\varepsilon^{-1+\frac{k}{k+1}}-2p_k^{(2)}\cdot S_{k+1}\cdot \log\varepsilon+o(\log\varepsilon),\ \varepsilon\to 0.
\end{split}
\end{equation}
Here, functions $p_i$ are real-valued functions from the development \eqref{ratio} and $S_{k+1}$ and $T_i$, $i=1,\ldots,k,$ are the following finite integrals:
\begin{align*}
T_i=&\int_{0}^{1}(t\sqrt{1-t^2}+\arcsin t)t^{-2+\frac{i}{k+1}} dt, \ i=1,\ldots,k,\\
S_{k+1}=&\int_{0}^{1}(t\sqrt{1-t^2}+\arcsin t)t^{-1}\log t dt.
\end{align*}
Since $T_1=\frac{(k+1)\sqrt \pi}{2k}\left(\frac{\Gamma(\frac{1}{2k+2})}{\Gamma(\frac{3}{2}+\frac{1}{2k+2})}-\sqrt\pi\right),$ combining \eqref{int}, \eqref{inte2} and \eqref{enddev}, we get the development \eqref{nucleus} for $A(N_\varepsilon)$.
\end{proof}

\begin{lemma}[Asymptotic development of the area of the tail]\label{asytail}
The area of the tail of the $\varepsilon$-neighborhood of the orbit has the following asymptotic development:
\begin{equation}\label{tail}
\begin{split}
&A(T_{\varepsilon})=\pi\Big(\frac{2}{|a_1A^{k+1}|}\Big)^{-1+\frac{1}{k+1}}\varepsilon^{1+\frac{1}{k+1}}+f_2\varepsilon^{1+\frac{2}{k+1}}+\ldots+f_k\varepsilon^{1+\frac{k}{k+1}}+\\
&\ \ +\Big[\pi\frac{k}{k+1}Re\Big(\Big(\frac{a_{k+1}}{a_1}-\frac{(k+1)a_1}{2}\Big)A^{k}\Big)+g(k,A,a_2,\ldots,a_k)\Big]\varepsilon^2\log\varepsilon+\\
&\hspace{9cm}+\text{o}(\varepsilon^2 \log\varepsilon),\ \varepsilon\to 0.
\end{split}
\end{equation}
Here, $f_i=f_i(k,A,a_2,\ldots,a_i)$, $i=2,\ldots,k$, are real-valued functions which depend only on $k$ and the first $i$ coefficients of $f(z)$. The function $g=g(k,A,a_2,\ldots,a_k)$ has the property that $g(k,A,0,\ldots,0)=0.$
\end{lemma}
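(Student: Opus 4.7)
The plan is to reduce this lemma to an almost immediate corollary of Lemma~\ref{asyneps}, exploiting the fact that, by construction (see the decomposition preceding Step~1 of the proof of Theorem~\ref{asy}, justified by Proposition~\ref{nhood}), the tail is a disjoint union $T_\varepsilon = \bigsqcup_{i=0}^{n_\varepsilon} K(z_i,\varepsilon)$ of $n_\varepsilon + 1$ open disks of radius $\varepsilon$. Additivity of planar Lebesgue measure therefore gives the exact identity
\[
A(T_\varepsilon) \;=\; (n_\varepsilon + 1)\,\pi\varepsilon^{2}.
\]

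My next step would be to substitute the expansion \eqref{devneps} for $n_\varepsilon$ directly into this identity and multiply term by term by $\pi\varepsilon^{2}$. Each term $p_i\,\varepsilon^{-1+\frac{i}{k+1}}$ contributes $\pi p_i\,\varepsilon^{1+\frac{i}{k+1}}$, and the logarithmic term $p_{k+1}\log\varepsilon$ contributes $\pi p_{k+1}\,\varepsilon^{2}\log\varepsilon$; the additive unit $\pi\varepsilon^{2}$ produced by the ``$+1$'' is swept into the remainder since $\varepsilon^{2}=o(\varepsilon^{2}\log\varepsilon)$ as $\varepsilon\to 0^{+}$. Setting $f_i := \pi p_i$ for $i=2,\ldots,k$ and plugging in the closed forms for $p_1$ and $p_{k+1}$ supplied by Lemma~\ref{asyneps} reproduces the leading coefficient $\pi\bigl(2/|a_1 A^{k+1}|\bigr)^{-1+\frac{1}{k+1}}$ and the logarithmic coefficient
\[
\pi\cdot\frac{k}{k+1}\,\mathrm{Re}\!\left[\Bigl(\tfrac{a_{k+1}}{a_1}-\tfrac{(k+1)a_1}{2}\Bigr)A^{k}\right] + \pi g(k,A,a_2,\ldots,a_k),
\]
as in the statement, the factor $\pi$ in front of $g$ being absorbed into $g$ itself, whose only required property is to vanish when $a_2=\cdots=a_k=0$.

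The hard part is really not in this lemma at all, but already hidden in Lemma~\ref{asyneps}: once the iterative inversion of the distance sequence $d_n$ is available, the tail expansion is practically a bookkeeping exercise. The two small points still worth verifying are, first, that global pairwise disjointness of the $n_\varepsilon+1$ disks in $T_\varepsilon$ (and not merely disjointness of consecutive ones, which is what the inequality $d_{n_\varepsilon-1}\geq 2\varepsilon$ guarantees directly) holds, which follows from the strict monotonicity of $(d_n)$ together with Proposition~\ref{nhood}; and second, that the remainder $o(\log\varepsilon)$ appearing in \eqref{devneps}, once multiplied by $\pi\varepsilon^{2}$, yields precisely the $o(\varepsilon^{2}\log\varepsilon)$ remainder claimed in \eqref{tail}, which is immediate.
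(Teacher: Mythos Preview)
Your proposal is correct and follows essentially the same route as the paper: the paper's proof is the one-line observation that $A(T_\varepsilon)=(n_\varepsilon-1)\pi\varepsilon^{2}$ followed by substitution of~\eqref{devneps}. The only discrepancy is the disc count (you use $n_\varepsilon+1$, the paper's proof uses $n_\varepsilon-1$, while the text preceding Step~1 has indices $i=0,\ldots,n_\varepsilon$), but any bounded additive shift is $O(\varepsilon^{2})=o(\varepsilon^{2}\log\varepsilon)$ and is absorbed into the remainder, so this is immaterial.
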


\begin{proof}
Since the tail, by definition, consists of $n_\varepsilon-1$ disjoint $\varepsilon$-discs, we have that $|T_\varepsilon|=(n_\varepsilon-1)\cdot \varepsilon^2\pi$. The statement follows from \eqref{devneps}.
\end{proof}

\begin{lemma}[Asymptotic development of the center of mass of the nucleus]\label{massnucl}
Let $\mathbf{t}(N_\varepsilon)$ denote the center of mass of the nucleus of the $\varepsilon$-neighborhood. The following asymptotic development holds:
\begin{equation}\label{nuclcenter}
\begin{split}
\mathbf{t}(N_\varepsilon)\cdot A(N_\varepsilon)&=q_1\varepsilon^{1+\frac{2}{k+1}}
+q_2\varepsilon^{1+\frac{3}{k+1}}+q_3\varepsilon^{1+\frac{4}{k+1}}+\ldots+q_{k}\varepsilon^{2}+\\
&\qquad\ \  +q_{k+1}\varepsilon ^{2+\frac{1}{k+1}}\log\varepsilon+o(\varepsilon^{2+\frac{1}{k+1}}\log\varepsilon),\ \varepsilon\to 0.
\end{split}
\end{equation}
Here, $q_i=q_i(k,A,a_2,\ldots,a_i)$, $i=1,\ldots,k+1,$ are complex-valued functions which depend on $k$ and on the first $i$ coefficients of $f(z)$. More precisely,
\begin{align*}
q_1&=\frac{k\sqrt\pi}{2(k-1)}\left(\frac{\Gamma(\frac{1}{k+1})}{\Gamma(\frac{3}{2}+\frac{1}{k+1})}-\sqrt\pi\right)\left(\frac{2}{|a_1A^{k+1}|}\right)^{-1+\frac{2}{k+1}}\cdot A,\\
q_{k+1}&=-\frac{k\sqrt\pi}{2(k+1)^2}\left(\sqrt\pi-\frac{\Gamma(\frac{1}{2}+\frac{1}{2k+2})}{\Gamma(2+\frac{1}{2k+2})}\right)\left(\frac{2}{|a_1A^{k+1}|}\right)^{\frac{1}{k+1}}\cdot\nonumber\\
&\hspace{4cm}\cdot Im\big(\frac{a_{k+1}}{a_1^2}-\frac{k+1}{2}\big)\cdot A\cdot i+g(k,A,a_2,\ldots,a_k),
\end{align*}
where $g=g(k,A,a_2,\ldots,a_k)$ is a complex-valued function such that $g(k,A,0,\ldots,0)\break =0$.
\end{lemma}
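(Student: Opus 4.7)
The plan is to parallel the proof of Lemma~\ref{asynucl}, computing now the first moment $\mathbf{t}(N_\varepsilon)\cdot A(N_\varepsilon) = \int_{N_\varepsilon} z\, dA$ rather than the area. As in Lemma~\ref{asynucl}, I would partition the nucleus (by Proposition~\ref{nhood}) into disjoint crescents $C_n = K(z_n,\varepsilon) \setminus K(z_{n+1},\varepsilon)$ for $n \ge n_\varepsilon+1$, of area $A_n$ as in \eqref{nepsum}. The key geometric observation is that the lens $K(z_n,\varepsilon)\cap K(z_{n+1},\varepsilon)$ has centroid at the midpoint $(z_n+z_{n+1})/2$, by the two-fold symmetry of equal-radius discs. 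Together with $\int_{K(z_n,\varepsilon)} z\, dA = z_n \pi\varepsilon^2$, this yields $\int_{C_n} z\, dA = z_n A_n + \tfrac{1}{2}(z_n - z_{n+1}) A_{\text{lens},n}$ with $A_{\text{lens},n}=\pi\varepsilon^2 - A_n$. Summation decomposes $\mathbf{t}(N_\varepsilon)A(N_\varepsilon)$ into Sum~(I) $= \sum z_n A_n$ and Sum~(II) $= \tfrac{1}{2}\sum (z_n-z_{n+1}) A_{\text{lens},n}$.

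For Sum~(I), I would convert to an integral via Proposition~\ref{auxsum} and change variables $t = d(x)/(2\varepsilon)$ exactly as in Lemma~\ref{asynucl}, producing $4\varepsilon^3 \int_0^1 z(x(t))\, f(t)\,(-1/d'(x(t)))\, dt$ with $f(t) = t\sqrt{1-t^2}+\arcsin t$. By Proposition~\ref{orbit} and equation~\eqref{ratio}, the leading-order product $z(x(t)) \cdot (-1/d'(x(t)))$ is of order $(\varepsilon t)^{-2+2/(k+1)}$ with complex coefficient proportional to $A\cdot |a_1 A^{k+1}|^{(k-1)/(k+1)}$. The integral $T_2 = \int_0^1 f(t) t^{-2+2/(k+1)}\, dt$ is evaluated by the substitution $u=t^2$ and one integration by parts, reducing (via $\Gamma(x+1)=x\Gamma(x)$) to $\frac{(k+1)\sqrt\pi}{2(k-1)}\bigl(\Gamma(1/(k+1))/\Gamma(3/2+1/(k+1)) - \sqrt\pi\bigr)$; this assembles into the claimed $q_1$. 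Subsequent $q_2, \ldots, q_k$ arise similarly from higher-order terms in the expansions, via integrals $T_i = \int_0^1 f(t) t^{-2+(i+1)/(k+1)}\, dt$. A direct order-of-magnitude check shows that Sum~(II) contributes only at order $\varepsilon^{2+1/(k+1)}$ without $\log\varepsilon$, absorbed into the $o(\varepsilon^{2+1/(k+1)}\log\varepsilon)$ remainder.

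The delicate coefficient is $q_{k+1}$, the $\log\varepsilon$ term at order $\varepsilon^{2+1/(k+1)}$. Write $\alpha := a_{k+1}/a_1^2 - (k+1)/2$. Two log sources within Sum~(I) contribute at this order: (a) a term $(A\alpha/k^2 + g')\,x^{-1-1/k}\log x$ in the expansion of $z(x)$, obtained by telescoping integration of the log term in \eqref{znrasy}, contributes through its product with the leading $-1/d'(x(t))$; and (b) the log term $p_k^{(2)}(\varepsilon t)^{-1}\log(\varepsilon t)$ of $-1/d'(x(t))$, whose coefficient $p_k^{(2)}$ is real and proportional to $Re(\alpha)$ via the log term in \eqref{dn}, contributes through its product with the leading $z(x(t))$ (which carries direction $A$). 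Both contributions produce log terms at the common order $(\varepsilon t)^{-k/(k+1)}\log(\varepsilon t)$. After extracting $\log\varepsilon$ from $\log(\varepsilon t) = \log\varepsilon + \log t$, the combination simplifies via the identity $A\alpha - A\cdot Re(\alpha) = A\cdot i\cdot Im(\alpha)$: the real parts of (a) and (b) cancel exactly, leaving the claimed $A\cdot i\cdot Im(\alpha)$ direction in $q_{k+1}$. The Gamma-function prefactor emerges from $\int_0^1 f(t) t^{-k/(k+1)}\, dt$, evaluated by the same Beta-function method.

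The main obstacle is the real-versus-complex bookkeeping underlying this cancellation. One must carefully track two parallel chains originating from the log term in \eqref{znrasy}: the complex-coefficient chain $z_{n+1}-z_n \to z_n \to z(x(t))$ and the real-coefficient chain $z_{n+1}-z_n \to d_n \to d(x) \to 1/d'(x(t))$, verifying that the numerical prefactors align so that the real parts of contributions (a) and (b) cancel exactly at the relevant order. Contributions from higher-order terms in these expansions, depending on the lower coefficients $a_2, \ldots, a_k$, collect into the function $g(k,A,a_2,\ldots,a_k)$, which vanishes in the pure-normal-form case $a_2 = \cdots = a_k = 0$.
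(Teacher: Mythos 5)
Your proposal follows the paper's proof essentially step for step: the decomposition $\int_{C_n}z\,dA = z_n A_n + \tfrac12(z_n-z_{n+1})A_{\mathrm{lens},n}$ is precisely the content of Proposition~\ref{crescent} (your lens‑symmetry observation is a clean way to obtain it), the $O(\varepsilon^{2+1/(k+1)})$ bound on the secondary sum matches the paper's estimate, and Sum~(I) is handled exactly as in Lemma~\ref{asynucl} via integral approximation, the substitution $t=d(x)/(2\varepsilon)$, and Beta‑function evaluation (your value of $T_2$ is correct). Your account of the two $\log$ sources for $q_{k+1}$ and the real‑versus‑imaginary cancellation is a faithful reconstruction of what the paper compresses into ``after some computation''; the numerical alignment you flag as the remaining obstacle is indeed the only part not spelled out, but the mechanism you identify is the right one.
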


\begin{proof}
By definition of the centre of mass and by Propositions~\ref{nhood}.$ii)$ and \ref{crescent} in the Appendix, we have that
\begin{equation*}
\begin{split}
\mathbf{t}&(N_\varepsilon)\cdot A(N_{\varepsilon})=z_{n_{\varepsilon}}\cdot\varepsilon^2\pi+\sum_{n=n_{\varepsilon}+1}^{\infty}A(D_n)\mathbf{t}(D_n)=\\
&=z_{n_{\varepsilon}}\cdot\varepsilon^2\pi+2\varepsilon^2\sum_{n=n_{\varepsilon}+1}^{\infty}\left(\frac{d_n}{2\varepsilon}\sqrt{1-\frac{d_n^2}{4\varepsilon^2}}+\arcsin{\frac{d_n}{2\varepsilon}}\right)z_{n}+\\
&\quad \qquad +\varepsilon^2\sum_{n=n_{\varepsilon}+1}^{\infty}\Big(\frac{d_n}{2\varepsilon}\sqrt{1-\frac{d_n^2}{4\varepsilon^2}}-\arcsin\sqrt{1-\frac{d_n^2}{4\varepsilon^2}}\Big)(z_n-z_{n+1}).
\end{split}
\end{equation*}
Here, $D_n$, $n\geq n_\varepsilon$, denote the contributions to the nucleus from the $\varepsilon$-discs of the points $z_n$.

We first show that
\begin{equation}\label{cmass1}
\mathbf{t}(N_\varepsilon)\cdot A(N_{\varepsilon})=2\varepsilon^2\sum_{n=n_{\varepsilon}+1}^{\infty}\left(\frac{d_n}{2\varepsilon}\sqrt{1-\frac{d_n^2}{4\varepsilon^2}}+\arcsin{\frac{d_n}{2\varepsilon}}\right)z_{n}+O(\varepsilon^{2+\frac{1}{k+1}}),
\end{equation}
as $\varepsilon\to 0$.
From \eqref{devneps} and \eqref{znasy}, $z_{n_{\varepsilon}}\cdot\varepsilon^2\pi=O(\varepsilon^{2+\frac{1}{k+1}})$, as $\varepsilon\to 0$. On the other hand,
by \eqref{znrasy}, we have that $z_n-z_{n+1}=O(n^{-\frac{k+1}{k}})$, as $n\to\infty$. Therefore, using boundedness of the term in parenthesis, integral approximation of the sum and then \eqref{devneps}, we get
\begin{equation*}
\begin{split}
&\Big|\varepsilon^2\sum_{n=n_{\varepsilon}+1}^{\infty}\Big(\frac{d_n}{2\varepsilon}\sqrt{1-\frac{d_n^2}{4\varepsilon^2}}-\arcsin\sqrt{1-\frac{d_n^2}{4\varepsilon^2}}\Big)(z_n-z_{n+1})\Big|\leq \\
&\hspace{4cm}\leq C_1\varepsilon^2\sum_{n=n_\varepsilon+1}^{\infty}n^{-\frac{k+1}{k}}\leq C_2\varepsilon^2 n_\varepsilon^{-\frac{1}{k}}\leq C \varepsilon^{2+\frac{1}{k+1}},
\end{split}
\end{equation*} for some constant $C>0$. This proves \eqref{cmass1}.

To compute the first $k+1$ terms in the asymptotic development of the sum in \eqref{cmass1},
\begin{equation}\label{ssum2}
S=\sum_{n=n_{\varepsilon}+1}^{\infty}\left(\frac{d_n}{2\varepsilon}\sqrt{1-\frac{d_n^2}{4\varepsilon^2}}+\arcsin{\frac{d_n}{2\varepsilon}}\right)z_{n},
\end{equation} as $\varepsilon\to 0$, we use the same idea as in Lemma~\ref{asynucl}. Therefore we omit the details. To make the integral approximation of the sum $S$, we have to cut off the formal developments $d_n$ and $z_n$ to finitely many terms. Let $d_n^*$ be as in Proposition~\ref{auxsum}, $d_n^*=J_{k+1} d_n+Dn^{-2-\frac{1}{k}}$. By $J_{k+1} z_n$, we denote the first $k+1$ terms in the asymptotic development of $z_n$. It can be shown similarly as before that
$$
S=\sum_{n=n_\varepsilon+1}^\infty \Big[\Big(\frac{d_n^*}{2\varepsilon}\sqrt{1-\frac{(d_n^*)^2}{4\varepsilon^2}}+\arcsin{\frac{ d_n^*}{2\varepsilon}}\Big) J_k z_n\Big]+o(\varepsilon^{\frac{1}{k+1}}\log\varepsilon).
$$
Since the real and the imaginary part of the function under the summation sign are strictly decreasing, as $n\to\infty$, we can make the integral approximation of the sum:
\begin{equation}\label{int2}
S=\int_{n_\varepsilon}^{\infty}\Big(\frac{d(x)}{2\varepsilon}\sqrt{1-\frac{d(x)^2}{4\varepsilon^2}}+\arcsin{\frac{d(x)}{2\varepsilon}}\Big)z(x) dx+o(\varepsilon^{\frac{1}{k+1}}\log\varepsilon).
\end{equation}
The function $d(x)$ is as defined in \eqref{dx} of the Appendix, and $z(x)$ is equal to
\begin{equation}\label{xz}
z(x)=g_1x^{-\frac{1}{k}}+g_2 x^{-\frac{2}{k}}+g_3x^{-\frac{3}{k}}+g_4 x^{-\frac{4}{k}}+\ldots+g_k x^{-1}
+g_{k+1} x^{-\frac{k+1}{k}}\log x,
\end{equation}
with coefficients $g_i\in\mathbb{C}$ from the development \eqref{znasy} of $z_n$ in the Appendix.

By making the change of variables $t=\frac{d(x)}{2\varepsilon}$ in the integral, we get
\begin{equation}\label{int3}
S=-2\varepsilon \int_{0}^{1+O(\varepsilon^{1-\frac{1}{k+1}})}\Big(t\sqrt{1-t^2}+\arcsin t\Big)\frac{z(x(t))}{d'(x(t))} dt+o(\varepsilon^{\frac{1}{k+1}}\log\varepsilon),
\end{equation}
as $\varepsilon\to 0$.

Using  \eqref{ratio}, \eqref{xz} and the development for $x(t)$ from the proof of Lemma~\ref{asynucl}, after some computation we get the development for $\frac{z(x(t))}{d'(x(t)}$, as $\varepsilon t\to 0$.
Again, let us note that $\varepsilon t \to 0$ uniformly in $t$, as $\varepsilon\to 0$, see \eqref{unif} before. Substituting the development in \eqref{int3} and proceeding in a similar way as in Lemma~\ref{asynucl}, we get the development \eqref{nuclcenter}.
\end{proof}

\begin{lemma}[Development of the center of mass of the tail]\label{masstail}
The following development for the center of the mass of the tail of the $\varepsilon$-neighborhood holds:
\begin{equation}\label{tailcenter}
\begin{split}
&\mathbf{t}(T_\varepsilon)A(T_\varepsilon)=\frac{k}{k-1}\pi\left(\frac{2}{|a_1A^{k+1}|}\right)^{-1+\frac{2}{k+1}}\cdot A\cdot \varepsilon^{1+\frac{2}{k+1}}
+g_2\varepsilon^{1+\frac{3}{k+1}}+\\
&\hspace{4cm}+\ldots
+g_{k-1}\varepsilon^{1+\frac{k}{k+1}}+g_k \varepsilon^2\log\varepsilon+S(f,z_0)\varepsilon^2-\\
&-\left[\frac{\pi}{k+1}\left(\frac{2}{|a_1A^{k+1}|}\right)^{\frac{1}{k+1}}Im\big(\frac{a_{k+1}}{a_1^2}-\frac{k+1}{2}\big)\cdot i\cdot A+h(k,A,a_2,\ldots,a_{k})\right]\cdot\\
&\hspace{6cm}\cdot\varepsilon^{2+\frac{1}{k+1}}\log\varepsilon+o(\varepsilon^{2+\frac{1}{k+1}}\log\varepsilon),\ \varepsilon\to 0.
\end{split}
\end{equation}
Here, $g_i=g_i(k,A,a_2,\ldots,a_i)$, $i=2,\ldots, k$, are complex-valued functions of $k,\ A$ and first $i$ coefficients of $f(z)$. The function $S(f,z_0)$ is a complex-valued function which depends on the whole $f(z)$ and on the initial condition $z_0$. The function $h=h(k,A,a_2,\ldots,a_k)$ is a complex-valued function which satisfies $h(k,A,0,\ldots,0)=0$.\end{lemma}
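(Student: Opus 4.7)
My plan exploits the structure of the tail $T_\varepsilon$: by definition it is the disjoint union of the $\varepsilon$-discs centred at $z_0, z_1, \ldots, z_{n_\varepsilon-1}$. Since each such disc has area $\pi\varepsilon^2$ and centre of mass equal to its own centre, one obtains the clean identity
\begin{equation*}
\mathbf{t}(T_\varepsilon)\cdot A(T_\varepsilon) = \pi\varepsilon^2\sum_{n=0}^{n_\varepsilon-1} z_n,
\end{equation*}
so the problem reduces to finding an asymptotic development of the partial sum on the right, up to an error $o(\varepsilon^{1/(k+1)}\log\varepsilon)$, after which multiplication by $\pi\varepsilon^2$ produces the announced scale in \eqref{tailcenter}.

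I would fix a large constant $N_0$ and split $\sum_{n=0}^{n_\varepsilon-1} z_n = \sum_{n=0}^{N_0-1} z_n + \sum_{n=N_0}^{n_\varepsilon-1} z_n$. The finite initial sum is a complex constant depending on the entire diffeomorphism and on $z_0$; together with all boundary constants generated below, it yields precisely the $S(f,z_0)\varepsilon^2$ contribution in \eqref{tailcenter}. For the second sum I substitute the asymptotic expansion of $z_n$ provided by Proposition~\ref{orbit} (the antidifference of \eqref{znrasy}),
\begin{equation*}
z_n = g_1 n^{-1/k} + g_2 n^{-2/k} + \ldots + g_k n^{-1} + g_{k+1} n^{-(k+1)/k}\log n + o(n^{-(k+1)/k}\log n),
\end{equation*}
with $g_1 = A$ and $g_{k+1}$ proportional, up to lower-order contributions in $a_2,\ldots,a_k$, to $(a_{k+1}/a_1^2 - (k+1)/2)\,A$. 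Because real and imaginary parts of each term are eventually monotone, the sum from $N_0$ to $n_\varepsilon-1$ is replaced by the corresponding integral, in the spirit of Proposition~\ref{auxsum}.

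Integrating term by term, I use $\int^{n_\varepsilon} x^{-i/k}dx = \frac{k}{k-i}n_\varepsilon^{1-i/k}+\mathrm{const}$ for $1\leq i\leq k-1$, $\int^{n_\varepsilon}x^{-1}dx=\log n_\varepsilon+\mathrm{const}$, and (by one integration by parts) $\int^{n_\varepsilon}x^{-(k+1)/k}\log x\,dx = -k\,n_\varepsilon^{-1/k}\log n_\varepsilon - k^2 n_\varepsilon^{-1/k}+\mathrm{const}$. Substituting the development \eqref{devneps} of $n_\varepsilon$ into each power $n_\varepsilon^{1-i/k}$ by a binomial/Taylor expansion and multiplying through by $\pi\varepsilon^2$ produces exactly the scale of \eqref{tailcenter}. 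The leading coefficient falls out immediately from the $i=1$ contribution: one gets $\pi\cdot\frac{k}{k-1}\cdot A\cdot p_1^{(k-1)/k}\cdot\varepsilon^{1+2/(k+1)}$, which simplifies to $\frac{k}{k-1}\pi\left(\frac{2}{|a_1 A^{k+1}|}\right)^{-1+2/(k+1)}A$ on substituting the explicit value of $p_1$.

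The main technical obstacle is the coefficient of $\varepsilon^{2+1/(k+1)}\log\varepsilon$, which is the first term sensitive to $a_{k+1}$. Two distinct sources feed this scale: first, the integral of $g_{k+1}x^{-(k+1)/k}\log x$, which through its $-k\,g_{k+1}\,n_\varepsilon^{-1/k}\log n_\varepsilon$ piece contributes an amount proportional to $g_{k+1}$; second, the subleading $p_{k+1}\log\varepsilon$ term in \eqref{devneps}, which upon binomial expansion of $n_\varepsilon^{(k-1)/k}$ generates a further $\log\varepsilon$ contribution proportional to $A\cdot p_{k+1}$. The delicate bookkeeping is to verify that the real parts of these two contributions cancel against each other — a cancellation reflecting the coherence between the $d_n$-integration in Lemma~\ref{asyneps} and the present $z_n$-integration, since $d_n = |z_{n+1}-z_n|$ — leaving exactly the stated imaginary coefficient proportional to $\mathrm{Im}(a_{k+1}/a_1^2 - (k+1)/2)$, multiplied by $A$ and by the factor $(2/|a_1 A^{k+1}|)^{1/(k+1)}$ arising from $p_1^{-1/k}$. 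The auxiliary function $h(k,A,a_2,\ldots,a_k)$ collects the analogous contributions coming from the intermediate coefficients $g_2,\ldots,g_k$ of $z_n$ together with all lower-order pieces of $p_{k+1}$ and $g_{k+1}$, and vanishes when $a_2 = \cdots = a_k = 0$.
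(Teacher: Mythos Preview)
Your proposal is correct and follows essentially the same route as the paper: start from the identity $\mathbf{t}(T_\varepsilon)A(T_\varepsilon)=\pi\varepsilon^2\sum_{n<n_\varepsilon}z_n$, split off a finite initial block (which produces the $z_0$-dependent constant $S(f,z_0)$), replace $z_n$ by its development \eqref{znasy}, evaluate the resulting partial sums, and finally substitute the expansion \eqref{devneps} of $n_\varepsilon$. The only cosmetic difference is that the paper evaluates the partial sums $\sum_{l\le n}l^{-i/k}$ via the recurrence $F(n+1)-F(n)=(n+1)^{-i/k}$ rather than by direct integral approximation, and it handles the remainder by showing $\sum_{l}R(l)=C(z_0,f)+O(n^{-1/k})$; your integral-approximation argument is equivalent. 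Your explicit identification of the real-part cancellation between the $g_{k+1}$-contribution and the $p_{k+1}$-contribution at scale $\varepsilon^{2+\frac{1}{k+1}}\log\varepsilon$ is in fact more detailed than what the paper writes out.
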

\begin{proof}
\begin{equation*}
\begin{split}
\mathbf{t}(T_\varepsilon)\cdot A(T_{\varepsilon}&)=\frac{\sum_{n=1}^{n_\varepsilon-1}z_n\cdot \varepsilon^2\pi}{A(T_\varepsilon)}\cdot A(T_\varepsilon)=\varepsilon^2\pi\sum_{n=1}^{n_\varepsilon-1}z_n=\\
&=\varepsilon^2\pi(z_0+\ldots z_{n(f,z_0)})+\varepsilon^2\pi\cdot\sum_{n=n(f,z_0)}^{n_\varepsilon-1}z_n.
\end{split}
\end{equation*}
Here, $n(f,z_0)$ is chosen to be the first index, obviously depending on the diffeomorphism $f$ and on the initial condition $z_0$, such that  $$z_n=J_{k+1}z_n+R(n),\text{ where }|R(n)|\leq Cn^{-1-\frac{1}{k}},\text{ for }n\geq n(f,z_0),$$ for some constant $C>0$. Then
\begin{equation}\label{sumsum}
\begin{split}
\sum_{n=n(f,z_0)}^{n_\varepsilon-1}z_n=g_1\sum_{n=n(f,z_0)}^{n_\varepsilon-1}n^{-\frac{1}{k}}  +&g_2\sum_{n=n(f,z_0)}^{n_\varepsilon-1}n^{-\frac{2}{k}}+\ldots   +g_k\sum_{n=n(f,z_0)}^{n_\varepsilon-1}n^{-1}+\\
&\quad +g_{k+1}\sum_{n=n(f,z_0)}^{n_\varepsilon-1}n^{-1-\frac{1}{k}}\log n+ \sum_{n=n(f,z_0)}^{n_\varepsilon-1}R(n),
\end{split}
\end{equation}
where complex numbers $g_i$ are as in the development of $z_n$, see Proposition~\ref{orbit} in the Appendix.

We now compute the first $k+1$ terms in the asymptotic developments of \eqref{sumsum}, as $n_\varepsilon\to\infty$.

Firstly, we concentrate on the last sum in \eqref{sumsum}. We show that
\begin{equation}\label{residu}
\sum_{l=n(z_0,f)}^n R(l)=C(z_0,f)+O(n^{-\frac{1}{k}}), \ n\to\infty,
\end{equation}
where $R(l)=O(l^{-1-\frac{1}{k}})$, as $l\to\infty$, and $C(z_0,f)$ is a complex constant depending on the diffeomorphism and on the initial condition. From the asymptotics of $R(l)$, the sum $\sum_{l=n(z_0,f)}^\infty R(l)$ is obviously convergent and equal to some constant $C(z_0,f)\in\mathbb{R}$. We write
\begin{equation*}
\sum_{l=n(z_0,f)}^n R(l)=\sum_{l=n(z_0,f)}^\infty R(l)-\sum_{l=n}^\infty R(l)=C(z_0,f)+O(n^{-\frac{1}{k}}),\ n\to\infty,
\end{equation*} where the second sum is evaluated as $O(n^{-1/k})$ by integral approximation of the sum.

Secondly, we estimate first three terms in the asymptotic developments of the first $k+1$ sums in \eqref{sumsum}, as $n\varepsilon\to\infty$.
We show the procedure on the first sum. Let $$F(n)=\sum_{l=n(f,z_0)}^{n}l^{-\frac{1}{k}}.$$
Obviously, it satisfies the recurrence relation $$F(n+1)-F(n)=(n+1)^{-\frac{1}{k}}, \ n\in\mathbb{N},$$ with initial condition $F(n(f,z_0))=n(f,z_0)^{-\frac{1}{k}}$. We determine the first term in its development by integral approximation: $$F(n)=\frac{k}{k-1}n^{\frac{k-1}{k}}+R(n),$$ where $R(n)=o(n^\frac{k-1}{k})$ as $n\to\infty$. Using this development, from recurrence relation for $F(n)$ we get the recurrence relation for $R(n)$ and the initial condition $R(n(z_0,f))$. By recursion, we get $$R(n)=R(n(z_0,f))+\sum_{l=n(z_0,f)}^n O(l^{-1-\frac{1}{k}}).$$
Using \eqref{residu}, we conclude that $\sum_{l=n(f,z_0)}^{n}l^{-\frac{1}{k}}=C(n(z_0,f))+O(n^{-1/k})$, $n\to\infty$. The same procedure can be repeated for other sums.

Thus we obtain the development of the sum $\sum_{n=n(z_0,f)}^{n_\varepsilon-1}z_n$, as $n_\varepsilon\to\infty$. Substituting $n_\varepsilon$ with the development \eqref{devneps}, we get the development \eqref{tailcenter}, as $\varepsilon\to 0$.
\end{proof}

\noindent\emph{Proof of Theorem~\ref{asy}.} The proof follows from Lemmas \ref{asyneps} to \ref{masstail}, as described at the beginning of the Section.\qed

\section{Proof of Theorems~\ref{fnf} and \ref{fnfe}}\label{sec33}
We now prove the main results, Theorem~\ref{fnf} and Theorem~\ref{fnfe}.
In the proofs of Theorems~\ref{fnf} and \ref{fnfe}, we need the following lemma. It shows that the leading exponent and the relevant first and $(k+1)$-st coefficient in the development of the directed area remain unchanged by a change of variables tangent to the identity, transforming the diffeomorphism to its extended formal normal form.

\begin{lemma}[Invariance of fractal properties in the extended formal class]\label{inv}\ Let $f_1(z)$ and $f_2(z)$ be two germs of parabolic diffeomorphisms which belong to the same extended formal class $(k,a,a_1)$. Then it holds:
\begin{align*}
\dim_{\text{\it B}}(S^{f_1}(w_0))&=\dim_\text{\it B}(S^{f_2}(z_0)),\\
\mathcal{M}^\mathbb{C}(S^{f_1}(w_0))&=\mathcal{M}^\mathbb{C}(S^{f_2}(z_0)),\\ \mathcal{R^\mathbb{C}}(S^{f_1}(w_0))&=\mathcal{R^\mathbb{C}}(S^{f_2}(z_0)).
\end{align*}
Here, $z_0$ and $w_0$ are any two initial points chosen from the attracting sectors of $f_1$ and $f_2$ with the same attracting direction.
\end{lemma}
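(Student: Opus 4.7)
My plan is to apply Theorem~\ref{asy} to both $f_1$ and $f_2$ and to verify that the data $(\dim_\text{\it B},K_1,K_{k+1})$ from the expansion \eqref{comarea} depend only on quantities shared by the two diffeomorphisms; the three equalities then follow from \eqref{fra1} and \eqref{fra2}. The invariance of $\dim_\text{\it B}(S^f(z_0))=1-\tfrac{1}{k+1}$ is immediate since the extended class fixes $k$. For $\mathcal{M}^{\mathbb{C}}=K_1$, the explicit form \eqref{impcoef} of $K_1$ depends only on $k$, $|a_1|$, and the normalized attracting direction $\nu_A$; all three coincide for $f_1$ and $f_2$ (the class fixes $k$ and $a_1$, and $z_0,w_0$ are chosen in sectors corresponding to the same $A$), hence $\mathcal{M}^{\mathbb{C}}(S^{f_1}(w_0))=\mathcal{M}^{\mathbb{C}}(S^{f_2}(z_0))$.

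The substantive part is the equality $\mathcal{R}^{\mathbb{C}}(S^{f_1}(w_0))=\mathcal{R}^{\mathbb{C}}(S^{f_2}(z_0))$, i.e.\ invariance of $K_{k+1}$. The plan is a reduction to the common extended normal form $g_0(z)=z+a_1 z^{k+1}+a_1^2 a\,z^{2k+1}$. By Proposition~\ref{ch2}, each $f_i$ is formally conjugate to $g_0$ via some tangent-to-identity series $\hat\phi_i(z)=z+c^{(i)}_2 z^2+\cdots$; truncating $\hat\phi_i$ at degree $N\geq 2k+1$ yields a polynomial change $\phi_i$ for which $\tilde f_i:=\phi_i^{-1}\circ f_i\circ\phi_i$ agrees with $g_0$ up to order $z^{2k+1}$. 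Since Theorem~\ref{asy} asserts that $K_{k+1}$ depends only on $k$, $A$ and the first $k+1$ coefficients of the diffeomorphism, one gets $K_{k+1}(\tilde f_1)=K_{k+1}(g_0)=K_{k+1}(\tilde f_2)$; and evaluating \eqref{impcoef} at $g_0$, where $a_2=\cdots=a_k=0$ (so $g(k,A,0,\ldots,0)=0$) and $a_{k+1}/a_1^2=a$, presents this common value as a function of $(k,a_1,a,A)$ alone.

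The principal obstacle that remains is to establish $K_{k+1}(f_i)=K_{k+1}(\tilde f_i)$, i.e.\ invariance of $K_{k+1}$ under the convergent polynomial tangent-to-identity conjugation $\phi_i$. Because $\phi_i(z)=z+O(z^2)$, the identification $\phi_i(S^{\tilde f_i}(z_0))=S^{f_i}(\phi_i(z_0))$ moves each orbit point by $O(|z_n|^2)=O(n^{-2/k})$ and distorts consecutive distances by a relative factor $1+O(n^{-1/k})$. I would rerun the derivations of Lemmas~\ref{asyneps}--\ref{masstail} with these perturbed data and check that the corrections feed only into coefficients of \eqref{comarea} strictly beyond $K_{k+1}$; equivalently, the function $g(k,A,a_2,\ldots,a_k)$ in \eqref{impcoef} is forced to be exactly the $\mathbb{R}$-linear correction making the whole bracket plus $g$ depend only on the formal residue $a=Res\bigl(\tfrac{1}{f(z)-z},0\bigr)$ rather than on $a_{k+1}/a_1^2$ individually. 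The chain $K_{k+1}(f_1)=K_{k+1}(\tilde f_1)=K_{k+1}(g_0)=K_{k+1}(\tilde f_2)=K_{k+1}(f_2)$ then closes the proof.
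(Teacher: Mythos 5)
Your overall strategy is the same as the paper's: reduce both germs to the common extended normal form $g_0$ by a finite tangent-to-identity polynomial conjugation, invoke the $(2k+1)$-jet dependence of $K_1$ and $K_{k+1}$ from Theorem~\ref{asy} to identify the normal-form values, and close the chain by showing these two coefficients are preserved under the conjugation. You also correctly observe that $K_1$ can be handled more cheaply than $K_{k+1}$, since its explicit formula \eqref{impcoef} already mentions only $k$, $|a_1|$ and $\nu_A$, all of which are class data.

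The gap is precisely where you flag it: the equality $K_{k+1}(f_i)=K_{k+1}(\tilde f_i)$ under the polynomial conjugation $\phi_i$ is not proved, only planned. "Rerun the derivations" is the right instinct, but as stated it misses the observation that actually makes the computation terminate quickly, and your quantitative claim along the way is false as written. A tangent-to-identity change $\phi(z)=z+c_2z^2+\cdots$ perturbs $z_n$ by $O(n^{-2/k})$, which is of the same order as the \emph{second} coefficient $g_2$ in the expansion \eqref{znasy}; so the corrections certainly do \emph{not} feed "only into coefficients of \eqref{comarea} strictly beyond $K_{k+1}$" --- they change $K_2,\dots,K_k$. What saves the argument is a finer statement: the first coefficient $g_1$ and the logarithmic coefficient $g_{k+1}$ of $z_n$ are untouched (the lowest log-bearing term in $c_2z_n^2$ sits at order $n^{-(k+2)/k}\log n$, one step below $n^{-(k+1)/k}\log n$), and upon inspecting Lemmas~\ref{asyneps}--\ref{masstail} one sees that $K_1$ and $K_{k+1}$ depend on the development of $z_n$ \emph{only through} $g_1$ and $g_{k+1}$, not through $g_2,\dots,g_k$. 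This is exactly the paper's auxiliary Lemma~\ref{auxinv}, phrased for an elementary change $\phi_l(z)=z+cz^l$ and then composed; stating and proving it is the content you would need to supply. Your suggested alternative --- that the function $g(k,A,a_2,\dots,a_k)$ is "forced to be" the correction making the bracket in \eqref{impcoef} depend only on the residue $a$ --- is circular: it assumes the invariance of $K_{k+1}$ that you are trying to establish, rather than deriving it.
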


In the proof of Lemma~\ref{inv} we need the following auxiliary lemma:
\begin{lemma}\label{auxinv}
Let $f(z)$ be a parabolic diffeomorphism and let $g(z)=\phi_l^{-1}\circ f\circ \phi_l(z)$, where $\phi_l(z)=z+cz^l$, $l\geq 2$. Let $S^f(z_0)=\{z_n\}$ be an attracting orbit of $f(z)$ and let $S^g(v_0)=\{w_n=\phi_l(z_n)\}$ be the corresponding attracting orbit of $g(z)$. Then it holds that
\begin{equation}\label{orb}
K_1^{S^f(z_0)}=K_1^{S^g(v_0)},\ K_{k+1}^{S^f(z_0)}=K_{k+1}^{S^g(v_0)},
\end{equation}
where $K_1$ and $K_{k+1}$ denote the first and the $(k+1)$-st coefficients in the asymptotic developments \eqref{comarea} of the directed areas of the $\varepsilon$-neighborhoods of the corresponding orbits.
Furthermore, the equalities \eqref{orb} hold also if $S^f(z_0)$ and $S^g(v_0)$ are any two orbits of $f(z)$ and $g(z)$ respectively which converge to the same attracting direction.
\end{lemma}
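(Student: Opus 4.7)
\textbf{Proof plan for Lemma~\ref{auxinv}.}

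The plan is to reduce the claim to an invariance statement about the data $(k, a_1, \nu_A)$ and certain polynomial expressions in $a_2,\ldots,a_{k+1}$, exploiting the explicit formulas for $K_1$ and $K_{k+1}$ provided by Theorem~\ref{asy}. First I would expand $g(z) = \phi_l^{-1}\circ f\circ \phi_l(z)$ with $\phi_l(z) = z+cz^l$, $l\geq 2$. Because $\phi_l$ is tangent to the identity, a direct substitution shows $g(z) = z + a_1 z^{k+1} + O(z^{k+2})$; in particular $g$ is parabolic of the same multiplicity $k$ and with the same leading coefficient $a_1$. Combined with the fact that $w_n = \phi_l(z_n) = z_n(1+O(z_n^{l-1}))$, so $w_n/|w_n|$ tends to the same normalized attracting direction $\nu_A$ as $z_n/|z_n|$, the explicit formula
\[
K_1 = \tfrac{k+1}{k}\sqrt\pi\,\tfrac{\Gamma(1+\frac{1}{2k+2})}{\Gamma(\frac{3}{2}+\frac{1}{2k+2})}\bigl(\tfrac{2}{|a_1|}\bigr)^{1/(k+1)}\nu_A
\]
from Theorem~\ref{asy} immediately yields $K_1^{S^f(z_0)} = K_1^{S^g(v_0)}$.

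Next I would handle $K_{k+1}$ by splitting on $l$. When $l\geq k+2$, a straightforward expansion shows that $\phi_l$ leaves the coefficients $a_1,a_2,\ldots,a_{k+1}$ of $f$ unchanged; since the formula for $K_{k+1}$ from Theorem~\ref{asy} is a function of $k$, $\nu_A$, $a_1$, $a_{k+1}$, and $a_2,\ldots,a_k$ alone (through the piece $g(k,A,a_2,\ldots,a_k)$), the invariance is automatic. For $2\leq l\leq k+1$ the conjugation does alter the coefficients $a_i$ with $i\leq k+1$, and here the key identity I would invoke is the formal invariance of the residual index $i(f,0)=\mathrm{Res}\bigl(\tfrac{1}{f(z)-z},0\bigr)$ under any tangent-to-the-identity conjugation (Proposition~\ref{ch1} and the discussion after it). I would then verify that the expression
\[
\tfrac{a_{k+1}}{a_1^2} - \tfrac{k+1}{2} + \Psi(k, A, a_2,\ldots,a_k)
\]
that appears as the relevant combination in the formula for $K_{k+1}$ coincides (up to an $a_1$-dependent multiplicative factor) with the residual index, and therefore is preserved. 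The auxiliary function $g(k,A,a_2,\ldots,a_k)$ vanishing at $(a_2,\ldots,a_k)=0$ is exactly the correction needed to absorb the changes of the lower coefficients and produce an invariant, explaining its presence in the statement of Theorem~\ref{asy}.

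Finally, to extend the result to any pair of orbits of $f$ and $g$ tending to the same attracting direction (not only the matched pair $\{z_n\}$, $\{\phi_l(z_n)\}$), I would argue that within the proof of Theorem~\ref{asy} the coefficients $K_1,\ldots,K_{k+1}$ are shown (Lemmas~\ref{asynucl}--\ref{masstail}) to be independent of the initial point, depending on the orbit only through the attracting sector via $\nu_A$. Since $\phi_l$ maps the attracting sectors of $f$ into the corresponding attracting sectors of $g$ with the same direction, both $K_1$ and $K_{k+1}$ are determined by the common data $(k,a_1,\nu_A, a_{k+1}, a_2,\ldots,a_k)$ up to the formal invariant described above, independent of the choice of initial point in that sector.

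The main obstacle I anticipate is case $2\leq l\leq k+1$ in the treatment of $K_{k+1}$: identifying the specific combination of coefficients appearing there with a genuinely invariant quantity. Rather than grinding through the transformation rules $a_i\mapsto a_i'$ induced by $\phi_l$ for every $i$, the clean route is to use the residual-index interpretation; the bookkeeping that matches $\Psi$ with the residue expansion of $1/(f(z)-z)$ is where most of the technical work lives.
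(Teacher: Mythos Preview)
Your route differs from the paper's, and the hard case you flag ($2\le l\le k+1$ for $K_{k+1}$) is precisely where your plan is incomplete. The paper never works at the level of the diffeomorphism coefficients $a_i$ and their transformation law under $\phi_l$. Instead it works at the level of the orbit asymptotics: starting from the development
\[
z_n=g_1 n^{-1/k}+g_2 n^{-2/k}+\cdots+g_k n^{-1}+g_{k+1}n^{-(k+1)/k}\log n+o(\cdot)
\]
(Proposition~\ref{orbit}), it observes that applying $\phi_l(z)=z+cz^l$ with $l\ge 2$ to $z_n$ adds terms of order $n^{-l/k}$ and smaller, so the first coefficient $g_1=A$ and the logarithmic coefficient $g_{k+1}$ of the development of $w_n$ are unchanged (intermediate coefficients $g_2,\ldots,g_k$ may change). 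It then points back to the computations in Section~\ref{sec3} to note that $K_1$ and $K_{k+1}$ are built only from $g_1$ and $g_{k+1}$, not from the intermediate $g_i$. That is the whole argument; no case split on $l$, no transformation rules for the $a_i$, no appeal to the residual index.

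Your proposed alternative via Theorem~\ref{asy} and the residual index runs into a genuine obstruction in the case $2\le l\le k+1$. Theorem~\ref{asy} presents $K_{k+1}$ with an \emph{unspecified} additive piece $g(k,A,a_2,\ldots,a_k)$, about which only $g(k,A,0,\ldots,0)=0$ is asserted. To show invariance from that formula you must know exactly how this $g$ depends on $a_2,\ldots,a_k$, and your plan is to prove that the full expression coincides with a function of the residual index $i(f,0)$. That identification is not available from the statement of Theorem~\ref{asy}; establishing it would force you back into the Section~\ref{sec3} computations to pin down $g$ term by term---strictly more work than the paper's orbit-level argument. (A posteriori the identification does hold, but in the paper's logic it is a \emph{consequence} of Lemma~\ref{auxinv} and Theorem~\ref{fnfe}, not a tool for proving Lemma~\ref{auxinv}.) Your treatment of $K_1$ and of the sector-independence in the last paragraph is fine and matches the paper.
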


\begin{proof}
Let $\{z_n\}$ be an attracting orbit of $f(z)$. We first take $S^g(v_0)=\{w_n\}$ to be the image of $\{z_n\}$ under $\phi_l,\ l>1$.
Using development \eqref{znasy} for $z_n$, we compute the development of $w_n=\phi_l(z_n)$. It is easy to see that, since $l>1$, the first coefficient and the $(k+1)$-st coefficient remain the same as in $z_n$, while the other coefficients can change. In particular, the attracting direction $A$ for $S^g(v_0)$ remains the same as for $S^f(z_0)$.

On the other hand, it can be seen in Section~\ref{sec3} that only the first and the $(k+1)$-st coefficient of the development of $z_n$ participate in the first and the $(k+1)$-st coefficient of the developments of $z_n-z_{n+1}$, $d_n$ and $n_\varepsilon$. Finally, the first and the $(k+1)$-st coefficient in the development of $A^{\mathbb{C}}(S^f(z_0)_\varepsilon)$, $K_1$ and $K_{k+1}$, depend only on the first and the $(k+1)$-st coefficient in the development of $z_n$, not on other coefficients. Therefore, the two coefficients remain unchanged in the change of variables $\phi_l(z)$.	 Finally, since $K_1$ and $K_{k+1}$ do not depend on the choice of initial point $z_0$ and $v_0$ inside one sector, we can choose any two orbits of the initial and of the transformed diffeomorphism converging to the same attracting direction $A$.
\end{proof}

\noindent\emph{Proof of Lemma~\ref{inv}.}
For the given diffeomorphisms $f_1(z)$ and $f_2(z)$, let \linebreak $\phi^{1,2}=\phi_{k}^{1,2}\circ \phi_{k-1}^{1,2}\circ\ldots\circ \phi_2^{1,2}$ denote the changes of variables obtained by composition of $k-1$ transformations of the above type, which present the first $k-1$ steps in transforming $f_1$ and $f_2$ to their extended formal normal forms. Let
\begin{align}\label{jets}
g_1=&(\phi^1)^{-1}\circ f_1 \circ \phi^1=z+a_1 z^{k+1}+a_1^2 a z^{2k+1}+\ldots,\\
g_2=&(\phi^2)^{-1}\circ f_2 \circ \phi^2=z+a_1 z^{k+1}+a_1^2 a z^{2k+1}+\ldots.\nonumber
\end{align}
Obviously, by Lemma~\ref{auxinv}, it holds that:
\begin{equation}\label{e1}
K_1^{g_1}=K_1^{f_1},\ K_1^{g_2}=K_1^{f_2} \text{ and } K_{k+1}^{g_1}=K_{k+1}^{f_1},\ K_{k+1}^{g_2}=K_{k+1}^{f_2},
\end{equation}
for the orbits corresponding to the same attracting direction. The notation $K_1^{f}$ is a bit imprecise, since the value differs for orbits in $k$ sectors, but we use it for simplicity and keep in mind that we always consider orbits converging to the same attracting direction.

\noindent Let $g_0$ be the extended formal normal form, $g_0(z)=z+a_1 z^{k+1}+a_1^2 a z^{2k+1}$. By further changes of variables, transforming $g_1$ and $g_2$ to the extended formal normal form $g_0$, the $(2k+1)$-jets from \eqref{jets} remain the same. Therefore we have, by the development \eqref{impcoef} in Theorem~\ref{asy}, that
\begin{equation}\label{e2}
K_1^{g_1}=K_1^{g_0},\ K_1^{g_2}=K_1^{g_0} \text{ and }  K_{k+1}^{g_1}=K_{k+1}^{g_0},\ K_{k+1}^{g_2}=K_{k+1}^{g_0},
\end{equation}
for the orbits corresponding to the same attracting direction. By \eqref{e1} and \eqref{e2}, it follows that $K_1^{f_1}=K_1^{f_2}$ and $K_{k+1}^{f_1}=K_{k+1}^{f_2}$, for the orbits of $f_1$ and $f_2$ converging to the same attracting direction.

Finally,  changes of variables do not change the multiplicity $k+1$ of the diffeomorphism. Therefore the leading exponent of the directed areas for all the orbits equals $1-\frac{1}{k+1}$.

Relating the coefficients $K_1$, $K_{k+1}$ and exponent $k$ with fractal properties of orbits, by \eqref{fra1} and \eqref{fra2}, the statement follows.
\qed

\smallskip
Note that the statement of the above Lemma is no longer true if we admit changes of variables which are not tangent to the identity. Only box dimension is then preserved.
\bigskip

\noindent \emph{Proof of Theorem~\ref{fnfe}.}
Let $f(z)=z+a_1z^{k+1}+o(z^{k+1})$ be a parabolic germ and let $g_0(z)=z+a_1z^{k+1}+a_1^2\cdot a\cdot z^{2k+1}$ be its extended formal normal form. Let $S^f(z_0)$ be an attracting orbit of $f(z)$ and let $S^{g_0}(w_0)$ be an attracting orbit of $g_0(z)$ with the same attracting direction.

The bijective correspondence between $k$ and $\dim_B(S^f(z_0))$ is obvious by \eqref{fra1}. Let $k$ then be fixed. Applying formulas \eqref{impcoef} from Theorem~\ref{asy} to the orbit of the formal normal form $g_0(z)$, we get the following formulas:
\begin{align}\label{impcoeffnf}
&K_1^{g_0}=\frac{k+1}{k}\cdot\sqrt\pi\cdot\frac{\Gamma(1 + \frac{1}{2k+2})}{\Gamma(\frac{3}{2}+\frac{1}{2k+2})}\bigg(\frac{2}{|a_1|}\bigg)^{1/(k+1)}\cdot \nu_A,\\
&K_{k+1}^{g_0}=\nu_A\cdot\Bigg[-\frac{\pi}{k+1}Re\big(a-\frac{k+1}{2}\big)+\Bigg.\nonumber\\
&\qquad \Bigg. \bigg(\frac{2(k-1)}{k+1}\Big(\frac{|a_1|}{2}\Big)^{1/(k+1)}\frac{\frac{\Gamma(\frac{1}{2}+\frac{1}{2k+2})}{\Gamma(2+\frac{1}{2k+2})}-\sqrt{\pi}}{\frac{\Gamma(\frac{1}{k+1})}{\Gamma(\frac{3}{2}+\frac{1}{k+1})}+\sqrt{\pi}}\bigg)\cdot i\cdot Im\big(a-\frac{k+1}{2}\big) \Bigg].\nonumber
\end{align}
By Lemma~\ref{inv},
\begin{equation}\label{co1}
K_1^f=K_1^{g_0},\ K_{k+1}^f=K_{k+1}^{g_0}.
\end{equation}
On the other hand, by \eqref{fra1} and \eqref{fra2},
\begin{equation}\label{co2}
K_1^{f}=\mathcal{M}^{\mathbb{C}}(S^f(z_0)),\ K_{k+1}^{f}=\mathcal{R}^{\mathbb{C}}(S^f(z_0)).
\end{equation}
Using \eqref{co1} and \eqref{co2}, we see that formulas \eqref{form} and \eqref{a1} in Theorem~\ref{fnfe} are just reformulations of \eqref{impcoeffnf}. They give, for a fixed $k$, the bijective correspondence between the pairs $(a_1,a)$ and $(\mathcal{M}^{\mathbb{C}}(S^f(z_0)),\ \mathcal{R}^{\mathbb{C}}(S^f(z_0)))$.
\qed
\bigskip

\noindent \emph{Proof of Theorem~\ref{fnf}.} Let $f(z)$ and $g_0(z)$ be as in the above proof. The standard formal normal form $f_0(z)$ is given by $f_0(z)=z+z^k+az^{2k+1}$, where $a$ is the same as in the extended form $g_0(z)$. The normal form $f_0(z)$ is obtained from $g_0(z)$ by making one extra change of variables of the type
$$
\phi(z)=a_1^{-1/k}z,
$$
in order to make coefficient $a_1$ equal to $1$. Since $a$ and $k$ are the same as in $g_0(z)$, formulas \eqref{form} expressing $k$ and $a$ from the fractal properties of the orbit $S^f(z_0)$ have already been obtained in the proof of Theorem~\ref{fnf}. Therefore the standard formal normal form of a diffeomorphism, described by the pair $(k,a)$, can be deduced from fractal properties $(\dim_B(S^f(z_0)),\ \mathcal{M}^{\mathbb{C}}(S^f(z_0),\ \mathcal{R}^{\mathbb{C}}(S^f(z_0)))$ of just one orbit of the diffeomorphism.
\qed
\medskip

Let us note that Theorem~\ref{fnf} cannot be formulated as an equivalence statement between $(k,a)$ and fractal properties. From the pair $(k,a)$, one cannot uniquely determine the fractal properties of the orbit of the initial diffeomorphism $f(z)$. Aside from the box dimension, the \emph{diffeomorphisms from the same standard formal class do not share the same fractal properties}. By \eqref{form} in Theorem~\ref{asy}, the directed Minkowski and residual content depend on the first coefficient $a_1$. The information on the initial fractal properties is lost by making changes of variables which are not tangent to the identity and which change $a_1$.

\section{Perspectives}\label{sec4}
Let us comment shortly on the perspectives for further research.

\subsection{Problem of analytic classification}\

We have shown in this article that the formal type of a diffeomorphism can be read from any orbit, using only its fractal properties. We are interested in prospects of \emph{analytic classification of parabolic diffeomorphisms using $\varepsilon$-neighborhoods of orbits}. The analytic classification was given independently by Ecalle and Voronin in \cite{ecalle} and \cite{voronin}. The analytic classes are given by the formal invariants $(k,a)$, as well as by $2k$ diffeomorphisms, which are called \emph{Ecalle-Voronin functional moduli of analytic classification}.

In general, a diffeomorphism is analitically conjugate to its formal normal form only sectorially. One orbit of a diffeomorphism lies completely in one sector. Our goal is to see if the analytic type can be read from the directed area of the $\varepsilon$-neighborhoods of only one orbit, or if perhaps something can be said in this directon if we consider $\varepsilon$-neigborhoods of one orbit per sector for $2k$ sectors, and compare them in an appropriate way.

\subsection{Can one hear the shape of a drum?}
In this article we were motivated by the question: to what extent a parabolic diffeomorphism itself can be reconstructed from its one realization, that is, from its one orbit? So far, we know that, from only one orbit, we can tell the formal type of a diffeomorphism. The concepts are somewhat similar to the concepts of the famous problem:\emph{ Can one hear a shape of a drum?}, presented by M. Kac in 1966. The question that is posed is if one can reconstruct the equation from only one solution, or, if not completely, how much can be said.

The vibrations of a drum are given by the Laplace equation with zero boundary condition on a given domain $\Omega$. The domain of the equation is the only unknown in the problem. The eigenvalues of the Laplace operator, $0<\lambda_1\leq \lambda_2\leq\ldots$, $\lambda_i\to\infty$,  present the frequencies. They are coefficients in the Fourier development of the solution. One tries to reconstruct the domain of the equation from these eigenvalues.

Let $N(\lambda)=\{\lambda_i:\lambda_i<\lambda\}$ be the eigenvalue counting function for the Laplace operator on $\Omega$. It was conjectured that from the asymptotic development of $N(\lambda)$, as $\lambda\to\infty$, one can obtain some properties of the domain:
\begin{conjecture}[Modified Weyl-Berry conjecture, Conjecture 5.1 in \cite{lapi}]
If $\Omega\subset \mathbb{R}^N$ has a Minkowski measurable boundary $\Gamma$, with box dimension $d\in(N-1,N)$, then
$$
N(\lambda)=(2\pi)^{-N}\mathcal{B}_N\cdot A(\Omega)\lambda^{N/2}+c_{N,d}\ \cdot \mathcal{M}(\Gamma)\cdot \lambda^{\frac{d}{2}}+o(\lambda^{\frac{d}{2}}),\ \lambda\to\infty.
$$
\end{conjecture}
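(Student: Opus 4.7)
The plan is to combine Weyl's classical asymptotic formula for the Dirichlet Laplacian with a careful analysis of the spectral contribution from a thin neighborhood of the boundary $\Gamma$, using Dirichlet--Neumann bracketing and a Tauberian conversion to transfer the geometric information encoded in $\mathcal{M}(\Gamma)$ into the second term of $N(\lambda)$.

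First, I would apply Dirichlet--Neumann bracketing to split $\Omega$ as $\Omega_\varepsilon^{\mathrm{int}} \sqcup (\Gamma_\varepsilon \cap \Omega)$, where $\Omega_\varepsilon^{\mathrm{int}} = \{x \in \Omega : d(x,\Gamma) > \varepsilon\}$. Since $\Omega_\varepsilon^{\mathrm{int}}$ has essentially smooth boundary and area $A(\Omega) - \mathcal{M}(\Gamma)\varepsilon^{N-d} + o(\varepsilon^{N-d})$ by the Minkowski measurability hypothesis, Weyl's law applied there yields the leading term $(2\pi)^{-N} \mathcal{B}_N A(\Omega) \lambda^{N/2}$ plus an error controlled by the volume of the boundary collar. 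For the collar $\Gamma_\varepsilon \cap \Omega$ itself, I would cover it by small $(N{-}1)$-dimensional tiles of thickness $\varepsilon$, apply the product eigenvalue formula on each tile, and sum to obtain two-sided bounds of the form $c_\pm\,\mathcal{M}(\Gamma)\,\varepsilon^{-d}$ in the regime $\varepsilon^2 \lambda \asymp 1$. Choosing $\varepsilon(\lambda) \sim \lambda^{-1/2}$ matches the spectral scale to the geometric one and produces a candidate correction term $c_{N,d}\,\mathcal{M}(\Gamma)\,\lambda^{d/2}$, with the constant $c_{N,d}$ identified by comparison with the exactly solvable model problem on a half-space.

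The final step is to upgrade the bracketing inequalities to a genuine asymptotic. I would pass to the heat trace $\theta(t) = \sum_i e^{-\lambda_i t}$, where the boundary contribution is expected to behave like $C_{N,d}\,\mathcal{M}(\Gamma)\,t^{-d/2}$ as $t \to 0^+$, and then invert via a Karamata-type Tauberian theorem, using monotonicity of $N(\lambda)$ to recover the claimed asymptotic. The main obstacle is precisely this Tauberian step in dimension $N \ge 2$: Minkowski measurability is too weak to control the second-order fluctuations of $N(\lambda)$, and counterexamples to the strongest form of the statement are already known when $\Gamma$ is irregular beyond what $\mathcal{M}(\Gamma)$ records. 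A complete proof appears to require either additional regularity of $\Gamma$ (for example, Lipschitz or self-similar structure, as in the one-dimensional Lapidus--Pomerance theorem) or a refined Tauberian theorem tailored to the precise rate of convergence in the definition of the Minkowski content.
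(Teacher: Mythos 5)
The statement you are trying to prove is labeled a \emph{conjecture} in the paper, and the paper supplies no proof of it. Immediately after stating it, the paper notes that the conjecture was proved only in the one-dimensional case $N=1$ (Corollary 2.3 of Lapidus--Pomerance) and that it remains open in higher dimensions. So there is no argument in the paper for you to be compared against; the conjecture appears only as background motivation in the ``Perspectives'' section, where the author draws an analogy between the drum problem and the problem of reconstructing a diffeomorphism from the $\varepsilon$-neighborhoods of its orbits.

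Your sketch is sensible as far as it goes, and you deserve credit for correctly identifying exactly where it breaks: the Dirichlet--Neumann bracketing and the matching $\varepsilon \sim \lambda^{-1/2}$ give two-sided bounds of the right order, but upgrading those bounds to a genuine second-term asymptotic requires controlling oscillations that Minkowski measurability alone does not constrain. This is not a fixable gap in your argument; it reflects the actual state of the problem. In fact the modified Weyl--Berry conjecture as stated is now known to be \emph{false} for $N\ge 2$ (Lapidus and Pomerance themselves constructed counterexamples in later work), which is consistent with your observation that additional regularity of $\Gamma$ would be needed. In the one-dimensional case, where the conjecture is a theorem, the proof does not go through heat-trace Tauberian methods at all but rather through a direct analysis of the eigenvalue counting function for a disjoint union of intervals, relating it to the string of lengths $\ell_j$ whose complement is $\Gamma$ and to the Riemann zeta function; that argument exploits the explicit structure of the spectrum in a way that has no analogue in higher dimensions. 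The correct conclusion here is simply that the statement cannot be proved as posed, and your own closing paragraph already says essentially this.
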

Here, $\mathcal{B}_N$ is the volume of the unit ball in $\mathbb{R}^N$, $A(\Omega)$ the Lebesgue measure of the set $\Omega\in\mathbb{R}^N$ and $\mathcal{M}(\Gamma)$ the Minkowski content of the boundary. The constant $c_{N,d}$ is a real constant depending only on $N$ and $d$.
\smallskip

The conjecture was proven in the one-dimensional case, $N=1$, in Corollary 2.3 in \cite{lapi}. In other dimensions, it is still open.

Although we do not see the direct relation between two problems, in many aspects they appear similar. The general idea of reconstructing the equation from one solution is common, as well as the fact that, for obtaining more information on the equation, we need to utilize further terms in appropriate developments.

For more on this problem, see \cite{lapid} or \cite{lapi}.
\section{Appendix}\label{sec5}
Here we state auxiliary propositions that we need in the proof of Theorem~\ref{asy}.


Let $f(z)=z+a_1z^{k+1}+a_2z^{k+2}+a_3z^{k+3}+\ldots$, $a_i\in\mathbb{C}$, $a_1\neq 0$, be a parabolic diffeomorphism. Let the initial point $z_0$ belong to an attracting sector. We denote by $A$ the attracting direction
$$
A=(-ka_1)^{-\frac{1}{k}},
$$
where we chose the one of $k$ complex roots for which $z_0$ is closest to the direction $A$.
\begin{proposition}[Asymptotic development of $z_n$]\label{orbit}
Let $z_n=f^{(\circ n)}(z_0)$, $n\in\mathbb{N}_0$, denote the points of the orbit $S^f(z_0)$. Then
\begin{equation}\label{znasy}
\begin{split}
z_n&=g_1n^{-\frac{1}{k}}+g_2 n^{-\frac{2}{k}}+g_3n^{-\frac{3}{k}}+g_4 n^{-\frac{4}{k}}+\ldots+g_k n^{-1}
+\\
&\hspace{5cm}+g_{k+1} n^{-\frac{k+1}{k}}\log n +o(n^{-\frac{k+1}{k}}\log n),
\end{split}
\end{equation}
where coefficients $g_i=g_i(k,A,a_2,\ldots,a_i)$, $i=1,\ldots,k+1$, are complex-valued functions of $k$ and first $i$ coefficients of $f(z)$.
More precisely,
$$
g_1=A,\ \ g_{k+1}=-\frac{1}{k}A^{k+1}\left(\frac{a_{k+1}}{a_1}-\frac{a_1(k+1)}{2}+h(k,A,a_2,\ldots,a_{k})\right),
$$
where $h=h(k,A,a_2,\ldots,a_{k})$ is a complex-valued function which satisfies \linebreak $h(k,A,0,\ldots,0)=0$.
\end{proposition}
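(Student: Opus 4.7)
The plan is to transform the orbit recursion to asymptotic dynamics near infinity via the Fatou-type coordinate $w = \psi_0(z) = -\frac{1}{k a_1 z^k}$, which maps the attracting petal containing $z_0$ to a neighborhood of infinity on which the dynamics becomes close to a translation by $1$. First I would compute, by direct expansion of $f(z)^{-k} = z^{-k}(1 + a_1 z^k + a_2 z^{k+1} + \ldots + a_{k+1} z^{2k} + O(z^{2k+1}))^{-k}$ up to order $z^k$, the recurrence
\[
w_{n+1} - w_n = 1 + \frac{a_2}{a_1} z_n + \frac{a_3}{a_1} z_n^2 + \ldots + \frac{a_k}{a_1} z_n^{k-1} + \bigg[\frac{a_{k+1}}{a_1} - \frac{(k+1) a_1}{2}\bigg] z_n^k + O(z_n^{k+1}),
\]
with $z_n = \bigl(-\tfrac{1}{ka_1 w_n}\bigr)^{1/k}$, the branch being the one for which $z_n$ remains asymptotic to the attracting direction containing $z_0$.

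Next I would bootstrap. Since $w_{n+1} - w_n \to 1$, necessarily $w_n = n + o(n)$, so at leading order $z_n \sim A n^{-1/k}$ with $A = (-k a_1)^{-1/k}$, which already identifies $g_1 = A$. Feeding this back into the recurrence and summing via the elementary estimates
\[
\sum_{\ell=1}^{n} \ell^{-j/k} = \tfrac{k}{k-j} n^{1-j/k} + c_j + O(n^{-j/k}),\ 1 \leq j \leq k-1, \qquad \sum_{\ell=1}^{n} \ell^{-1} = \log n + \gamma + O(n^{-1}),
\]
yields
\[
w_n = n + \sum_{j=1}^{k-1} D_j\, n^{1-j/k} + \delta \log n + C + o(1),
\]
where
\[
\delta = A^k \bigg[\frac{a_{k+1}}{a_1} - \frac{(k+1) a_1}{2}\bigg] + \eta(k, A, a_2, \ldots, a_k),
\]
with $\eta$ a complex-valued function vanishing at $a_2 = \ldots = a_k = 0$; the correction $\eta$ collects the cross-contributions produced when the subleading corrections to $z_n$ (themselves depending on $a_2, \ldots, a_k$) are substituted into the $z_n^j$ terms with $j < k$, producing additional $n^{-1}$ contributions after summation.

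Finally I would invert: writing $z_n = A w_n^{-1/k}$ and expanding
\[
w_n^{-1/k} = n^{-1/k}\bigg[1 + \sum_{j=1}^{k-1} D_j\, n^{-j/k} + \delta\, \tfrac{\log n}{n} + \tfrac{C}{n} + \ldots\bigg]^{-1/k}
\]
by the binomial series, one regroups terms in the scale $\{n^{-1/k}, n^{-2/k}, \ldots, n^{-1}, n^{-(k+1)/k}\log n, \ldots\}$, reads off the coefficients $g_2, \ldots, g_k$ as complex-valued functions of $k, A, a_2, \ldots, a_j$, and identifies the coefficient of $n^{-(k+1)/k} \log n$ as
\[
g_{k+1} = -\frac{A \delta}{k} = -\frac{1}{k} A^{k+1}\bigg(\frac{a_{k+1}}{a_1} - \frac{(k+1)a_1}{2} + h(k, A, a_2, \ldots, a_k)\bigg),
\]
with $h := \eta/A^k$ satisfying $h(k, A, 0, \ldots, 0) = 0$, as claimed.

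The main obstacle will be the bookkeeping in the middle step. The leading fractional-power terms $n^{1-j/k}$ with $1 \leq j \leq k-1$ telescope cleanly because $j/k \notin \mathbb{Z}$; the resonance at $j = k$ is what produces the logarithm. But the intermediate coefficients $a_2, \ldots, a_k$ pollute the coefficient $\delta$ through nonlinear interactions between the subleading corrections of $z_n$ and the lower-order terms $z_n^j$ in the recurrence. To capture every $n^{-1}$ contribution one must iterate the expansion of $z_n$ carefully, and verifying that all the cross-contributions assemble into a single function $h$ vanishing at $(0, \ldots, 0)$ is the core technical point.
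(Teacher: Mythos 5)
Your proposal is correct and follows essentially the same route as the paper's proof: passage to the Fatou coordinate $w = -1/(ka_1 z^k)$ (which is the paper's substitution $z = Aw^{-1/k}$, $A=(-ka_1)^{-1/k}$), expansion of the translated recurrence to order $w_n^{-1}$ so that the coefficient $\tfrac{a_{k+1}}{a_1}-\tfrac{(k+1)a_1}{2}$ appears, and then a $k$-step bootstrap with term-by-term integral/sum approximation before inverting $z_n = A w_n^{-1/k}$. You also correctly identify the only genuinely delicate point, namely that the cross-contributions from substituting refined expansions of $z_n$ into the lower-order terms $z_n^j$ all land in a single function $h$ that vanishes when $a_2=\ldots=a_k=0$, which is precisely what the paper's iterated refinement delivers.
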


\begin{proof} The following proof mimics the technique for obtaining the asymptotic development of a real iterative sequence from \cite{bruijn}, Chapter 8.4. In the complex case, we repeat the whole technique sectorially.  Suppose as above that $z_0$ lies in an attracting sector around attracting vector $A$. By \cite{milnor}, the whole trajectory $\{z_n\}$ lies in that attracting sector and is tangent to $A$ at the origin. On this sector, the change of variables
\begin{equation}\label{change}
z=A w^{-\frac{1}{k}}
\end{equation}
is well-defined, the complex root of $w$ being uniquely determined. The trajectory $\{z_n\}$ is transformed to $\{w_n\}$ and obviously
\begin{equation}\label{conv}
Arg(w_n^{-\frac{1}{k}})\to 0,\text{ as } n\to\infty.
\end{equation}
The recurrence relation for $z_n$
\begin{equation}\label{difference}
z_{n+1}=z_n+a_1z_n^{k+1}+a_2z_n^{k+2}+a_3z_n^{k+3}+\ldots
\end{equation}
transforms to the following recurrence relation for $w_n$:
\begin{equation}\label{chdiff}
\begin{split}
w_{n+1}=&w_n+1+\frac{a_2}{a_1}Aw_n^{-\frac{1}{k}}+\frac{a_3}{a_1}A^2w_n^{-\frac{2}{k}}+\ldots+\\
&+\frac{a_{k}}{a_1}A^{k-1}w_n^{-\frac{k-1}{k}}+\left(\frac{a_{k+1}}{a_1}-\frac{(k+1)a_1}{2}\right)A^{k}w_n^{-1}+o(w_n^{-1}).
\end{split}
\end{equation}
Obviously, $\frac{w_n-w_0}{n}=\frac{1}{n}\sum_{l=1}^{n}(w_l-w_{l-1})$. By \eqref{chdiff}, it holds that $(w_l-w_{l-1})\to 1$, as $l\to\infty$, therefore $\lim_{n\to\infty}\frac{w_n}{n}=1$. From \eqref{chdiff} we then have $$w_{n+1}-w_{n}=1+O(n^{-\frac{1}{k}}).$$ By recursion and using integral approximation of the sum, we get
$$
w_{n}=n+O(n^\frac{k-1}{k}).
$$
To compute the exact constant of the second term, with this development, we return to \eqref{chdiff} and get
$$
w_{n+1}-w_n=1+\frac{a_2}{a}A n^{-\frac{1}{k}}+O(n^{-\frac{2}{k}}).
$$
By recursion and using integral approximation of the sum,
$$
w_n=n+\frac{a_2}{a}A\frac{k}{k-1}n^{\frac{k-1}{k}}+O(n^\frac{k-2}{k}).
$$
Repeating this procedure $k$ times, we get the first $k+1$ terms in the development of $w_n$ as $n\to\infty$. The development for $z_n$ then follows from the development for $w_n$, \eqref{change} and \eqref{conv}.
\end{proof}

\medskip
The next two propositions give the tool for computing areas and centers of mass of $\varepsilon$-neighborhoods of orbits. Let $f(z)$ be a parabolic diffeomorphism and $S^f(z_0)=\{z_n,\ n\in\mathbb{N}_0\}$ its attracting orbit. Let $K(z_i,\varepsilon)$ denote the $\varepsilon$-disc centered at $z_i$.  We represent the $\varepsilon$-neighborhood of $S^f(z_0)$ as
\begin{equation}\label{union}
S^f(z_0)_\varepsilon=\bigcup_{i=0}^\infty D_i.
\end{equation}
Here, $D_0=K(z_0,\varepsilon)$ and $D_i=K(z_i,\varepsilon)\backslash \bigcup_{j=0}^{i-1} K(z_j,\varepsilon)$, $i\in\mathbb{N},$ are contributions from $\varepsilon$-discs of points $z_i$.

\begin{proposition}[Geometry of $\varepsilon$-neighborhoods of orbits]\ \label{nhood}

\begin{itemize}
\item[(i)] Distances between two consecutive points of the orbit, $|z_{n+1}-z_n|$, are, starting from some $n_0$, strictly decreasing as $n\to \infty$ .
\item[(ii)] For small enough $\varepsilon>0$,
$$
K(z_i,\varepsilon)\backslash \bigcup_{j=0}^{i-1}K(z_j,\varepsilon)=K(z_i,\varepsilon) \backslash K(z_{i-1},\varepsilon),\ i\in\mathbb{N}.
$$
\end{itemize}
\end{proposition}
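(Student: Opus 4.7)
For part (i), I would substitute the asymptotic expansion of $z_n$ from Proposition~\ref{orbit} into the recurrence $z_{n+1}-z_n = a_1 z_n^{k+1} + O(z_n^{k+2})$ to derive
\[
d_n = |a_1 A^{k+1}|\, n^{-(k+1)/k}\bigl(1 + O(n^{-1/k})\bigr),\qquad n \to\infty.
\]
Strict monotonicity then follows by Taylor-expanding $(n+1)^{-(k+1)/k}$ in $1/n$: the leading term of $d_n - d_{n+1}$ is $\tfrac{k+1}{k}\,|a_1 A^{k+1}|\,n^{-(2k+1)/k}$, which is strictly positive, and the corrections are of strictly smaller order. Hence $d_n > d_{n+1}$ for all $n$ beyond some $n_0$ that depends on $f$ and $z_0$.

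For part (ii), the claim reduces to proving $K(z_i,\varepsilon) \cap K(z_j,\varepsilon) \subset K(z_{i-1},\varepsilon)$ for every $i \geq 1$ and every $j \leq i-2$, provided $\varepsilon$ is small enough. I would fix a cutoff $N_0$ and argue in three regimes. For bounded indices $i \leq N_0$ there are only finitely many pairs $(i,j)$, all with $|z_i - z_j| > 0$, so taking $\varepsilon < \tfrac12 \min_{i \leq N_0,\, j \leq i-2}|z_i-z_j|$ makes those intersections empty. For $i > N_0$ in the \emph{tail} regime $d_{i-1} \geq 2\varepsilon$, the expansion $z_n \sim A n^{-1/k}$ yields $|z_i - z_j| \geq (2 - o(1))\, d_{i-1} > 2\varepsilon$ for $j \leq i-2$, so the discs are again disjoint.

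The substantive case is the \emph{nucleus} regime $d_{i-1} < 2\varepsilon$ with $i$ large, where the three discs genuinely overlap. My main tool is the convex-combination identity
\[
|p - (\lambda z_i + (1-\lambda)z_j)|^2 = \lambda |p-z_i|^2 + (1-\lambda)|p-z_j|^2 - \lambda(1-\lambda)|z_i-z_j|^2.
\]
Choosing $\lambda\in[0,1]$ so that the chord point $\lambda z_i + (1-\lambda) z_j$ best approximates $z_{i-1}$, with residual $e := z_{i-1} - \lambda z_i - (1-\lambda) z_j$, a triangle inequality gives $|p - z_{i-1}| \leq \sqrt{\varepsilon^2 - \lambda(1-\lambda)|z_i-z_j|^2} + |e|$, so the bound $|p - z_{i-1}| \leq \varepsilon$ reduces to the quantitative inequality $|e|(2\varepsilon - |e|) \leq \lambda(1-\lambda)|z_i - z_j|^2$. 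The main obstacle I expect is making this last estimate uniform across all $j \leq i-2$ for which the discs overlap, since the admissible range of $j$ can be as wide as $m \sim \varepsilon/d_{i-1}$ steps back into the orbit. Passing to rescaled coordinates $\tilde z_n = z_n/A$ in which $\tilde z_n = n^{-1/k} + O(n^{-2/k})$ is real to leading order, the convexity of $n \mapsto n^{-1/k}$ pins down the parallel component of $e$ while the perpendicular deviation scales like $(m-1)\, i^{-(2k+2)/k}$; the $(m-1)$ factor cancels against the analogous factor in $\lambda(1-\lambda)|z_i-z_j|^2 \sim (m-1)\, d_{i-1}^2$, and a direct comparison then shows $|e|/[\lambda(1-\lambda)|z_i-z_j|^2/(2\varepsilon)] = O(\varepsilon)$ uniformly in $(i,j)$, closing the argument for $\varepsilon$ small enough.
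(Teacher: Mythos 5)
For part (i), your route is the same in spirit as the paper's: derive the asymptotics from the recurrence and compare consecutive distances. However, the stated estimate is too coarse to close the argument as written. From $d_n = |a_1 A^{k+1}|\,n^{-(k+1)/k}\bigl(1 + O(n^{-1/k})\bigr)$ alone, the $O(n^{-1/k})$ relative error has absolute size $O(n^{-(k+2)/k})$, and the difference of two such terms can in principle exceed the claimed leading term $n^{-(2k+1)/k}$ of $d_n - d_{n+1}$ whenever $k>1$ (since $(k+2)/k < (2k+1)/k$). The paper sidesteps this by computing the \emph{second} difference $(z_n - z_{n+1}) - (z_{n+1} - z_{n+2}) = A\tfrac{k+1}{k^2}n^{-(2k+1)/k} + o(\cdot)$ directly from \eqref{znrasy} — where the error is under control because it is recomputed from the recurrence rather than by differencing an asymptotic — and then observes that this second difference makes an acute angle with $z_{n+1} - z_{n+2}$, hence $d_n > d_{n+1}$. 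Your magnitude-comparison argument closes if you phrase it in terms of this second difference rather than by formally differencing the expansion of $d_n$.

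For part (ii), you take a genuinely different route. The paper works with the perpendicular bisectors $s_n$ of the segments $[z_n,z_{n+1}]$: it parametrizes each $s_n$, shows that consecutive bisectors $s_n$, $s_{n+1}$ meet at a distance $\geq M > 0$ from the midpoints $T_n$ uniformly in $n$, and then uses a geometric stripe argument along the attracting direction to extend this lower bound to all pairs $s_n$, $s_{n+k}$, finally taking $\varepsilon < M/4$. You instead reduce the claim to the inequality $|e|(2\varepsilon - |e|) \leq \lambda(1-\lambda)|z_i-z_j|^2$ via the convex-combination identity, and estimate $|e|$ as the perpendicular distance from $z_{i-1}$ to the chord $[z_i,z_j]$. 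Both approaches rest on the same geometric fact — the orbit has uniformly bounded curvature near the attracting direction — but yours is more computational, while the paper's is more synthetic. One caveat: you should verify the estimate across the full admissible range of $m = i-j$, which can be comparable to $i$ deep in the nucleus; there $|z_i - z_j| \approx |A|\,j^{-1/k}$ is no longer $\approx m\,d_{i-1}$ and the Taylor expansion in $m/i$ breaks down. The clean uniform statement is $|e| \sim \tfrac{\kappa}{2}\,d_{i-1}\,|z_i - z_j|$ and $\lambda(1-\lambda)|z_i-z_j|^2 \sim d_{i-1}\,|z_i - z_j|$, with $\kappa$ the (bounded) curvature of the orbit, so that the ratio is $O(1)$ and $\varepsilon$ small enough works independently of $(i,j)$; making that explicit would close the argument.
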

Proposition~\ref{nhood}.$(ii)$ means that all contributions $D_i$ are in crescent or full-disc form, determined only by the distance to the previous point $z_{i-1}$. The positions of the points $z_0,\ldots,z_{i-2}$ do not affect the shape of $D_i$, see Figure~\ref{admiss}.

\begin{proof}
\emph{(i)}
Let us denote by $w_n=z_n-z_{n+1}-(z_{n+1}-z_{n+2})$. Using development \eqref{znrasy}, we compute: $$w_n=A\frac{k+1}{k^2}n^{-\frac{2k+1}{k}}+o(n^{-\frac{2k+1}{k}}),\ \ z_{n+1}-z_{n+2}=\frac{A}{k}n^{-\frac{k+1}{k}}+o(n^{-\frac{k+1}{k}}).$$  Obviously, in the limit as $n\to\infty$, the arguments of $w_n$ and $z_{n+1}-z_{n+2}$ are both equal to $Arg(A)$. For $n$ big enough, the value of the nonordered angle between $z_{n+1}-z_{n+2}$ and $w_n$ is therefore less than $\frac{\pi}{2}$. Since $z_n-z_{n+1}=(z_{n+1}-z_{n+2})+w_n$, it follows that $|z_n-z_{n+1}|>|z_{n+1}-z_{n+2}|$, for $n$ big enough.

\begin{figure}[htp]
\begin{center}
  \includegraphics[width=3.5in]{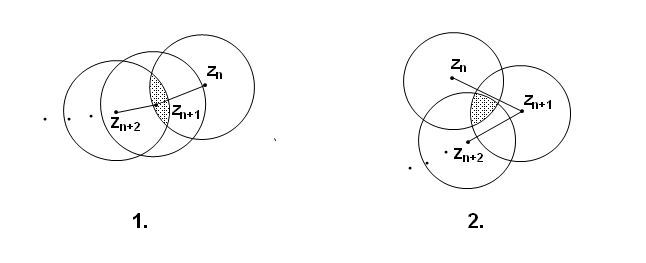}\\
  \caption{{\small 1. Admissible position of discs,\ 2. Nonadmissible position of discs.}}\label{admiss}
  \end{center}
\end{figure}

\emph{(ii)} Let $T_n$ denote the midpoint and $s_n$ the bisector of the segment $[z_{n+1},z_n]$, $n\in\mathbb{N}$. It will suffice to show that there exists $\varepsilon>0$ such that for every $n\geq n_0$ and for every $k\in\mathbb{N}$, the distance from the intersection of $s_n$ and $s_{n+k}$, denoted $S_{n,k}$, to the midpoint $T_n$ is greater than $\varepsilon$. In this way we ensure that the union of intersections of $\varepsilon$-disc of each new point of the orbit with the $\varepsilon$-discs of all the previous points is a subset of the intersection with the $\varepsilon$-disc of the previous point only.

We first show that the two consecutive bisectors $s_n$ and $s_{n+1}$ intersect at the distance from $T_n$ which is bounded from below by a positive constant, as $n\to\infty$. \\
\noindent The bisector $s_n$ can obviously be parametrized as follows
\begin{equation}\label{sim}
T_n+t\cdot i(z_n-z_{n+1})=\frac{z_n+z_{n+1}}{2}+t\cdot i(z_n-z_{n+1}),\ t\in\mathbb{R}.
\end{equation}
We denote by $t_n\in\mathbb{R}$ the parameter of the intersection $S_{n,1}$ of $s_n$ and $s_{n+1}$. The complex number $$\frac{z_n+z_{n+1}}{2}+t_n \cdot i(z_n-z_{n+1})-T_{n+1}=\frac{z_n-z_{n+2}}{2}+t_n\cdot i(z_n-z_{n+1})$$ is perpendicular to $z_{n+1}-z_{n+2}$. Therefore their scalar product, denoted by $(.|.)$, is equal to $0$, and we get:
{\small $$
t_n=-\frac{1}{2}\frac{(z_{n}-z_{n+1}|z_{n+1}-z_{n+2})+|z_{n+1}-z_{n+2}|^2}{Re(z_n-z_{n+1}) Im(z_{n+1}-z_{n+2})-Im(z_n-z_{n+1}) Re(z_{n+1}-z_{n+2})}.
$$}

\noindent Using development \eqref{znrasy},
after some computation, we get that the denominator is $O(n^{-\frac{3k+3}{k}})$, while
the numerator is $\frac{3|A|^2}{k^2}n^{-\frac{2k+2}{k}}+o(n^{-\frac{2k+2}{k}})$.  Therefore, $t_n\geq Cn^{\frac{k+1}{k}}$, for some positive constant $C>0$ and $n>n_0$. Since $|z_n-z_{n+1}|\simeq n^{-\frac{k+1}{k}}$,
the distance $$d(T_n,S_{n,1})=|t_n|\cdot|z_n-z_{n+1}|$$ is bounded from below by some positive constant for $n\geq n_0$, say by $M>0$.

It is left to show that the same lower bound holds not only for consecutive, but for any two bisectors $s_n$ and $s_{n+k}$, $k\in\mathbb{N}$, $n\geq n_0$. We can see from the development \eqref{znrasy} that the points of the orbit approach the origin in the direction $A$.
We draw the stripe of width $M/2$ on both sides of that tangent direction. Obviously, for $n$ big enough, no two bisectors can intersect inside the stripe without two consecutive bisectors being intersected inside the stripe, which is a contradiction with the first part. Therefore, the distances from the midpoints to the intersections of corresponding bisectors when $n\to\infty$ are uniformly bounded from below by e.g. $M/4$.

Taking $\varepsilon<M/4$, we have proven the statement.
\end{proof}

\begin{proposition}\label{crescent}
Let $z,\ w\in \mathbb{C}$ $($or $\mathbb{R}^2$$)$, $\varepsilon>0$. Suppose $|z-w|<2\varepsilon$. Let $D$ denote the crescent $D=K(z,\varepsilon)\backslash K(w,\varepsilon)$. Then its area is equal to
\begin{equation}\label{D}
A(D)=2\varepsilon^2 \left(\frac{|z-w|}{2\varepsilon}\sqrt{1-\frac{|z-w|^2}{4\varepsilon^2}}+\arcsin{\frac{|z-w|}{2\varepsilon}}\right),
\end{equation}
and its center of mass is equal to
\begin{equation}\label{MC}
\mathbf{t}(D)=z+\varepsilon^2(w-z)\frac{\frac{|z-w|}{2\varepsilon}\sqrt{1-\frac{|z-w|^2}{4\varepsilon^2}}-\arcsin\sqrt{1-\frac{|z-w|^2}{4\varepsilon^2}}}{A(D)}.
\end{equation}
\end{proposition}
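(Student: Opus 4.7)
\medskip

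\noindent\emph{Proof proposal for Proposition~\ref{crescent}.}

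The plan is to reduce the statement to a canonical configuration by a rigid motion, compute the area and centroid of the lens-shaped intersection by the standard two-circle formula, and then obtain $A(D)$ and $\mathbf{t}(D)$ by subtracting from the full disc $K(z,\varepsilon)$. Both quantities are invariant under translation and invariant in an equivariant sense under rotation: area is a scalar and the centroid transforms as a point. Hence I would first translate $z$ to the origin and rotate so that $w$ lies on the positive real axis, reducing the problem to the case $z=0$, $w=d$, where I write $d:=|z-w|\in(0,2\varepsilon)$. All that needs to be proved in the canonical position is $A(D)$ and the centroid $\mathbf{t}(D)$, and then translate/rotate back by linearity to recover \eqref{D} and \eqref{MC}.

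For the area, write $L:=K(z,\varepsilon)\cap K(w,\varepsilon)$ for the lens. A direct calculation (integrating the circular segment, or splitting $L$ into two congruent circular segments by the perpendicular bisector of $[z,w]$) yields the classical formula
\[
A(L)=2\varepsilon^{2}\arccos\!\frac{d}{2\varepsilon}-\frac{d}{2}\sqrt{4\varepsilon^{2}-d^{2}}
=2\varepsilon^{2}\Bigl(\arcsin\sqrt{1-\tfrac{d^{2}}{4\varepsilon^{2}}}-\tfrac{d}{2\varepsilon}\sqrt{1-\tfrac{d^{2}}{4\varepsilon^{2}}}\Bigr),
\]
where the second equality uses the identity $\arccos x=\arcsin\sqrt{1-x^{2}}$ valid for $x\in[0,1]$. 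Since $D=K(z,\varepsilon)\setminus L$ up to a set of measure zero, the area of $D$ equals $\pi\varepsilon^{2}-A(L)$, and a short trigonometric simplification (recognising $\pi/2-\arccos t=\arcsin t$) gives exactly \eqref{D}.

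For the centroid, I would use the additivity of first moments: $\mathbf{t}(K(z,\varepsilon))\cdot\pi\varepsilon^{2}=\mathbf{t}(D)\cdot A(D)+\mathbf{t}(L)\cdot A(L)$. The key geometric observation is that $L$ has two orthogonal axes of symmetry — the line through $z$ and $w$ and the perpendicular bisector of $[z,w]$ — so its centroid is precisely the midpoint $(z+w)/2$. Substituting $\mathbf{t}(K(z,\varepsilon))=z$ and $A(L)=\pi\varepsilon^{2}-A(D)$ and solving for $\mathbf{t}(D)\cdot A(D)$ gives
\[
\mathbf{t}(D)\cdot A(D)=z\cdot A(D)-\tfrac{1}{2}(w-z)\,A(L),
\]
and inserting the previously derived expression for $A(L)$ reproduces \eqref{MC} verbatim, after dividing through by $A(D)$.

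The only mildly technical step is the derivation of $A(L)$, which involves an elementary but easy-to-misplace constant when one decomposes the lens into two circular segments and integrates. Everything else — the centroid of the full disc, the two symmetry axes of $L$, and the translation/rotation invariance — is immediate, so I expect the main care to be in the trigonometric bookkeeping ($\arccos$ versus $\arcsin$) rather than in any conceptual obstruction.
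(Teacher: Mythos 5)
Your proposal is correct, and it is noticeably more structured than the paper's own proof, which consists of a single sentence invoking the defining double integrals $A(D)=\iint_D dx\,dy$ and $\mathbf{t}(D)=A(D)^{-1}\bigl(\iint_D x\,dx\,dy + i\iint_D y\,dx\,dy\bigr)$ directly over the crescent. You instead subtract: writing $D=K(z,\varepsilon)\setminus L$ with $L$ the lens $K(z,\varepsilon)\cap K(w,\varepsilon)$, you pull in the classical lens-area formula, pass from $\arccos$ to $\arcsin$, and get $A(D)=\pi\varepsilon^2-A(L)$; for the centroid you use two facts the crescent itself lacks but the lens and disc enjoy — the lens has two orthogonal symmetry axes, so $\mathbf{t}(L)=\tfrac{z+w}{2}$, and $\mathbf{t}(K(z,\varepsilon))=z$ — together with additivity of first moments. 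That cleanly yields $\mathbf{t}(D)\cdot A(D)=z\cdot A(D)-\tfrac12(w-z)A(L)$, and substituting $A(L)$ reproduces \eqref{MC} exactly. The payoff of your route is that all the actual integration happens over sets with obvious symmetry (a disc and a lens), whereas the paper's terse ``proved by integration'' requires parametrising the asymmetric crescent directly, which is exactly where one would misplace a constant. The one step worth stating explicitly in a final write-up is the identity $\arccos x=\arcsin\sqrt{1-x^2}$ (with the sign determination valid for $x\in[0,1]$), which you invoke twice — once to massage $A(L)$ and once, in the form $\tfrac{\pi}{2}-\arcsin\sqrt{1-x^2}=\arcsin x$, to land on the $\arcsin\frac{|z-w|}{2\varepsilon}$ appearing in \eqref{D}.
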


\begin{proof} The proposition is proved by integration, $$A(D)=\int\int_{D} dx\ \!dy,\  \ \mathbf{t}(D)=\frac{1}{A(D)}\left(\int\int_{D} x\ \!dx\ \!dy\ + i\cdot\int\int_{D} y\ \!dx\ \!dy\right).$$
\end{proof}

The following two propositions are auxiliary results in the proof of \linebreak Lemma~\ref{asynucl}.
\begin{proposition}\label{auxsum}
The sum
\begin{equation}\label{ssum}
\sum_{n=n_{\varepsilon}}^{\infty}\left(\frac{d_n}{2\varepsilon}\sqrt{1-\frac{d_n^2}{4\varepsilon^2}}+\arcsin{\frac{d_n}{2\varepsilon}}\right)
\end{equation}
is equal to
$$ \int_{x=n_\varepsilon}^{\infty}\left(\frac{d(x)}{2\varepsilon}\sqrt{1-\frac{d(x)^2}{4\varepsilon^2}}+\arcsin{\frac{d(x)}{2\varepsilon}}\right) dx +O(1),\
$$
as $\varepsilon\to 0$, where $d(x)$ is given by
\begin{equation}\label{dx}
d(x)=q_1x^{-1-\frac{1}{k}}+q_2x^{-1-\frac{2}{k}}+\ldots+q_kx^{-2}+q_{k+1}x^{-2-\frac{1}{k}}\log x+Dx^{-2-\frac{1}{k}}.
\end{equation}
All the coefficients $q_i$ are the same as in development \eqref{dn} of $d_n$ and $D\in\mathbb{R}$ is some constant.
\end{proposition}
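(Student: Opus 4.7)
The plan is to compare the sum to the integral in two steps: first replace $d_n$ by its smooth interpolant $d(n)$ inside the summand, then apply the monotone sum-to-integral bound. Both steps rely on elementary properties of
\[
F(t) = t\sqrt{1-t^2} + \arcsin t,\qquad t\in[0,1],
\]
namely that $F$ is increasing with $F(0)=0$ and $F(1)=\pi/2$, and that $F'(t) = 2\sqrt{1-t^2}$ is bounded by $2$, so $F$ is $2$-Lipschitz on $[0,1]$. Note that $d_n/2\varepsilon \in [0,1)$ for $n\geq n_\varepsilon$ by definition of $n_\varepsilon$, and the same holds for $d(n)/2\varepsilon$ once the matching at order $n^{-2-1/k}$ described below is made.

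For the first step, I would choose the constant $D\in\mathbb{R}$ so that the asymptotic developments of $d_n$ and $d(n)$ agree up through order $n^{-2-1/k}$; by Lemma~\ref{asyneps} and its proof (see \eqref{dn}) this yields $|d_n - d(n)| = o(n^{-2-1/k})$ as $n\to\infty$. The Lipschitz bound on $F$ then gives
\[
\bigl|F(d_n/2\varepsilon) - F(d(n)/2\varepsilon)\bigr| \leq \tfrac{1}{\varepsilon}|d_n - d(n)|.
\]
Summing over $n\geq n_\varepsilon$ and combining with $\sum_{n\geq n_\varepsilon} n^{-2-1/k} = O(n_\varepsilon^{-1-1/k}) = O(\varepsilon)$, where the last equality uses the leading term of \eqref{devneps} (i.e.\ $n_\varepsilon \sim p_1\varepsilon^{-k/(k+1)}$), gives a total substitution error of $o(\varepsilon^{-1}\cdot \varepsilon) = o(1)$.

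For the second step, I would invoke the classical inequality $\bigl|\sum_{n=N}^\infty f(n) - \int_N^\infty f(x)\,dx\bigr|\leq f(N)$, valid for any decreasing nonnegative function $f$ on $[N,\infty)$ with $f(\infty)=0$. Apply it to $f(x) = F(d(x)/2\varepsilon)$ on $[n_\varepsilon,\infty)$: the function $d(x)$ is strictly positive and strictly decreasing there, since $d'(x)$ is dominated by its leading term $-q_1(1+1/k)x^{-2-1/k}$ with $q_1 = |a_1 A^{k+1}|>0$ (the logarithmic term $q_{k+1}x^{-2-1/k}\log x$ being of strictly lower order), and $F$ is increasing on $[0,1]$. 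Since $d(x)/2\varepsilon\in[0,1)$ throughout, the sum-integral difference is bounded by $F(d(n_\varepsilon)/2\varepsilon)\leq \pi/2 = O(1)$. Adding this to the $o(1)$ substitution error from the first step proves the proposition.

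The main obstacle I anticipate is verifying global strict monotonicity of $d(x)$ on $[n_\varepsilon,\infty)$ uniformly in small $\varepsilon$: although the leading-term dominance is morally clear, the $\log x$ factor in the $(k{+}1)$-st term demands a careful bound on $d'(x)$ to rule out a non-monotone transient near $x = n_\varepsilon$. If this proves delicate, the sum can be split at a slightly larger threshold $x_0$ beyond which monotonicity is unambiguous; the finitely many excluded terms contribute only $O(1)$ to both the sum and the integral, which is absorbed into the error term.
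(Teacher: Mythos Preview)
Your two-step strategy (Lipschitz replacement followed by monotone sum--integral comparison) is exactly the paper's argument: the paper uses the mean value theorem where you use the Lipschitz bound, and then the same integral approximation via monotonicity. The error bookkeeping is also identical.

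There is one genuine slip in your choice of $D$. You propose to pick $D$ so that the expansions of $d_n$ and $d(n)$ agree \emph{through} order $n^{-2-1/k}$, concluding $|d_n-d(n)|=o(n^{-2-1/k})$. Two problems: first, the development \eqref{dn} only gives a remainder $o(n^{-2-1/k}\log n)$ after the $n^{-2-1/k}\log n$ term, so there is no guarantee that $d_n$ has a well-defined coefficient at order $n^{-2-1/k}$ to match. Second, and more importantly, even if such matching were possible it would \emph{not} ensure $d(n)/2\varepsilon\in[0,1)$ for $n\ge n_\varepsilon$: at $n=n_\varepsilon$ we have $d_{n_\varepsilon}$ within $O(\varepsilon^{2-1/(k+1)})$ of $2\varepsilon$, so an $o(n^{-2-1/k})$ perturbation could push $d(n_\varepsilon)$ above $2\varepsilon$, making $F(d(n_\varepsilon)/2\varepsilon)$ undefined. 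The paper's fix is to choose $D$ negative and large in absolute value, forcing $d(n)<d_n<2\varepsilon$ for all $n\ge n_\varepsilon$; the price is that $|d_n-d(n)|$ is only $O(n^{-2-1/k})$ rather than $o$, but your own summation estimate then yields $O(1)$ rather than $o(1)$, which is precisely the claimed error. With this adjustment your argument is complete.
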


\begin{proof}
The idea is to apply integral approximation of the sum. The problem is that we only have formal asymptotic development of $d_n$. The idea is to cut off the formal asymptotic development at $(k+1)$-st term, to get a continuous and decreasing function of $n$ under the summation sign. We show here that the cut-off remainder is in some sense small and contributes to the sum with no more than $O(1)$, as $\varepsilon\to 0$.

We denote by $J_{k+1}d_n$ the first $k+1$ terms in the asymptotic expansion~\eqref{dn}.
For the sum with truncated $d_n$,
\begin{equation}
\sum_{n=n_{\varepsilon}}^{\infty}\left(\frac{J_{k+1} d_n}{2\varepsilon}\sqrt{1-\frac{(J_{k+1}d_n)^2}{4\varepsilon^2}}+\arcsin{\frac{J_{k+1}d_n}{2\varepsilon}}\right),
\end{equation}
to be well-defined, we have to ensure that $0<J_{k+1}d_n<2\varepsilon$ for $n\geq n_\varepsilon$. Since $d_n<2\varepsilon$ for $n\geq n_\varepsilon$ by \eqref{neps}, it is enough to achieve that $J_{k+1}d_n<d_n$, for $n\geq n_\varepsilon$, where $\varepsilon$ is sufficiently small. This is obtained by adding the term $Dn^{-2-\frac{1}{k}}$ to $J_{k+1}d_n$. Here, $D$ is chosen negative and sufficiently big by absolute value. We denote $d_n^*=J_{k+1}d_n+Dn^{-2-\frac{1}{k}}$. Obviously,
\begin{equation}\label{manip1}
d_n=d_n^*+O(n^{-2-\frac{1}{k}}).
\end{equation}
Let us denote the function under the summation sign in \eqref{ssum} by $h(x)$: $$h(x)=\frac{x}{2\varepsilon}\sqrt{1-\frac{x^2}{4\varepsilon^2}}+\arcsin{\frac{x}{2\varepsilon}}.$$ Then, $h'(x)=\frac{1}{\varepsilon}\sqrt{1-\big(\frac{x}{2\varepsilon}\big)^2}$. By \eqref{manip1} and by the mean value theorem, \begin{equation}\label{mvt}h(d_n)=h(d_n^*)+h'(\xi_n)\cdot O(n^{-2-\frac{1}{k}}),\ \xi_n\in[d_n^*,d_n].\end{equation}
Furthermore, \begin{equation}\label{h}
0<h'(\xi_n)<\frac{1}{\varepsilon}, \ n\geq n_\varepsilon.
\end{equation}
The initial sum \eqref{ssum} can, by \eqref{mvt}, be evaluated as follows:
\begin{equation}\label{ssum1}
S=\sum_{n=n_\varepsilon}^{\infty}h(d_n)=\sum_{n=n_\varepsilon}^\infty h(d_n^*)+\sum_{n=n_\varepsilon}^{\infty}h'(\xi_n)O(n^{-2-\frac{1}{k}}).
\end{equation}
By \eqref{h} and Lemma~\ref{asyneps}, using integral approximation of the sum, we get
\begin{equation}\label{st1}
|\sum_{n=n_\varepsilon}^{\infty}h'(\xi_n)
\cdot O(n^{-2-\frac{1}{k}})|<\frac{C_1}{\varepsilon}\sum_{n=n_\varepsilon}^{\infty}n^{-2-\frac{1}{k}}<\frac{C_2}{\varepsilon}n_\varepsilon^{-1-\frac{1}{k}}<C,
\end{equation}
for some constant $C>0$, as $\varepsilon\to 0$.

\noindent Furthermore, using integral approximation of the sum and the fact that the subintegral function is bounded from above, we get
\begin{equation}\label{st2}
\sum_{n=n_\varepsilon}^\infty h(d_n^*)=\int_{x=n_\varepsilon}^{\infty}\left(\frac{d(x)}{2\varepsilon}\sqrt{1-\frac{d(x)^2}{4\varepsilon^2}}+\arcsin{\frac{d(x)}{2\varepsilon}}\right) dx+O(1),\ \varepsilon\to 0.
\end{equation}
Finally, by \eqref{ssum1}, \eqref{st1} and \eqref{st2}, the result follows.
\end{proof}

\begin{proposition}\label{auxi2}
The integral
$$
\int_{0}^{\frac{d(n_\varepsilon)}{2\varepsilon}}\left(t \sqrt{1-t^2}+\arcsin{t}\right) \frac{1}{d'(x(t))} dt,
$$
where $d(x),\ x(t)$ and $n_\varepsilon$ are as in Lemma~\ref{asynucl}, is equal to the integral
$$
\int_{0}^{1}\left(t \sqrt{1-t^2}+\arcsin{t}\right) \frac{1}{d'(x(t))} dt+o(\varepsilon^{-1}),
$$
as $\varepsilon\to 0$.
\end{proposition}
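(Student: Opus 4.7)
Set $h(s) = s\sqrt{1-s^2} + \arcsin s$, $t_\varepsilon = d(n_\varepsilon)/(2\varepsilon)$, and denote the error to estimate by
\[
\Delta(\varepsilon) := \int_{t_\varepsilon}^{1} \frac{h(t)}{d'(x(t))}\,dt.
\]
My plan is to invert the substitution used in Lemma~\ref{asynucl}: setting $u = x(t)$, i.e.\ $t = d(u)/(2\varepsilon)$ with $dt = d'(u)\,du/(2\varepsilon)$, and pulling back the bounds to $u = n_\varepsilon$ (at $t = t_\varepsilon$) and $u = n^* := d^{-1}(2\varepsilon)$ (at $t=1$), the nearly-singular factor $1/d'(x(t))$ cancels and one obtains
\[
\Delta(\varepsilon) = -\frac{1}{2\varepsilon}\int_{n^*}^{n_\varepsilon} h\!\left(\frac{d(u)}{2\varepsilon}\right) du,
\]
which replaces the original integrand by a bounded one on the short $u$-interval $[n^*, n_\varepsilon]$ (note $n^* \leq n_\varepsilon$ since $d$ is strictly decreasing by Proposition~\ref{nhood}).

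Next I would quantify two asymptotic inputs. From Lemma~\ref{asyneps}, the one-sided inequalities $d_{n_\varepsilon} < 2\varepsilon \leq d_{n_\varepsilon-1}$, and the relation $d(n) - d_n = -D n^{-2-1/k} + o(n^{-2-1/k}\log n)$ built into \eqref{dx}, one derives $1 - t_\varepsilon = O(\varepsilon^{k/(k+1)})$ and (crudely) $n_\varepsilon - n^* = O(1)$. Near $s = 1$, $h'(s) = 2\sqrt{1-s^2}$ gives $h(1) = \pi/2$, $h'(1) = 0$, and
\[
h(s) = \tfrac{\pi}{2} - \tfrac{4\sqrt 2}{3}(1-s)^{3/2} + O\bigl((1-s)^{5/2}\bigr).
\]
Splitting $\Delta = \Delta_1 + R$ according to the constant $\pi/2$ and the $(1-s)^{3/2}$ correction, one gets $\Delta_1 = -\pi(n_\varepsilon - n^*)/(4\varepsilon)$, while using $1 - d(u)/(2\varepsilon) = O(\varepsilon^{k/(k+1)})$ uniformly on $[n^*, n_\varepsilon]$ yields $R = O\bigl(\varepsilon^{-1}(n_\varepsilon - n^*)\cdot\varepsilon^{3k/(2k+2)}\bigr) = O(\varepsilon^{(k-2)/(2k+2)}) = o(\varepsilon^{-1})$ for every $k \geq 1$.

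The main obstacle is therefore the leading piece $\Delta_1$: upgrading $\Delta_1 = O(\varepsilon^{-1})$ to $\Delta_1 = o(\varepsilon^{-1})$ amounts to proving $n_\varepsilon - n^* \to 0$ as $\varepsilon \to 0$, which is delicate because $n_\varepsilon$ is integer-valued while $n^*$ moves continuously with $\varepsilon$, so a priori only $|n_\varepsilon - n^*| \leq 1$ is available. My plan is to exploit the flexibility in the constant $D$ of \eqref{dx}: Proposition~\ref{auxsum} only fixes the first $k+1$ Taylor coefficients of $d(x)$, leaving $D$ adjustable. Choosing $D$ so that the matching $d(n) - d_n$ is controlled to the next subleading scale beyond $n^{-2-1/k}$, and then applying the Taylor expansion $d(n_\varepsilon) - 2\varepsilon = d'(n^*)(n_\varepsilon - n^*) + O\bigl((n_\varepsilon - n^*)^2 d''(n^*)\bigr)$ against the two-sided discrete inequality on $d_{n_\varepsilon}$, the discretization gap collapses to $n_\varepsilon - n^* = o(1)$. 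This calibration of the continuous interpolant $d(\cdot)$ against the integer index $n_\varepsilon$ — rather than any off-the-shelf estimate — is the heart of the argument and the single step that distinguishes the asserted $o(\varepsilon^{-1})$ bound from the easy $O(\varepsilon^{-1})$ one.
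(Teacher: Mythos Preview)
Your back-substitution to the $u$-variable is correct and immediately gives $|\Delta(\varepsilon)| \le \frac{\pi}{4\varepsilon}(n_\varepsilon - n^*) = O(\varepsilon^{-1})$. That is in fact all the paper proves, and all that is needed: in Lemma~\ref{asynucl} the error is multiplied by $2\varepsilon$, producing $O(1) = o(\log\varepsilon)$, which is exactly the remainder in \eqref{inte2}. The paper's own argument is more direct than yours: it shows $t_\varepsilon = d(n_\varepsilon)/(2\varepsilon) = 1 + O(\varepsilon^{1-\frac{1}{k+1}})$ from \eqref{devneps}, \eqref{manip1}, \eqref{neps}, and then bounds the integral over $[t_\varepsilon,1]$ by (length)$\times$(sup of integrand), using $1/d'(x(t)) = O((\varepsilon t)^{-2+\frac{1}{k+1}})$ from \eqref{ratio} and the boundedness of $h$ near $t=1$. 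This yields $O(\varepsilon^{1-\frac{1}{k+1}})\cdot O(\varepsilon^{-2+\frac{1}{k+1}}) = O(\varepsilon^{-1})$. Despite the $o(\varepsilon^{-1})$ in the proposition's statement, the proof in the paper stops at $O(\varepsilon^{-1})$.

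Where your proposal goes wrong is the attempt to upgrade $\Delta_1$ to $o(\varepsilon^{-1})$ via $n_\varepsilon - n^* = o(1)$. This cannot be achieved by any fixed choice of $D$ in \eqref{dx}. Once $D$ is fixed, $n^*(\varepsilon) = d^{-1}(2\varepsilon)$ is a continuous, strictly increasing function of $1/\varepsilon$, while $n_\varepsilon$ is integer-valued and jumps by $+1$ at each threshold $\varepsilon = d_{N}/2$. Between consecutive thresholds $n_\varepsilon$ is constant (say $=N$) while $n^*$ increases continuously by approximately $1$ (since $d^{-1}(d_N) - d^{-1}(d_{N-1}) \approx (d_N - d_{N-1})/d'(N) \approx 1$). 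Hence $n_\varepsilon - n^*$ oscillates with amplitude essentially $1$ as $\varepsilon \to 0$, and no single constant $D$ can suppress this discretization gap. Your ``calibration'' step is not a matter of matching one more subleading term; the obstruction is structural.

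In short: your route and the paper's both deliver $O(\varepsilon^{-1})$, which suffices; the stated $o(\varepsilon^{-1})$ is stronger than what is proved or required, and your proposed mechanism for obtaining it does not work.
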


\begin{proof}
We first show that the upper boundary $\frac{d(n_\varepsilon)}{2\varepsilon}$ in the integral is equal to
\begin{equation}\label{lower}
\frac{d(n_\varepsilon)}{2\varepsilon}=1+O(\varepsilon^{1-\frac{1}{k+1}}),\ \varepsilon\to 0.
\end{equation}
By \eqref{devneps} and \eqref{manip1},
\begin{equation}\label{n1}
d(n_\varepsilon)=d_{n_\varepsilon}^*=d_{n_\varepsilon}+O(\varepsilon^{2-\frac{1}{k+1}}).
\end{equation}
From \eqref{dn}, it can easily be seen that $d_{n+1}=d_n+O(n^{-2-\frac{1}{k}})$, thus by \eqref{devneps} and \eqref{neps}, we get
\begin{equation}\label{n2}
d_{n_\varepsilon}=2\varepsilon+O(\varepsilon^{2-\frac{1}{k+1}}).
\end{equation}
Combining \eqref{n1} and \eqref{n2}, \eqref{lower} follows.

Using \eqref{lower}, the above integral $I=\int_{0}^{\frac{d(n_\varepsilon)}{2\varepsilon}}\left(t \sqrt{1-t^2}+\arcsin{t}\right) \frac{1}{d'(x(t))} dt$ can be written as the sum
\begin{equation*}
I=\int_{0}^{1}(t\sqrt{1-t^2}+\arcsin{t})\frac{1}{d'(x(t))} dt+\int_{1}^{1+O(\varepsilon^{1-\frac{1}{k+1}})}(t\sqrt{1-t^2}+\arcsin{t})\frac{1}{d'(x(t))} dt.
\end{equation*}
By \eqref{ratio}, $\frac{1}{d'(x(t))}=O((\varepsilon t)^{-2+\frac{1}{k+1}})$. It is then easy to see that the second integral equals $O(\varepsilon^{-1})$, as $\varepsilon\to 0$, due to the boundedness of the subintegral function in the neighborhood of $t=1$.
\end{proof}

\section*{Acknowledgments}
I would like to thank my supervisors, Pavao Marde\v si\' c, for proposing the subject and for useful suggestions, and Vesna \v Zupanovi\' c, for many useful suggestions and discussions.

\medskip
\quad
\medskip

\begin{thebibliography}{99}

\bibitem{bruijn} 
\newblock N. G. de Bruijn,
\newblock ``Asymptotic Methods in Analysis,"
\newblock North-Holland Publishing Co., Amsterdam, 1958.

\bibitem{ecalle} 
\newblock J. Ecalle,
\newblock ``Les Fonctions R\' esurgentes. Tome III,"
\newblock Publications Math\' ematiques d'Orsay, \textbf{85}, Universit\' e de Paris-Sud, D\' epartement de Mathematiques, Orsay, 1985.

\bibitem{falconer} 
\newblock K. Falconer,
\newblock ``Fractal Geometry: Mathematical Foundations and Applications,"
\newblock John Wiley and Sons Ltd., Chichester, 1990.

\bibitem{ilya} 
\newblock Y. Ilyashenko and S. Yakovenko,
\newblock ``Lectures on Analytic Differential Equations, Graduate Studies in Mathematics,"
\newblock \textbf{86} American Mathematical Society, Providence, RI, 2008, xiv+625.

\bibitem{klu} 
\newblock I. Kluvanek and G. Knowles,
\newblock ``Vector Measures and Control Systems,"
\newblock North-Holland Mathematics Studies 20, Amsterdam, 1976.

\bibitem{lapid} 
\newblock M. L. Lapidus,
\newblock \emph{Spectral and fractal geometry: From the Weyl-Berry conjecture for the vibrations of fractal drums to the Riemann zeta-function},
\newblock Differential Equations and Mathematical Physics (Birmingham, AL, 1990), Math. Sci. Engrg., \textbf{186} (1992), Academic Press, Boston, 151--181.

\bibitem{lapi} 
\newblock M. L. Lapidus and C. Pomerance,
\newblock \emph{The Riemann zeta-function and the one-dimensional Weyl-Berry conjecture for fractal drums},
\newblock Proceedings of the London Mathematical Society (3), \textbf{66} (1993), 41--69.

\bibitem{loray}
\newblock F. Loray,
\newblock ``Pseudo-Groupe D'une Singularit\'e de Feuilletage Holomorphe en Dimension Deux,"
\newblock Pr\' epublication IRMAR, ccsd-00016434, 2005.

\bibitem{MaReZu} 
\newblock P. Marde\v si\' c, M. Resman and V. \v Zupanovi\' c,
\newblock \emph{Multiplicity of fixed points and growth of $\varepsilon$-neighborhoods of orbits},
\newblock J. Differential Equations, \textbf{253} (2012), 2493--2514.

\bibitem{milnor} 
\newblock J. Milnor,
\newblock ``Dynamics in One Complex Variable, Introductory Lectures,"
\newblock 2$^{nd}$ edition, Friedr. Vieweg. \& Sohn Verlagsgesellschaft mbH, Braunschweig/Wiesbaden, 1999.

\bibitem{PT} 
\newblock J. Palis and F. Takens,
\newblock ``Hyperbolicity and Sensitive Chaotic Dynamics at Homoclinic Bifurcations,"
\newblock Cambridge University Press, 1993.

\bibitem{pratap}
\newblock R. Pratap and A. Ruina,
\newblock ``Introduction to Statistics and Dynamics,"
\newblock Pre-print for Oxford University Press, 2001.

\bibitem{tricot} 
\newblock C. Tricot,
\newblock ``Curves and Fractal Dimension,"
\newblock Springer-Verlag, Paris, 1993.

\bibitem{overview}
\newblock V. \v Zupanovi\' c and \ D. \v Zubrini\' c,
\newblock \emph{Fractal dimensions in dynamics},
\newblock in ``Encyclopedia of Mathematical Physics" \textbf{2} (2006), Elsevier, Oxford, 394--402.

\bibitem{voronin} 
\newblock S. M. Voronin,
\newblock \emph{Analytic classification of germs of conformal mappings $(\mathbb{C},0)\to(\mathbb{C},0)$},
\newblock Functional Anal. Appl., \textbf{15}(1981), 1--13.


\end{thebibliography}
\end{document}